\numberwithin{equation}{section}
\numberwithin{figure}{section}
\theoremstyle{plain}
\newtheorem{thm}{\protect\theoremname}[section]
\theoremstyle{plain}
\newtheorem{prop}[thm]{\protect\propositionname}
\newenvironment{proof}[1][\protect\proofname]{\par
\normalfont\topsep6\p@\@plus6\p@\relax
\trivlist
\itemindent\parindent
\item[\hskip\labelsep
\scshape
#1]\ignorespaces
}{%
\endtrivlist\@endpefalse
}
\providecommand{\proofname}{Proof}
\theoremstyle{plain}
\newtheorem{lem}[thm]{\protect\lemmaname}
\providecommand{\lemmaname}{Lemma}
\providecommand{\propositionname}{Proposition}
\providecommand{\theoremname}{Theorem}
\begin{document}

\title{Asymptotics of the eta invariant}

\author{nikhil savale}
\begin{abstract}
We prove an asymptotic bound on the eta invariant of a family of coupled
Dirac operators on an odd dimensional manifold. In the case when the
manifold is the unit circle bundle of a positive line bundle over
a complex manifold, we obtain precise formulas for the eta invariant.
\end{abstract}

\address{Université Paris-Sud, Département de Mathématique, F-91405 Orsay
FRANCE }

\email{nikhil.savale@math.u-psud.fr }

\thanks{The research leading to the results contained in this paper has received
funding from the European Research Council (E.R.C.) under European
Union's Seventh Framework Program (FP7/2007-2013)/ ERC grant agreement
No. 291060.}

\maketitle
\global\long\def\norm#1{\lVert#1\rVert}

\section{Introduction}

The eta invariant was introduced by Atiyah, Patodi and Singer in \cite{APSI}
as a correction term to an index theorem for manifolds with boundary.
Consider a first order, elliptic and self-adjoint operator $A$ on
a compact manifold. Formally, the eta invariant $\eta(A)$ of this
operator can be interpreted as its signature, or the difference between
the number of positive and the number of negative eigenvalues of $A$.
In reality, since $A$ has infinitely many eigenvalues of each sign
this needs to be defined via regularization (see \prettyref{sec:Preliminaries}).

A key feature of the invariant $\eta(A)$, much like the signature
of a matrix, is that it is $\mathit{not}$ in general a continuous
function of the operator $A$. In particular consider a smooth one-parameter
family of operators $A_{t}$. The corresponding eta invariant $\eta(A_{t})$
is then in general a discontinuous function of the parameter $t$,
making it difficult to understand how it behaves as $t$ varies. In
this paper we shall investigate how the eta invariant of such a one
parameter family behaves asymptotically as the parameter gets large.

More precisely, consider a compact, oriented Riemannian manifold $(Y,g^{TY})$
of odd dimension $n=2m+1$, equipped with a spin structure. Let $S$
be the corresponding spin bundle on $Y$. Let $L$ be a Hermitian
line bundle on $Y$. Let $A_{0}$ be a fixed unitary connection on
$L$ and let $a\in\Omega^{1}(Y;i\mathbb{R})$ be an imaginary one
form on $Y$. This gives a family $A_{r}=A_{0}+ra$ of unitary connections
on $L$, with $r\in\mathbb{R}$ being a real parameter . Each connection
in this family gives a coupled Dirac operator $D_{A_{r}}$ acting
on sections of $S\otimes L$. Our first result, regarding the asymptotics
of the reduced eta invariant $\bar{\eta}^{r}=\bar{\eta}(D_{A_{r}})$,
is the following.
\begin{thm}
\label{thm:main theorem}As $r\rightarrow\infty$, the reduced eta
invariant satisfies the asymptotics 
\begin{eqnarray}
\bar{\eta}^{r} & = & o(r^{\frac{n}{2}}).\label{eq:main estimate little o}
\end{eqnarray}

\end{thm}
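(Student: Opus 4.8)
The plan is to estimate $\bar\eta^{r}$ via the heat-kernel representation of the eta function together with the semiclassical structure of $D_{A_{r}}$ as $r\to\infty$. Since $n$ is odd one has $\mathrm{Tr}\bigl(D_{A_{r}}e^{-tD_{A_{r}}^{2}}\bigr)=O(\sqrt{t})$ as $t\to0$ (Bismut--Freed, the would-be leading heat coefficients of the trace of an odd-order operator vanishing on an odd-dimensional manifold), so one has the convergent formula $\bar\eta^{r}=\tfrac{1}{2\sqrt\pi}\int_{0}^{\infty}t^{-1/2}\,\mathrm{Tr}\bigl(D_{A_{r}}e^{-tD_{A_{r}}^{2}}\bigr)\,dt+\tfrac12\dim\ker D_{A_{r}}$. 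The kernel term is harmless: a semiclassical upper bound on the number of eigenvalues of $D_{A_{r}}$ in a fixed window --- such eigenfunctions being confined by the potential $r^{2}|a|^{2}$ in the Lichnerowicz-type formula $D_{A_{r}}^{2}=\nabla^{*}\nabla+r^{2}|a|^{2}+(\text{terms of order }\le r)$ to an $r^{-1/2}$-neighbourhood of the degeneracy locus $Z=\{a=0\}$, where the effective operator has dimension $<n$ --- gives $\dim\ker D_{A_{r}}=o(r^{n/2})$. Thus it suffices to bound $I(r):=\int_{0}^{\infty}t^{-1/2}\,\mathrm{Tr}\bigl(D_{A_{r}}e^{-tD_{A_{r}}^{2}}\bigr)\,dt$. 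The naive Weyl heuristic --- eigenvalue count $\sim r^{n}$ in bands, eigenvalues up to size $\sim r$ --- only predicts $I(r)=O(r^{(n+1)/2})$, so the theorem is really a cancellation statement, gaining a power of $r$ and an $\varepsilon$.

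I would split $I(r)=\int_{0}^{1/r}+\int_{1/r}^{1}+\int_{1}^{\infty}$ (the cut-offs are adjustable). On the short-time range $[0,1/r]$, rescale $t=s/r$ and run the Getzler rescaling of the heat kernel of $D_{A_{r}}^{2}$ adapted to the large parameter $r$ (which enters through $F_{A_{r}}=F_{A_{0}}+r\,da$); the leading term of the resulting expansion of $\mathrm{Tr}\bigl(D_{A_{r}}e^{-tD_{A_{r}}^{2}}\bigr)$ is the fibrewise integral of $\mathrm{tr}\bigl(\sigma(D_{A_{r}})\,e^{-t\,\sigma(D_{A_{r}})^{2}}\bigr)$, and since $\sigma(D_{A_{r}})(x,\xi)$ is Clifford multiplication by a covector --- an odd element of $\mathrm{Cl}(T_{x}^{*}Y)$ --- while $e^{-t\sigma^{2}}$ is a scalar multiple of the identity, this trace vanishes identically; the short-time contribution is thus governed by subleading heat coefficients carrying strictly fewer powers of $r$, and $\int_{0}^{1/r}t^{-1/2}\,\mathrm{Tr}(D_{A_{r}}e^{-tD_{A_{r}}^{2}})\,dt=o(r^{n/2})$. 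On the long-time range $[1,\infty)$ only a bounded window of eigenvalues contributes, so the signed sum $\mathrm{Tr}\bigl(D_{A_{r}}e^{-tD_{A_{r}}^{2}}\bigr)=\sum_{\lambda}\lambda e^{-t\lambda^{2}}$ together with the window bound of the previous paragraph yields $\int_{1}^{\infty}t^{-1/2}\,\mathrm{Tr}(D_{A_{r}}e^{-tD_{A_{r}}^{2}})\,dt=o(r^{n/2})$.

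The intermediate range $[1/r,1]$ is the crux: here a window of eigenvalues of size up to $\sim\sqrt{r}$ contributes, so the naive bound is once again $O(r^{(n+1)/2})$, and one must exhibit the sign cancellation in $\sum_{\lambda}\lambda e^{-t\lambda^{2}}$ directly --- by combining the local index-theoretic picture with the microlocal/propagation structure of $D_{A_{r}}$ --- and do so uniformly in $r$. The principal obstacle is the locus $Z=\{a=0\}$, where $D_{A_{r}}$ fails to be semiclassically elliptic: after rescaling the normal directions by $r^{-1/2}$ one replaces $D_{A_{r}}$ near $Z$ by a model operator, a harmonic oscillator of frequency $\sim r$ in the normal directions twisted by a coupled Dirac operator along $Z$, and must show that its heat trace $\mathrm{Tr}\bigl(D^{\mathrm{mod}}e^{-t(D^{\mathrm{mod}})^{2}}\bigr)$ again obeys the $o(r^{n/2})$ bound --- once more through the vanishing of the odd-Clifford symbol trace, now for the effective operator on $Z$ (itself a coupled Dirac operator in a large parameter on a manifold of lower dimension) --- while the approximation error and the contribution of the transition region between $Z$ and its complement are of lower order. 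Once this uniform, effective cancellation is established, reassembling the three time ranges gives $I(r)=o(r^{n/2})$, hence (\ref{eq:main estimate little o}).
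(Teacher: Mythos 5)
Your short-time analysis (rescale $t$ by $1/r$, run a Getzler/Mehler-type rescaling, and exploit a parity cancellation in the heat coefficients to gain a half power of $r$) is in the spirit of the paper's Propositions~\ref{prop:Heat Mehler comparison}--\ref{prop:trace Dexp(td2) estimate}. The gap is in everything else. You ground the kernel bound, the intermediate-time range $[1/r,1]$, and implicitly the long-time range on concentration of low eigenfunctions near a degeneracy locus $Z=\{a=0\}$, forced by a coercive potential $r^{2}|a|^{2}$ in a claimed Lichnerowicz formula. No such term appears: the Weitzenbock formula used in Proposition~\ref{prop:Maximum Principle estimate} is $D_{A_{r}}^{2}=\nabla^{r*}\nabla^{r}+c(F_{A_{0}})+r\,c(da)+\tfrac{\kappa}{4}$; the only $r$-dependent endomorphism is $r\,c(da)$, which is of size $O(r)$ and indefinite, and the statement of the theorem makes no hypothesis whatsoever on the zero set of $a$. (In the motivating application, Taubes's contact form, $a$ is nowhere vanishing.) There is no confinement mechanism of the kind you invoke, so the claimed $o(r^{n/2})$ bound on $\dim\ker D_{A_{r}}$ and the model-operator reduction on $[1/r,1]$ both rest on a false premise, and the long-time bound, which leans on the same ``window bound,'' collapses with them.

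The paper handles precisely this portion differently. It rescales $D_{r}=r^{-1/2}D_{A_{r}}$, writes the kernel term together with the long-time tail as $\tfrac12\mathrm{tr}\,E(D_{r})$ with $E$ the odd erfc-type function, and introduces the rescaled spectral measure $\mu_{r}=r^{-n/2}\sum_{\lambda\in\mathrm{Spec}(D_{r})}\delta_{\lambda^{2}}$. The key inputs are then: (i) weak convergence $\mu_{r}\rightharpoonup\mu_{\infty}$, where $\mu_{\infty}$ is built fibrewise from the inverse Laplace transform of the Mehler heat trace; (ii) the uniform small-ball estimate $\mu_{\infty}([0,\epsilon])\leq C\sqrt{\epsilon}$, asserting that the limit spectral density vanishes at zero without any nondegeneracy assumption on $a$ or $da$; and (iii) the odd-trace cancellation $r^{-n/2}\mathrm{tr}\,\varphi(D_{r})\to 0$ for odd Schwartz $\varphi$. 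These, together with the functional trace bound $\mathrm{tr}\,\varphi(D_{r})\leq C r^{n/2}\|\langle x\rangle^{n+2}\varphi\|_{C^{0}}$, squeeze $\mathrm{tr}\,E(D_{r})$ between an odd Schwartz approximant and an even cutoff near zero to get $o(r^{n/2})$. This quantitative, hypothesis-free control of the low spectrum is exactly what your plan lacks; without a replacement for it, the intermediate-time and kernel contributions cannot be closed.
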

It is an interesting question as to what extent the little $o(r^{\frac{n}{2}})$
estimate of \prettyref{thm:main theorem} can be improved. 

In order to investigate this question we consider the eta invariant
of such a family in the case where $Y$ is the total space of a circle
bundle. In particular, we shall let $Y$ be the space of unit elements
of a positive line bundle $\mathcal{L}\rightarrow X$ over a complex
manifold $X$. We shall further equip $Y$ with an adiabatic family
of metrics $g_{\varepsilon}^{TY}$ (see \prettyref{sec:Eta-invariant-of}).
Under an appropriate choice of the family of connections, this gives
the corresponding eta invariant $\bar{\eta}^{r,\varepsilon}$, with
now an additional dependence on the adiabatic parameter $\varepsilon$.
Letting $\hat{A}(X)$ denote the $\hat{A}$-genus of $X$, we now
prove the following more precise formula for the eta invariant (see
theorem \prettyref{thm:Eta invrian asmp S1 bundle})
\begin{thm}
The eta invariant $\bar{\eta}^{r,\varepsilon}$ satisfies the asymptotics
\begin{align*}
\bar{\eta}^{r,\varepsilon}= & \sum_{a=0}^{m}\left\{ \left(\frac{r^{a+1}}{(a+1)!}-\sum_{k=1}^{\left[r+\frac{\varepsilon m}{2}\right]}\frac{k^{a}}{a!}\right)\int_{X}c_{1}(\mathcal{L})^{a}\left[\hat{A}(X)\right]^{m-a}\right\} +O(1).
\end{align*}
as $r\rightarrow\infty$.
\end{thm}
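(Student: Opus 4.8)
The plan is to track the change of $\bar\eta^{r,\varepsilon}$ along the path of connections $s\mapsto A_{s}=A_{0}+sa$, $s\in[0,r]$, by means of the Atiyah--Patodi--Singer variation formula for the reduced eta invariant. Since only the connection varies while the metric $g_{\varepsilon}^{TY}$ is kept fixed, this gives
\[
\bar\eta^{r,\varepsilon}=\bar\eta^{0,\varepsilon}+\int_{[0,r]\times Y}\hat{A}(Y)\,\mathrm{ch}(L,\tilde{A})-\mathrm{sf}\bigl(\{D_{A_{s}}\}_{s\in[0,r]}\bigr),
\]
where $\tilde{A}=A_{0}+sa$ is regarded as a connection on the pullback of $L$ to $[0,r]\times Y$, the overall sign being pinned down by the model case $X=\mathrm{pt}$, and $\mathrm{sf}$ denotes the spectral flow of the family. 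As $\bar\eta^{0,\varepsilon}$ is a fixed real number it is $O(1)$, so the theorem reduces to evaluating the transgression integral and the spectral flow up to $O(1)$.

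For the transgression integral I would use the circle-bundle geometry of $\pi\colon Y\to X$. For the choice of $a$ in the statement, $a$ is a constant multiple of the connection one-form $\alpha$ of the $S^{1}$-bundle, so the curvature of $\tilde A$ is $s\,da+ds\wedge a$ with $\tfrac{i}{2\pi}\,da$ a Chern--Weil representative of $\pi^{*}c_{1}(\mathcal L)$. Only the $ds$-component of $\mathrm{ch}(L,\tilde A)$ survives fibre integration over $[0,r]$, and fibre integration along $\pi$ of $\pi^{*}\beta\wedge\alpha$ is a constant multiple of $\beta$; moreover $\hat A(Y)=\pi^{*}\hat A(X)$ modulo terms that either carry a factor of $\alpha$ — hence die upon wedging with $a$ — or are of order $\varepsilon$. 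Carrying out the fibre integration then reduces the transgression to
\[
\int_{0}^{r}\Bigl(\int_{X}\hat A(X)\,e^{sc_{1}(\mathcal L)}\Bigr)\,ds=\sum_{a=0}^{m}\frac{r^{a+1}}{(a+1)!}\int_{X}c_{1}(\mathcal L)^{a}\,[\hat A(X)]^{m-a},
\]
up to an error which, together with the corresponding $\varepsilon$-corrections in the spectral-flow term below, must be shown to be $O(1)$.

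The bulk of the work is the spectral flow. The $S^{1}$-action splits $L^{2}(Y;S\otimes L)=\bigoplus_{k\in\mathbb Z}\mathcal H_{k}$ into weight spaces, and $\mathcal H_{k}$ is identified with sections over $X$ of the relevant twisted $\mathrm{Spin}^{c}$ Dirac bundle; under this identification, and for the choice of $L$ in the statement, $D_{A_{s}}|_{\mathcal H_{k}}$ becomes a twisted Dirac operator $D_{X}^{(k)}$ on $X$ of index $\int_{X}\hat A(X)e^{kc_{1}(\mathcal L)}$, plus a vertical Clifford term. The adiabatic expansion of this vertical term has leading part proportional to $k-s$ together with a zeroth-order piece coming from the Clifford action of $d\alpha\sim\pi^{*}\omega$, which on the holomorphic component of the spinors is a scalar producing the shift by $\tfrac{\varepsilon m}{2}$; one then shows that the spectrum of $D_{A_{s}}|_{\mathcal H_{k}}$ near $0$ is governed by the finite-rank operator with eigenvalues $\pm\bigl(k-s-\tfrac{\varepsilon m}{2}\bigr)$ on the harmonic subspace, the remaining eigenvalues staying away from $0$ uniformly in $k$. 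Hence, as $s$ runs from $0$ to $r$, zero eigenvalues occur exactly in the modes $1\le k\le[r+\tfrac{\varepsilon m}{2}]$, the net spectral flow at the $k$-th crossing being $\mathrm{ind}\,D_{X}^{(k)}=\int_{X}\hat A(X)e^{kc_{1}(\mathcal L)}=\sum_{a=0}^{m}\frac{k^{a}}{a!}\int_{X}c_{1}(\mathcal L)^{a}[\hat A(X)]^{m-a}$ — up to a discrepancy for the finitely many small $k$, where the relevant Kodaira-type vanishing may fail, which only affects the $O(1)$. Summing over $k$ and combining with the transgression term yields the asserted asymptotics.

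I expect the principal obstacle to be the adiabatic analysis of $D_{A_{s}}$ on the weight spaces: pinning down the precise shift $\tfrac{\varepsilon m}{2}$ in the vanishing locus of the vertical symbol, which is exactly what sharpens the cutoff from $[r]+O(1)$ to $[r+\tfrac{\varepsilon m}{2}]$, and — more delicate — controlling the non-harmonic part of the spectrum of $D_{A_{s}}|_{\mathcal H_{k}}$ uniformly over all modes $k\lesssim r$ simultaneously, so that the mode-by-mode errors do not accumulate beyond $O(1)$. A closely related point is to verify that the $\varepsilon$-dependent corrections appearing in the transgression integral cancel those arising in the spectral flow, so that their combined contribution is genuinely bounded as $r\to\infty$.
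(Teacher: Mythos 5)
Your outline matches the paper's: express $\bar\eta^{r,\varepsilon}$ via the Atiyah--Patodi--Singer variation formula as a transgression integral $\int_0^r\int_X\hat A(X)e^{sc_1(\mathcal L)}\,ds$, which gives the $\tfrac{r^{a+1}}{(a+1)!}$ terms, plus a spectral-flow term, which by Fourier decomposition along the fibre, Kodaira vanishing and Hirzebruch--Riemann--Roch gives the $-\sum_k\tfrac{k^a}{a!}$ terms. However, the step you flag as the ``principal obstacle'' --- controlling the non-harmonic spectrum of $D_{A_s}|_{\mathcal H_k}$ uniformly over all modes $k$ so that mode-by-mode errors do not accumulate beyond $O(1)$ --- is the actual content of the theorem, and your proposal does not close it.

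The paper closes it by exploiting the Kahler structure to diagonalize $D_{A_r,\varepsilon}$ exactly, not merely near zero. From the splitting \prettyref{eq:Kahler Dirac op. splitting}, the operator $D_{A_r,\varepsilon}$ commutes with $\Delta_{\bar\partial_k}$ and, by Lemma \prettyref{lem:Lemma to compute spectrum}, preserves two-dimensional blocks spanned by a $\bar{\partial}_k^*$-closed $\mu$-eigenform and its $\bar\partial_k$-image; this yields the explicit spectrum of Proposition \prettyref{prop:Computation of Dirac Spectrum}. The ``Type 2'' eigenvalues, those involving a nonzero eigenvalue $\tfrac12\mu^2$ of the Dolbeault Laplacian, can change sign only when $\tfrac12\mu^2\leq\tfrac{\varepsilon}{8}$, and the Bochner--Kodaira--Nakano inequality shows the first positive eigenvalue of $\Delta_{\bar\partial_k}^p$ grows with $|k|$, so only finitely many $k$ can contribute --- this is the $O(1)$. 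The remaining ``Type 1'' eigenvalues $(-1)^p(k+\varepsilon(p-\tfrac m2)-s)$ live on $H^p(X,\mathcal K\otimes\mathcal L^{\otimes k})$; note the $p$-dependent shift $\varepsilon(p-\tfrac m2)$, not the uniform $-\tfrac{\varepsilon m}{2}$ you quote, though the discrepancy is absorbed into $O(1)$ once Kodaira vanishing isolates $p=0$. Their signed flow then collapses to $\chi(X,\mathcal K\otimes\mathcal L^{\otimes k})$, as you anticipated. Without the exact diagonalization and the Bochner--Kodaira--Nakano lower bound, the uniform control over $k$ is unestablished and the stated asymptotics does not follow.
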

From this formula we observe that $\bar{\eta}^{r,\varepsilon}$, in
this case, exhibits jump discontinuities at integer values of $r+\frac{\varepsilon m}{2}$.
Furthermore, the size of the jumps is growing at the rate $r^{\frac{n-1}{2}}$
as $r\rightarrow\infty$ . Hence this calculation demonstrates that
\prettyref{thm:main theorem} cannot be improved beyond an $O(r^{\frac{n-1}{2}})$
estimate on the eta invariant.

The eta invariant is a non-local quantity. That is, it cannot be written
as an integral over the manifold of a canonical differential from
obtained from the symbol of the operator. This makes it difficult
to compute the eta invariant explicitly. In the final section of this
paper we give an exact formula for the eta invariant $\bar{\eta}^{r,\varepsilon}$,
assuming the value of the adiabatic parameter $\varepsilon$ to be
small, using the adiabatic limit technique of Bismut-Cheeger, Dai
and Zhang \cite{Bismut-Cheeger,Dai,Zhang}. We refer to \prettyref{thm:Eta invariant explicit computation}
for the exact formula arising from the computation. A striking feature
of this formula is that it expresses the eta invariant $\bar{\eta}^{r,\varepsilon}$
in purely topological terms on the base $X$. This generalizes a similar
known computation in dimension three of Nicolaescu \cite{Nicolaescu-Eta}.

An asymptotic result of the form \prettyref{thm:main theorem} was
used by Taubes in \cite{Taubes-SpectralFlow,Taubes-Weinstein} in
order to prove the Weinstein conjecture on the existence of Reeb orbits
on three dimensional contact manifolds. Our results improve the estimates
obtained therein and could lead to further information regarding Reeb
orbits. The three dimensional case has been further explored, under
certain hypotheses, by Tsai in \cite{Tsai-thesis}.

In another direction, the asymptotics considered in this paper are
closely related to the asymptotic results of Bismut-Vasserot from
\cite{Bismut,Bismut-Vasserot}. In \cite{Bismut-Vasserot} the authors
considered the Dolbeault Laplacian $\Delta_{\bar{\partial_{k}}}^{p}$
acting on $p$-forms, with values in a tensor power $\mathcal{L}^{\otimes k}$,
of the positive line bundle $\mathcal{L}$ considered here earlier.
They then derived an asymptotic formula for the holomorphic torsion
of $\Delta_{\bar{\partial_{k}}}^{p}$ in the limit as $k\rightarrow\infty$.
The asymptotics of the heat trace of $\Delta_{\bar{\partial_{k}}}^{p}$
were used in \cite{Bismut} to prove Demailly's asymptotic Morse inequalities.
This Laplacian will arise in our computations in \prettyref{sec:Eta-invariant-of}
and it would be interesting to explore this connection further. 

The paper is organized as follows. In \prettyref{sec:Preliminaries}
we begin with preliminary notations and facts used in the paper. In
\prettyref{sec:Asymptotics-of-the} we derive asymptotics of heat
traces required in the proof of \prettyref{thm:main theorem}. In
\prettyref{sec:Asymptotics-} we derive the asymptotics of the spectral
measure of a rescaled Dirac operator and prove \prettyref{thm:main theorem}.
In \prettyref{sec:Eta-invariant-of} we consider the eta invariant
of the circle bundle. There we prove \prettyref{thm:Eta invrian asmp S1 bundle}
and give the exact computation for the eta invariant of \prettyref{thm:Eta invariant explicit computation}.

\section{Preliminaries\label{sec:Preliminaries}}

\noindent Consider a compact, oriented, Riemannian manifold $(Y,g^{TY})$
of odd dimension $n$ equipped with a spin structure. Let $S$ be
the corresponding spin bundle on $Y$. Let $\nabla^{TY}$ denote the
Levi-Civita connection on $TY$. This lifts to the spin connection
$\nabla^{S}$ on the spin bundle $S$. We denote the Clifford multiplication
endomorphism by $c:T^{*}Y\rightarrow S\otimes S^{*}$ satisfying
\begin{align*}
c(a)^{2}=-|a|^{2}, & \quad\forall a\in T^{*}Y.
\end{align*}
Let $L$ be a Hermitian line bundle on $Y$. Let $A_{0}$ be a fixed
unitary connection on $L$ and let $a\in\Omega^{1}(Y;i\mathbb{R})$
be an imaginary 1-form on $Y$. This gives a family $A_{r}=A_{0}+ra$
of unitary connections on $L$. We denote by $\nabla^{r}=\nabla^{S}\otimes1+1\otimes A_{r}$
the tensor product connection on $S\otimes L.$ Each such connection
defines a coupled Dirac operator 
\begin{align*}
D_{A_{r}}=c\circ\nabla^{r}:C^{\infty}(Y;S\otimes L)\rightarrow C^{\infty}(Y;S\otimes L).
\end{align*}
Each Dirac operator $D_{A_{r}}$ is elliptic and self-adjoint. It
hence possesses a discrete spectrum of eigenvalues. Define the eta
function of $D_{A_{r}}$ by the formula
\begin{align}
\eta(D_{A_{r}},s)= & \sum_{\begin{subarray}{l}
\quad\:\lambda\neq0\\
\lambda\in\textrm{Spec}(D_{A_{r}})
\end{subarray}}\textrm{sign}(\lambda)|\lambda|^{-s}=\frac{1}{\Gamma(\frac{s+1}{2})}\int_{0}^{\infty}t^{\frac{s-1}{2}}\textrm{tr}\left(D_{A_{r}}e^{-tD_{A_{r}}^{2}}\right)dt.\label{eq:eta invariant definition}
\end{align}
Here, and in the remainder of the paper, we use the convention that
$\textrm{Spec}(D_{A_{r}})$ denotes a multiset with each eigenvalue
of $D_{r}$ being counted with its multiplicity. The above series
converges for $\textrm{Re}(s)>n.$ It was shown in \cite{APSI,APSIII}
that the eta function possesses a meromorphic continuation to the
entire complex $s$-plane and has no pole at zero. Its value at zero
is defined to be the eta invariant of the operator $\eta(D_{A_{r}})=\eta(D_{A_{r}},0)$.
By including the zero eigenvalue in \prettyref{eq:eta invariant definition},
with an appropriate convention, we may define a variant known as the
reduced eta invariant by 
\begin{align*}
\bar{\eta}(D_{A_{r}})= & \frac{1}{2}\left\{ \textrm{dim ker}(D_{A_{r}})+\eta(D_{A_{r}})\right\} .
\end{align*}
We shall henceforth denote the reduced eta invariant by the shorthand
$\bar{\eta}^{r}=\bar{\eta}(D_{A_{r}})$, and would like to investigate
its asymptotics for large $r$. Our results will apply equally well
to the unreduced version $\eta^{r}=\eta(D_{A_{r}})$. 

Let $L_{t}^{r}$ denote the Schwartz kernel of the operator $D_{A_{r}}e^{-tD_{A_{r}}^{2}}$
on the product $Y\times Y$. Denote by $\textrm{tr}\left(L_{t}^{r}\left(x,x\right)\right)$
the pointwise trace of $L_{t}^{r}$ along the diagonal. We may now
analogously define the function 
\begin{align}
\eta(D_{A_{r}},s,x)= & \frac{1}{\Gamma(\frac{s+1}{2})}\int_{0}^{\infty}t^{\frac{s-1}{2}}\textrm{tr}\left(L_{t}^{r}\left(x,x\right)\right)dt.\label{eq:eta function diagonal}
\end{align}
In \cite{Bismut-Freed-II} theorem 2.6, the authors showed that for
$\textrm{Re}(s)>-2$, the function $\eta(D_{A_{r}},s,x)$ is holomorphic
in $s$ and smooth in $x$. From \prettyref{eq:eta function diagonal}
it is clear that this is equivalent to 
\begin{align}
\textrm{tr}(L_{t}^{r})= & O(t^{\frac{1}{2}}),\quad\textrm{as}\: t\rightarrow0.\label{eq:pointwise trace asymp as t->0}
\end{align}

\section{Asymptotics of the heat kernel\label{sec:Asymptotics-of-the}}

In order to control the eta invariant we shall need to find the asymptotics
for the heat traces of $D_{A_{r}}$. We begin with an estimate on
its heat kernel. We denote by $dy$ the Riemannian volume form on
$(Y,g)$. All kernels will be calculated with respect to $dy$ in
what follows. Let $i_{g}$ denote the injectivity radius of $Y.$
Let $\rho(x,y)$ denote the geodesic distance function between two
given points $x,y\in Y$. Define a function on $Y\times Y$ by the
following formula 
\[
h_{t}(x,y)=\frac{e^{-\frac{\rho(x,y)^{2}}{4t}}}{(4\pi t)^{\frac{n}{2}}}.
\]
Let $H_{t}^{r}(x,y)$ denote the kernel of $e^{-tD_{A_{r}}^{2}}$
for $t>0$. We now have the following estimate. 
\begin{prop}
\label{prop:Maximum Principle estimate}There exist positive constants
$c_{1},c_{2}$ independent of $r$ such that 
\begin{equation}
|H_{t}^{r}(x,y)|\leq c_{1}h_{2t}(x,y)e^{c_{2}rt}\label{eq:mest}
\end{equation}
for all $x,y\in Y,$$t>0$ and $r\geq1$.\end{prop}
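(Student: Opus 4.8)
The plan is to reduce, via the Lichnerowicz--Weitzenb\"ock formula, to a domination of the heat semigroup of $D_{A_{r}}^{2}$ by the scalar heat semigroup of $(Y,g^{TY})$, and then to invoke the classical Gaussian upper bound for the latter. First write
\[
D_{A_{r}}^{2}=(\nabla^{r})^{*}\nabla^{r}+\mathcal{R}_{r},\qquad\mathcal{R}_{r}=\frac{\kappa}{4}+c(F_{A_{r}}),
\]
where $(\nabla^{r})^{*}\nabla^{r}$ is the Bochner Laplacian on $S\otimes L$, $\kappa$ is the scalar curvature of $g^{TY}$, and $c(F_{A_{r}})$ is Clifford multiplication by the curvature $2$-form of $A_{r}$. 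Since $A_{r}=A_{0}+ra$ one has $F_{A_{r}}=F_{A_{0}}+r\,da$, which is affine in $r$; hence $\mathcal{R}_{r}$ is a self-adjoint bundle endomorphism with $\norm{\mathcal{R}_{r}}_{L^{\infty}}\le C(1+r)\le c_{2}r$ for all $r\ge1$, where $c_{2}$ depends only on $(Y,g^{TY},L,A_{0},a)$.

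The core step is the pointwise domination
\[
\left|H_{t}^{r}(x,y)\right|\le e^{c_{2}rt}\,p_{t}(x,y),\qquad t>0,\ x,y\in Y,
\]
where $p_{t}$ is the heat kernel of the non-negative scalar Laplacian $\Delta=d^{*}d$ on $Y$; this is where the maximum principle enters. Fix $y$ and put $\psi_{t}=H_{t}^{r}(\cdot,y)$, so that $\partial_{t}\psi_{t}+D_{A_{r}}^{2}\psi_{t}=0$ on $(0,\infty)\times Y$ and hence $(\nabla^{r})^{*}\nabla^{r}\psi_{t}=-\partial_{t}\psi_{t}-\mathcal{R}_{r}\psi_{t}$. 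Kato's inequality then gives, in the distributional sense,
\[
(\partial_{t}+\Delta)|\psi_{t}|\le\mathrm{Re}\,\frac{\langle-\mathcal{R}_{r}\psi_{t},\psi_{t}\rangle}{|\psi_{t}|}\le\norm{\mathcal{R}_{r}}_{L^{\infty}}|\psi_{t}|\le c_{2}r\,|\psi_{t}|,
\]
so that $v_{t}:=e^{-c_{2}rt}|\psi_{t}|\ge0$ is a subsolution of the scalar heat equation. The parabolic maximum principle on the closed manifold $Y$ then gives $v_{t}(x)\le\int_{Y}p_{t-s}(x,z)\,v_{s}(z)\,dz$ for $0<s<t$, and letting $s\to0^{+}$, using that $H_{s}^{r}(\cdot,y)$ concentrates at $y$ with small-time leading term $h_{s}(\cdot,y)$ times a parallel transport of unit operator norm (and is negligible away from the diagonal), yields the displayed domination with the explicit constant $e^{c_{2}rt}$. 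Alternatively the same bound follows by Duhamel's principle from the potential-free comparison $|K_{t}^{r}(x,y)|\le p_{t}(x,y)$ for the heat kernel $K_{t}^{r}$ of $(\nabla^{r})^{*}\nabla^{r}$: the Volterra series for $e^{-t((\nabla^{r})^{*}\nabla^{r}+\mathcal{R}_{r})}$ telescopes, by the semigroup property of $p_{t}$, to $|H_{t}^{r}(x,y)|\le\sum_{k\ge0}\frac{(c_{2}rt)^{k}}{k!}\,p_{t}(x,y)=e^{c_{2}rt}\,p_{t}(x,y)$.

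It remains to compare $p_{t}$ with $h_{2t}$. For $0<t\le1$ the classical short-time bound $p_{t}(x,y)\le C_{1}t^{-n/2}e^{-\rho(x,y)^{2}/(5t)}$ gives $p_{t}(x,y)\le C_{1}(8\pi)^{n/2}h_{2t}(x,y)$, since $e^{-\rho(x,y)^{2}/(5t)}\le e^{-\rho(x,y)^{2}/(8t)}$. For $t\ge1$, $p_{t}$ is uniformly bounded while $h_{2t}(x,y)e^{t}\ge(8\pi t)^{-n/2}e^{-\mathrm{diam}(Y)^{2}/8}e^{t}$ is bounded below by a positive constant; hence $p_{t}(x,y)\le c_{1}h_{2t}(x,y)e^{t}$ for all $t>0$ with a suitable $c_{1}$. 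Combining with the domination and using $e^{t}\le e^{rt}$ for $r\ge1$ gives $\left|H_{t}^{r}(x,y)\right|\le c_{1}h_{2t}(x,y)e^{(c_{2}+1)rt}$, which is the estimate (\ref{eq:mest}) after enlarging $c_{2}$ to $c_{2}+1$.

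I expect the main technical point to be the domination step: verifying that $e^{-c_{2}rt}|H_{t}^{r}(\cdot,y)|$ is a genuine distributional subsolution (the precise form of Kato's inequality for the generalized Laplacian $D_{A_{r}}^{2}$), and justifying the passage to the limit $s\to0^{+}$ in the maximum-principle comparison against the small-time parametrix of $H_{s}^{r}(\cdot,y)$; equivalently one may quote the diamagnetic/semigroup-domination inequality for generalized Laplacians directly, which requires only the lower bound $\mathcal{R}_{r}\ge-c_{2}r$ established above. The remaining ingredients --- the Lichnerowicz formula, the affine $r$-dependence of $F_{A_{r}}$, and the compact-manifold heat kernel bound --- are routine.
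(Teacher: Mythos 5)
Your proposal is correct and follows essentially the same route as the paper: Weitzenb\"{o}ck/Lichnerowicz to isolate an endomorphism that is affine in $r$ (hence $O(r)$ in norm), a maximum-principle/Kato comparison showing $e^{-c_{2}rt}|H^{r}_{t}(\cdot,y)|$ is a subsolution of the scalar heat equation, domination by the scalar heat kernel, and then the Gaussian bound on that kernel (short time from the standard Cheng--Li--Yau type estimate, long time from boundedness, absorbing the $e^{t}$ factor using $r\ge1$). The only cosmetic differences are that you name Kato's inequality explicitly and offer the Duhamel/Volterra telescoping as an alternative to the pointwise maximum principle; both are standard and sound.
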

\begin{proof}
Let $\nabla^{S}$ denote the spin connection on $S$ and $\nabla^{r}=\nabla^{S}\otimes1+1\otimes A_{r}$
be the tensor product connection on $S\otimes L.$ First observe that
for fixed $y$ the section $s_{t}(.)=H_{t}^{r}(.,y)$ satisfies the
heat equation $\partial_{t}s_{t}=-D_{A_{r}}^{2}s_{t}$. The Weitzenbock
formula gives

\[
D_{A_{r}}^{2}=\nabla^{r*}\nabla^{r}+c(F_{A_{0}})+rc(da)+\frac{\kappa}{4}
\]
where $F_{A_{0}},\kappa$ denote the curvature of $A_{0}$ and the
scalar curvature of $g$ respectively. Using the Weitzenbock formula
and the heat equation $\partial_{t}s_{t}=-D_{A_{r}}^{2}s_{t}$, we
now see that the function $f_{t}=|s_{t}|$ obeys the inequality

\begin{equation}
\partial_{t}f_{t}\leq-d^{*}df_{t}+c_{1}(r+1)f_{t}
\end{equation}
 for some constant $c_{1}>0$ independent of $r$. Hence the function
$f{}_{t}^{0}=e^{-c_{1}(r+1)t}f_{t}$ satisfies the inequality 
\begin{equation}
\partial_{t}f{}_{t}^{0}\leq-d^{*}df{}_{t}^{0}.\label{eq:subsol}
\end{equation}
Let $\Phi_{t}(x,y)$ denote the heat kernel $e^{-td^{*}d}$ for the
Laplace operator acting on functions on $Y$. Now since $|H_{t}^{r}(x,y)|$
and $\Phi_{t}(x,y)$ have the same asymptotics as $t\rightarrow0$,
an application of the maximum principle for the heat equation gives 

\begin{equation}
f{}_{t}^{0}\leq\Phi_{t}(x,y)\label{eq:scalar heat kernel comp.}
\end{equation}
 for all time $t>0$. Next we use the estimate 
\begin{equation}
\Phi_{t}(x,y)\leq c_{3}e^{t}h_{2t}(x,y),\quad\forall t>0,\label{eq:scalar heat kernel estimate}
\end{equation}
 on the heat kernel. Equation \prettyref{eq:scalar heat kernel estimate}
follows for large time since the heat kernel is bounded 
\begin{align*}
\Phi_{t}(x,y)\leq c_{4} & \quad\textrm{for }\quad\forall x,y\in Y\,\textrm{and }\, t\geq1.
\end{align*}
For small time, \prettyref{eq:scalar heat kernel estimate} follows
from the heat kernel estimate of \cite{Cheng-Li-Yau}. The proposition
now follows from \prettyref{eq:scalar heat kernel comp.} and \prettyref{eq:scalar heat kernel estimate}.
\end{proof}
Following this we shall prove a more refined estimate on the heat
kernel comparing it with Mehler's kernel. We first recall the definition
of the Mehler's kernel. Define an antisymmetric endomorphism $A$
of $TY$ via 
\begin{equation}
ida(X,Y)=g(X,AY),\,\forall X,Y\in TY.\label{eq:antisymm endomorphism A}
\end{equation}
Let $x,y$ be two points of $Y$ such that $\rho(x,y)<i_{g}$. Let
$v\in T_{y}Y$ such that $x=\exp_{v}y$. Define a function on a geodesic
neighborhood of the diagonal in $Y\times Y$ by
\begin{equation}
m_{t}^{r}(x,y)=\frac{1}{(4\pi t)^{\frac{n}{2}}}\det{}^{\frac{1}{2}}\left(\frac{rtA_{y}}{\sinh rtA_{y}}\right)\exp\left\{ -\frac{1}{4t}g(v,rtA_{y}\coth(rtA_{y})v)\right\} \label{eq:function m_t in Mehler's kernel}
\end{equation}
Now let $\pi_{1}$ and $\pi_{2}$ denote the projections onto the
two factors of $Y\times Y$ and define a section $e^{-tF_{A_{r}}}$
of $\pi_{1}^{*}(S\otimes L)\otimes\pi_{2}^{*}(S\otimes L)^{*}$, in
a geodesic neighborhood of the diagonal. This restricts to $e^{-tF_{A_{r}}}|_{\Delta}=e^{-tc(F_{A_{r}})}$
at the diagonal $\Delta$ and is parallel along geodesics $(\exp_{tv}(y),y)$.
Consider a smooth cutoff function satisfying
\begin{equation}
\chi(x)=\begin{cases}
0 & \textrm{if }\quad|x|\geq i_{g}\\
1 & \textrm{if }\quad|x|\leq\frac{i_{g}}{2}.
\end{cases}\label{eq:cutoff}
\end{equation}
Mehler's kernel is now defined via 
\begin{equation}
M_{t}^{r}(x,y)=\chi(\rho(x,y))m_{t}^{r}(x,y)e^{-tF_{A_{r}}}.\label{eq:Mehler's kernel def}
\end{equation}

\begin{prop}
\label{prop:Heat Mehler comparison}There exist positive constants
$c_{1}$ and $c_{2}$ independent of $r$ such that 
\begin{equation}
|H_{t}^{r}(x,y)-M_{t}^{r}(x,y)|\leq c_{1}h_{8t}(x,y)t^{\frac{1}{2}}e^{c_{2}rt},\label{eq:mhest}
\end{equation}
 for all $x,y\in Y,$$t>0$ and $r\geq1$.\end{prop}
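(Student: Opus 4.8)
The plan is to run the classical parametrix/Duhamel argument: show that Mehler's kernel $M_t^r$ solves the heat equation for $D_{A_r}^{2}$ up to a small, diagonally concentrated error, and then convert that error into the comparison estimate \prettyref{eq:mhest}. First note that $m_t^r(x,y)$ tends to the delta kernel on the diagonal as $t\to0$, while $\chi(\rho(x,y))\to1$ and $e^{-tF_{A_r}}\to\mathrm{Id}$ near the diagonal; hence $M_t^r$ has the same initial data as $H_t^r$. Writing $E_t^r(\cdot,y)=\left(\partial_t+D_{A_r}^{2}\right)M_t^r(\cdot,y)$ for the error (with $D_{A_r}^{2}$ acting in the first variable), differentiating $e^{-(t-s)D_{A_r}^{2}}M_s^r$ in $s$, integrating from $\varepsilon$ to $t$ and letting $\varepsilon\to0$ yields Duhamel's formula
\begin{equation}
\left(H_t^r-M_t^r\right)(x,y)=-\int_0^t\!\!\int_Y H_{t-s}^r(x,z)\,E_s^r(z,y)\,dz\,ds.\label{eq:Duhamel for Mehler}
\end{equation}
Since \prettyref{prop:Maximum Principle estimate} already controls $H_{t-s}^r$, the whole estimate reduces to a pointwise bound on the error: there exist $c,c'>0$ independent of $r$ with
\begin{equation}
\left|E_s^r(z,y)\right|\leq c\,s^{-\frac{1}{2}}h_{4s}(z,y)e^{c'rs},\qquad s>0,\ z,y\in Y.\label{eq:Mehler error bound}
\end{equation}

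To establish \prettyref{eq:Mehler error bound} I would first dispose of the terms in which a derivative falls on the cutoff $\chi$ of \prettyref{eq:Mehler's kernel def}: these are supported where $\frac{i_{g}}{2}\leq\rho(z,y)\leq i_{g}$ and are $O\!\left(e^{-i_{g}^{2}/(Cs)}\right)$ because of the Gaussian factor in \prettyref{eq:function m_t in Mehler's kernel}, hence harmless. On the region $\chi\equiv1$ one passes to geodesic normal coordinates centred at $y$ and trivialises $S\otimes L$ by radial parallel transport for $\nabla^r$; then $e^{-sF_{A_r}}$ is constant and $D_{A_r}^{2}=\mathcal{H}_{y}^{r}+\mathcal{R}_{y}^{r}$, where $\mathcal{H}_{y}^{r}$ is the model magnetic/harmonic-oscillator operator with constant curvature $r\,da(y)$ (via \prettyref{eq:antisymm endomorphism A}) together with the frozen zeroth-order potential, for which $m_s^r e^{-sF_{A_r}}$ is, by the defining property of Mehler's kernel, the exact heat kernel, i.e. $\left(\partial_s+\mathcal{H}_{y}^{r}\right)\!\left(m_s^r e^{-sF_{A_r}}\right)=0$. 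Thus $E_s^r=\mathcal{R}_{y}^{r}\!\left(m_s^r e^{-sF_{A_r}}\right)$ there, and the remainder $\mathcal{R}_{y}^{r}$ contains only genuinely lower-order pieces: the difference between the true Laplacian and its flat model, whose coefficients vanish at $y$ (schematically $O(\rho)\,\partial+O(\rho^{2})\,\partial^{2}$, plus an $O(\rho^{2})\,r\,\partial$ term from the second-order variation of the magnetic potential), together with the bounded zeroth-order term $c(F_{A_{0}})+\frac{\kappa}{4}$. Applying $\mathcal{R}_{y}^{r}$ and using (i) the bound $|m_s^r|\leq h_s$, which follows from the elementary inequalities $\left|x/\sinh x\right|\leq1$ and $x\coth x\geq1$ applied to \prettyref{eq:function m_t in Mehler's kernel}; (ii) Gaussian derivative bounds of the form $\left|\partial_z^{\alpha}m_s^r(z,y)\right|\leq C_\alpha\left(s^{-1/2}+r|A_y|\right)^{|\alpha|}h_{2s}(z,y)$; (iii) the absorption inequalities $\rho(z,y)^{k}h_{2s}(z,y)\leq C_k s^{k/2}h_{4s}(z,y)$; and (iv) the fact that large values of $rs|A_y|$ force extra Gaussian concentration in $m_s^r$ which compensates the accompanying powers of $r$, one checks term by term that $\mathcal{R}_{y}^{r}\!\left(m_s^r e^{-sF_{A_r}}\right)$ obeys \prettyref{eq:Mehler error bound}; the factor $e^{c'rs}$ comes from $e^{-sF_{A_r}}=e^{-sc(F_{A_{0}})-rsc(da)}$.

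To finish, substitute \prettyref{prop:Maximum Principle estimate} and \prettyref{eq:Mehler error bound} into \prettyref{eq:Duhamel for Mehler}:
\begin{align*}
\left|\left(H_t^r-M_t^r\right)(x,y)\right| & \leq c_1'\,e^{c_2'rt}\int_0^t s^{-\frac{1}{2}}\left(\int_Y h_{2(t-s)}(x,z)\,h_{4s}(z,y)\,dz\right)ds .
\end{align*}
By the standard semigroup-type estimate for the Gaussian bounds $h_{\bullet}$ on a compact manifold (comparison with Euclidean heat kernels, $\rho$ being nearly Euclidean on scales below $i_{g}$) the inner integral is $\leq C\,h_{8t}(x,y)$ uniformly in $s\in(0,t)$, and $\int_0^t s^{-1/2}\,ds=2t^{1/2}$, so renaming constants gives \prettyref{eq:mhest}. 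The only genuinely non-routine point in the whole argument is the uniform-in-$r$ error bound \prettyref{eq:Mehler error bound}: the challenge is to ensure that every term produced by $\mathcal{R}_{y}^{r}$ — in particular those carrying the large parameter $r$ through the frozen magnetic field — comes with enough powers of the geodesic distance, or enough Gaussian concentration from $m_s^r$, to be absorbed without any residual growth in $r$ beyond $e^{c'rs}$; this is exactly what the twist $e^{-tF_{A_r}}$ built into \prettyref{eq:Mehler's kernel def} is there to secure.
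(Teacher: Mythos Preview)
Your approach is the paper's: Duhamel's formula \eqref{eq:Duhamel for Mehler} plus a pointwise bound on the defect $(\partial_s+D_{A_r}^{2})M_s^r$, finished off by the Gaussian convolution estimate (the paper's \prettyref{eq:ehi2}). There are, however, two places where your sketch of the error bound \eqref{eq:Mehler error bound} is not yet a proof.

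First, your enumeration of $\mathcal{R}_y^r$ is incomplete. Squaring $\nabla_i^r=\partial_i+\tfrac12 r x^j(da)_{ij}(0)+x^jA^0_{ij}+rx^jx^kA_{ijk}+\Gamma_i$ produces, besides the $O(\rho^2)\partial^2$, $O(\rho)\partial$ and $O(r\rho^2)\partial$ pieces you list, also zeroth-order remainder terms of size $O(r^2\rho^3)$ and $O(r\rho)$ (cross terms of the frozen magnetic potential with its Taylor remainder, with $A^0$ and $\Gamma$, and the first variation of $r\,c(da)$). The paper records the full remainder as $E=Px^2\partial^2+Qrx^2\partial+R\partial+Sr^2x^3+Trx+U$.

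Second, and more seriously, your derivative bound (ii) is too crude to close the loop without (iv), and (iv) is only a slogan. Concretely, applying (ii)$+$(iii) to the term $Q\,r\,x^2\partial$ yields a contribution $C r^2 s\,|A_y|\,h_{4s}$, and $r^2 s$ is \emph{not} $\le C s^{-1/2}e^{c'rs}$ uniformly as $s\to 0$ with $rs$ held fixed. What actually makes the bookkeeping work is that $\partial_i m_s^r=-\tfrac{1}{2s}(Bx)_i\,m_s^r$ with $B=rsA_y\coth(rsA_y)$: the entries of $B$ are functions of $rs$ with at most exponential growth, so one has the sharper estimate $|\partial^\alpha m_s^r|\le C_\alpha\, s^{-|\alpha|/2}e^{crs}h_{2s}$, after which every term of $E$ checks. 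The paper organises this by writing the defect as a finite sum of terms $s^k r^d x^I P_{k,d,I}(x)\,g_{k,d,I}(rs)\,M_s^r$ with $|g(x)|\le c_1 e^{c_2 x}$ and the index constraint $d\le k+\tfrac{|I|}{2}+\tfrac12$; this inequality is exactly the statement that, after $\rho^I h_s\le C s^{|I|/2}h_{2s}$, every explicit power of $r$ pairs with a matching power of $s$ into a polynomial in $rs$ and is hence absorbed by $e^{c'rs}$. Your (iv) gestures at the same mechanism but does not supply it.
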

\begin{proof}
First fix a point $y$ and a set of geodesic coordinates centered
at $y$. Now choose a basis $s_{\alpha}$ for $S_{y}$ and a basis
$l$ for $L_{y}$. Parallel transport this basis along geodesics using
the connections $\nabla^{S},A_{r}$ to obtain trivializations $s_{\alpha}(x)$
and $l(x)$ of $S$ and $L$ respectively near $y$. Now define local
orthonormal sections of $(S\otimes L)\otimes(S\otimes L)_{y}^{*}$
via

\begin{equation}
t_{\alpha\beta}=s_{\alpha}(x)\otimes l(x)\otimes s_{\beta}^{*}\otimes l^{*}.\label{eq:parallel transport frame}
\end{equation}
 The connection $\nabla^{r}$ can be expressed in this frame and these
coordinates as

\begin{equation}
\nabla_{i}^{r}=\partial_{i}+A_{i}^{r}+\Gamma_{i},
\end{equation}
 where each $A_{i}^{r}$ is a Christoffel symbol of $A_{r}$ (or $\text{dim}(S\otimes L)_{y}$
copies of it) and each $\Gamma_{i}$ is a Christoffel symbol of the
spin connection on $S$. Since the section $l(x)$ is obtained via
parallel transport along geodesics, the connection coefficient $A_{i}^{r}$
maybe written in terms of the curvature $F_{ij}^{r}$ of $A^{r}$
via

\begin{equation}
A_{i}^{r}(x)=\int_{0}^{1}d\rho(\rho x^{j}F_{ij}^{r}(\rho x)),
\end{equation}
with the Einstein summation convention being used. The dependence
of the curvature coefficients $F_{ij}^{r}$ on the parameter $r$
is seen to be linear $F_{ij}^{r}=F_{ij}^{0}+r(da)_{ij}^{0}$ despite
the fact that they are expressed in the $r$ dependent frame $l$.
This is because a gauge transformation from an $r$ independent frame
into $l$ changes the curvature coefficient by conjugation. Since
$L$ is a line bundle this is conjugation by a function and hence
does not change the coefficient. Next, using the Taylor expansion
$(da)_{ij}=(da)_{ij}(0)+x^{k}a_{ijk}$, we see that the connection
$\nabla^{r}$ has the form

\begin{equation}
\nabla_{i}^{r}=\partial_{i}+\frac{1}{2}rx^{j}(da)_{ij}(0)+x^{j}A_{ij}^{0}+rx^{j}x^{k}A_{ijk}+\Gamma_{i}.\label{eq:cdic}
\end{equation}
 Here $A_{ij}^{0}=\int_{0}^{1}d\rho(\rho F_{ij}^{0}(\rho x))$, $A_{ijk}=\int_{0}^{1}d\rho(\rho a_{ijk}(\rho x))$
and $\Gamma_{i}$ are all independent of $r$. 

Now using Weitzenbock's formula, we note that the operator $D_{A_{r}}^{2}$
has the form

\begin{align}
D_{A_{r}}^{2} & =\mathcal{H}+E,\quad\text{with}\label{eq:dssf}\\
\mathcal{H} & =-\partial_{i}^{2}-r(da)_{ij}(0)x^{j}\partial_{i}-\frac{r^{2}}{4}x^{i}x^{j}\left(\sum_{k}(da)_{ik}(0)(da)_{jk}(0)\right)+c\left(F_{A_{r}}\right)\quad\text{and}\\
E & =P_{ijkl}x^{k}x^{l}\partial_{i}\partial_{j}+Q_{ijk}rx^{j}x^{k}\partial_{i}+R_{i}\partial_{i}+S_{ijk}r^{2}x^{i}x^{j}x^{k}+T_{i}rx^{i}+U.\label{eq:ser}
\end{align}
 Here $P,Q,R,S,T$ and $U$ are each smooth endomorphisms of $S\otimes L$
independent of $r$. Since $(\partial_{t}+D_{A_{r}}^{2})H_{t}=0$
we now have

\begin{equation}
\left(\partial_{t}+D_{A_{r}}^{2}\right)\left(H_{t}^{r}-M_{t}^{r}\right)=-\left(\partial_{t}+\mathcal{H}\right)M_{t}^{r}-EM_{t}^{r}.\label{eq:hdhm}
\end{equation}
Note that the right hand side of \prettyref{eq:hdhm} is zero for
$\rho(x,y)>i_{g}$ since $M_{t}^{r}$ is supported in a geodesic neighborhood
of the diagonal, by \prettyref{eq:Mehler's kernel def}. From the
defining equations \prettyref{eq:function m_t in Mehler's kernel}
and \prettyref{eq:Mehler's kernel def}, Mehler's kernel is given
in geodesic coordinates via 
\begin{align}
M_{t}^{r}(x,y)= & \chi(\rho(x,y))m_{t}^{r}(x,y)e^{-tF_{A_{r}}}\label{eq:Mehler's kernel in coordinates}\\
= & \chi(|x|)\frac{1}{(4\pi t)^{\frac{n}{2}}}\det{}^{\frac{1}{2}}\left(\frac{rtA_{y}}{\sinh rtA_{y}}\right)\exp\left\{ -\frac{1}{4t}\left\langle x,rtA_{y}\coth(rtA_{y})x\right\rangle \right\} e^{-tc(F_{A_{r}})}.\label{eq:Mehler's kernel coordinates}
\end{align}
We now differentiate \prettyref{eq:Mehler's kernel coordinates} using
\prettyref{eq:dssf}-\prettyref{eq:ser} to compute the right hand
side of \prettyref{eq:hdhm}. By Mehler's formula, see section 4.2
in \cite{Berline-Getzler-Vergne}, we have $(\partial_{t}+\mathcal{H})\left\{ m_{t}^{r}(x,y)e^{-tF_{A_{r}}}\right\} =0$
for $d(x,y)<\frac{i_{g}}{2}$. Differentiating the rest, we observe
that the right hand side of \prettyref{eq:hdhm} has the form of a
finite sum

\begin{equation}
-\left(\partial_{t}+\mathcal{H}\right)M_{t}^{r}-EM_{t}^{r}=\sum_{(k,d,I)}t^{k}r^{d}x^{I}P_{k,d,I}(x)g_{k,d,I}(rt)M_{t}^{r},\;\textrm{where}\label{eq:hdsf}
\end{equation}

\begin{itemize}
\item each $(k,d,I)\in\mathbb{Z}\times\mathbb{N}_{0}\times\mathbb{N}_{0}^{n}$
and satisfies the inequality
\begin{align}
d\leq & k+\frac{|I|}{2}+\frac{1}{2},\label{eq:iin}
\end{align}

\item each $P_{k,d,I}$ appearing in \prettyref{eq:hdsf} denotes a smooth
endomorphism of $S\otimes L$, independent of $r$, and supported
on $\rho(x,y)<i_{g}$
\item and $g_{k,d,I}$ in \prettyref{eq:hdsf} denote functions, coming
from the matrix entries of $rtA_{y}\coth(rtA_{y})$, each satisfying
an exponential bound
\begin{align}
|g_{k,d,I}(x) & |<c_{1}e^{c_{2}x}.\label{eq:function exponential bound}
\end{align}

\end{itemize}
Now since the kernels $H_{t}^{r}$ and $M_{t}^{r}$ both have the
same asymptotics as $t\rightarrow0$, Duhamel's principle, using \prettyref{eq:hdhm},
gives 

\begin{equation}
H_{t}^{r}-M_{t}^{r}=\int_{0}^{t}e^{-(t-s)D_{A_{r}}^{2}}\left\{ -(\partial_{s}+\mathcal{H})M_{s}^{r}-EM_{s}^{r}\right\} ds.\label{eq:ddhm}
\end{equation}
Now we substitute \prettyref{eq:hdsf} into \prettyref{eq:ddhm}.
Following this substitution, we use the heat kernel bound \prettyref{eq:mest},
\prettyref{eq:function exponential bound} and the bound 

\begin{equation}
|M_{t}^{r}(x,y)|\leq c_{3}e^{c_{4}rt}h_{t}(x,y)\label{eq:meest}
\end{equation}
 for constants $c_{3}$ and $c_{4}$ independent of $r$. These bounds
can be used to estimate the right hand side of \prettyref{eq:ddhm}
by a sum of finitely many terms of the form
\begin{equation}
c_{5}e^{c_{6}rt}\int_{0}^{t}ds\left(\int_{Y}h_{t-s}(z,x)s^{k}r^{d}\rho(x,y)^{I}h_{s}(x,y)dx\right),\label{eq:generic form integral}
\end{equation}
with each multi-index $(k,d,I)$ above satisfying \prettyref{eq:iin}.
Finally, \prettyref{eq:iin} and the inequalities

\begin{eqnarray}
\rho(x,y)^{I}h_{t}(x,y) & \leq & Ct^{\frac{1}{2}|I|}h_{2t}(x,y),\quad\label{eq:ehi1}\\
\int_{0}^{t}s^{-\frac{1}{2}}ds\left(\int_{Y}dxh_{2\left(t-s\right)}(z,x)h_{2s}(x,y)\right) & \leq & Ct^{\frac{1}{2}}h_{8t}(z,y),\label{eq:ehi2}
\end{eqnarray}
 (see \prettyref{sec:Estimates-on-Gaussian} for a proof of \prettyref{eq:ehi2})
give \prettyref{eq:mhest}.
\end{proof}

\subsection{Bound on the trace of $D_{A_{r}}e^{-tD_{A_{r}}^{2}}$}

We now turn to bound the pointwise $\textrm{tr}(D_{A_{r}}e^{-tD_{A_{r}}^{2}})$.
To this end, first consider the expansion for the heat kernel $H_{t}^{r}(x,y)$
given by

\begin{equation}
H_{t}^{r}(x,y)\sim\chi(\rho(x,y))h_{t}(x,y)\left(b_{0}^{r}(x,y)+b_{1}^{r}(x,y)t+b_{2}^{r}(x,y)t^{2}+\ldots\right).\label{eq:htre}
\end{equation}
Here the coefficients $b_{k}^{r}$ are smooth sections defined on
the neighborhood $\rho(x,y)<i_{g}$ of the diagonal in $Y\times Y$.
They are generated by solving a recursive system of transport equations
along geodesics as in chapter 7 of \cite{Roe}. The kernel $L_{t}^{r}(x,y)$
of $D_{A_{r}}e^{-tD_{A_{r}}^{2}}$ is simply $L_{t}^{r}=D_{A_{r}}H_{t}^{r}$.
It hence has an expansion given by

\begin{equation}
\begin{split}L_{t}^{r}(x,y)\sim & h_{t}(x,y)\left(c(d\chi)+c\left(-\frac{\rho d\rho}{2t}\right)\right)\left\{ b_{0}^{r}(x,y)+b_{1}^{r}(x,y)t+b_{2}^{r}(x,y)t^{2}+\ldots\right\} \\
 & +h_{t}(x,y)\left\{ D_{A_{r}}b_{0}^{r}(x,y)+D_{A_{r}}b_{1}^{r}(x,y)t+D_{A_{r}}b_{2}^{r}(x,y)t^{2}+\ldots\right\} .
\end{split}
\label{eq:dtre}
\end{equation}

By \prettyref{eq:cutoff}, $c(d\chi)$ is an endomorphism of the spin
bundle supported in the region where $\frac{i_{g}}{2}\leq\rho(x,y)\leq i_{g}.$
By \prettyref{eq:pointwise trace asymp as t->0} and \prettyref{eq:dtre},
the pointwise trace $\textrm{tr}(D_{A_{r}}e^{-tD_{A_{r}}^{2}})$ along
the diagonal has an expansion starting with a leading term of order
$t^{\frac{1}{2}}$. Since the restriction to the diagonal of the $c(d\chi)+c\left(-\frac{\rho d\rho}{2t}\right)$
term in \prettyref{eq:dtre} is zero, this implies that
\begin{align}
\textrm{tr}\left(D_{A_{r}}b_{k}^{r}(x,x)\right)=0, & \quad\textrm{for}\: k<\frac{n+1}{2},\label{eq:trace of coeff. L_t vanish}
\end{align}
at each point on the diagonal. To bound the trace of $L_{t}^{r}$
we will need a lemma giving a schematic form for the coefficients
$b_{k}^{r}(x,y)$. We again work in geodesic coordinates centered
at a point $y\in Y.$ Each heat kernel coefficient can be written
in terms of the frame \prettyref{eq:parallel transport frame} as
\begin{equation}
b_{k}^{r}=\sum_{\alpha\beta}f_{\alpha\beta,k}^{r}t_{\alpha\beta}\label{eq:expansion of hk coeff special frame}
\end{equation}
for some functions $f_{\alpha\beta,k}^{r}$. 
\begin{lem}
Each function $f_{\alpha\beta,k}^{r}$ appearing in \prettyref{eq:expansion of hk coeff special frame}
can be written as a finite sum

\begin{equation}
f_{\alpha\beta,k}^{r}=\sum_{(d,I)}r^{d}x^{I}f_{d,I}\label{eq:schf}
\end{equation}
 for some functions $f_{d,I}$, independent of $r$. Moreover, each
multi-index $(d,I)\in\mathbb{N}_{0}\times\mathbb{N}_{0}^{n}$ appearing
in \prettyref{eq:schf} satisfies the inequality 
\begin{equation}
d\leq k+\frac{1}{2}|I|.\label{eq:main index inequality}
\end{equation}
\end{lem}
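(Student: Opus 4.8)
The plan is to prove this by induction on $k$, running the standard recursion for the heat coefficients and tracking at each step the joint dependence on the parameter $r$ and the normal coordinate $x$. Working in geodesic coordinates centred at $y$ and in the radial frame \eqref{eq:parallel transport frame}, recall (cf.\ Chapter~7 of \cite{Roe}) that the $b^r_k$ are determined along rays through the origin: $b^r_0=\Theta^{-1/2}$, where $\Theta$ is a Jacobian factor of $\exp_y$, a smooth function which is independent of $r$ (and of the connection, since in the radial gauge the radial covariant derivative is just $\partial_\rho$); and for $k\geq1$ the coefficient $b^r_k$ is a radial average of $D^2_{A_r}b^r_{k-1}$, schematically
\[
b^r_k(x)=-\,\rho^{-k}\!\int_0^{\rho}s^{k-1}\big[\Theta^{1/2}\,D_{A_r}^2\big(\Theta^{-1/2}b^r_{k-1}\big)\big](s\hat x)\,ds ,\qquad \rho=|x|,\ \hat x=x/\rho ,
\]
with $D_{A_r}^2$ written in coordinates and in the frame as in \eqref{eq:dssf}--\eqref{eq:ser}. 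It is convenient to organize the bookkeeping through a grading: put $\deg(\partial_{x_i})=1$, $\deg(x_i)=-1$, $\deg(r)=2$, and say a matrix-valued function near $0$ \emph{has weight $\leq w$} if it is a finite sum $\sum_{(d,I)}r^{d}x^{I}g_{d,I}$ with each $g_{d,I}$ smooth and $r$-independent and $2d-|I|\leq w$ for every term. Since $2d-|I|\leq 2k$ is the same as $d\leq k+\tfrac12|I|$, the lemma is precisely the statement that $b^r_k$ has weight $\leq 2k$ (whence each entry $f^r_{\alpha\beta,k}$ does too).

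The induction rests on two observations that I would check first. (i) Multiplication by a smooth $r$-independent function preserves ``weight $\leq w$'', and so does the radial average $\mathcal I_k:g\mapsto\rho^{-k}\int_0^{\rho}s^{k-1}g(s\hat x)\,ds$; indeed a change of variables gives $\mathcal I_k\big[r^{d}x^{I}g_{d,I}\big]=r^{d}x^{I}\widetilde g_{d,I}$ with $\widetilde g_{d,I}(x)=\int_0^1 u^{k-1+|I|}g_{d,I}(ux)\,du$ again smooth and $r$-independent, of the same bidegree. (ii) $D_{A_r}^2$ raises weight by at most $2$: with the above grading, multiplying by $x_i$ lowers the weight by $1$, by $r$ raises it by $2$, and $\partial_{x_i}$ raises it by at most $1$ (it either lowers $|I|$ by one or falls on a $g_{d,I}$, keeping the weight); and inspecting \eqref{eq:dssf}--\eqref{eq:ser} one sees that each monomial coefficient-operator $r^{c}x^{J}\partial^{\gamma}$ appearing in $D_{A_r}^2$ obeys $2c-|J|+|\gamma|\leq 2$ -- equality occurring for $-\partial_i^2$, for the magnetic terms $-r(da)_{ij}(0)x^{j}\partial_i$ and $-\tfrac{r^2}{4}x^{i}x^{j}\sum_k(da)_{ik}(0)(da)_{jk}(0)$, and for the degree-one-in-$r$ part of $c(F_{A_r})$ -- which is the assertion that $D_{A_r}^2$ has order $\leq 2$ for this Getzler-type rescaling grading.

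Granting (i) and (ii), the induction is automatic: $b^r_0=\Theta^{-1/2}$ is $r$-independent, so has weight $\leq 0$; and if $b^r_{k-1}$ has weight $\leq 2(k-1)$, then in the displayed recursion multiplication by $\Theta^{\mp1/2}$ keeps the weight, $D_{A_r}^2$ raises it to $\leq 2k$, and $\mathcal I_k$ keeps it, so $b^r_k$ has weight $\leq 2k$. Reading off the $(\alpha,\beta)$ entry in the frame \eqref{eq:parallel transport frame} then yields $f^r_{\alpha\beta,k}=\sum_{(d,I)}r^{d}x^{I}f_{d,I}$ with $d\leq k+\tfrac12|I|$; should one prefer the $f_{d,I}$ to be constants, one expands each smooth $g_{d,I}$ by Taylor's formula with integral remainder to the relevant order, the extra terms only increasing $|I|$ and so being harmless. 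The steps I expect to demand the most care are pinning down the exact transport recursion together with the $r$-independence of the Jacobian factors $\Theta^{\pm1/2}$ (this is the point of the radial gauge), and verifying the inequality $2c-|J|+|\gamma|\leq 2$ for every term of $D_{A_r}^2$; everything else is pure bookkeeping.
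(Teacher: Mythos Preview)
Your proof is correct and follows essentially the same approach as the paper: induction on $k$ using the transport recursion for the heat coefficients and the explicit form \eqref{eq:dssf}--\eqref{eq:ser} of $D_{A_r}^2$. The paper's proof is terser---it simply cites the recursion $b_0^r=g^{-1/4}$, $b_k^r=-g^{-1/4}\int_0^1\rho^{k-1}g^{1/4}(\rho x)D_{A_r}^2b_{k-1}^r(\rho x)\,d\rho$ and asserts the induction step from \eqref{eq:dssf}--\eqref{eq:ser}---whereas you have usefully repackaged the bookkeeping through the Getzler-type grading $\deg r=2$, $\deg x_i=-1$, $\deg\partial_i=1$, which makes the claim ``$D_{A_r}^2$ raises weight by at most $2$'' transparent and the radial-average step clean; but the two arguments are the same in substance.
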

\begin{proof}
The heat kernel coefficients $b_{k}^{r}(x,y)$ are given, as in chapter
7 of \cite{Roe}, by the recursion

\begin{eqnarray}
b_{0}^{r}(x,y) & = & \sum_{\alpha}g^{-1/4}(x)t_{\alpha\alpha},\label{eq:rec1}\\
b_{k}^{r}(x,y) & = & -\frac{1}{g^{1/4}(x)}\int_{0}^{1}\rho^{k-1}g^{1/4}(\rho x)D_{A_{r}}^{2}b_{k-1}(\rho x)d\rho,\quad\text{for}\quad k\geq1,\label{eq:rec2}
\end{eqnarray}
 where $g$ denotes the determinant of the metric on $Y$. Hence $b_{0}^{r}$
is clearly seen to be of the form \prettyref{eq:schf}. Equations
\prettyref{eq:dssf}-\prettyref{eq:ser} and \prettyref{eq:rec2}
imply that $b_{k}^{r}$ has the form \prettyref{eq:schf} assuming
it to be true for $b_{k-1}^{r}$. The lemma now follows by induction
on $k$.
\end{proof}
Following this we are ready to bound the pointwise trace $\textrm{tr}(D_{A_{r}}e^{-tD_{A_{r}}^{2}})$.
The above lemma will play an important role in the proposition below.
\begin{prop}
\label{prop:trace Dexp(td2) estimate}There exist constants $c_{1},c_{2}$,
independent of $r$, such that the pointwise trace $\textrm{tr}(D_{A_{r}}e^{-tD_{A_{r}}^{2}})$
satisfies the estimate

\begin{equation}
\norm{\textrm{tr}(D_{A_{r}}e^{-tD_{A_{r}}^{2}})}_{C^{0}}\leq c_{1}r^{\frac{n}{2}}e^{c_{2}rt},\label{eq:dtreb}
\end{equation}
 for all $t>0,\, r\geq1$.\end{prop}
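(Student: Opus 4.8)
The plan is to combine the small-time parametrix for $e^{-tD_{A_{r}}^{2}}$ with Duhamel's principle, the two quantitative inputs being the index inequality \prettyref{eq:main index inequality} and the vanishing \prettyref{eq:trace of coeff. L_t vanish}. Since $x\mapsto\textrm{tr}(L_{t}^{r}(x,x))$ is continuous on the compact $Y$ it is enough to estimate it at a fixed $x$, working in geodesic normal coordinates centred there, with all constants uniform by compactness. It is also convenient to fix a constant $t_{0}>0$ and treat $t\geq t_{0}$ separately: writing $D_{A_{r}}e^{-tD_{A_{r}}^{2}}=\bigl(D_{A_{r}}e^{-\frac{t_{0}}{2}D_{A_{r}}^{2}}\bigr)e^{-(t-\frac{t_{0}}{2})D_{A_{r}}^{2}}$, bounding the kernel of the first factor at the \emph{fixed} time $t_{0}/2$ by a polynomial in $r$ (which the parametrix provides), and controlling the second factor by \prettyref{prop:Maximum Principle estimate}, one gets $\norm{\textrm{tr}(L_{t}^{r}(x,x))}_{C^{0}}\leq Ce^{c_{3}rt}$ for $t\geq t_{0}$; since $r\geq1$ this is $\leq r^{\frac n2}e^{c_{2}rt}$ once $c_{2}$ is taken large enough. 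So everything takes place for $0<t\leq t_{0}$.

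\emph{Main term.} For $0<t\leq t_{0}$ write $H_{t}^{r}=K_{t}^{r}+R_{N,t}^{r}$ with parametrix $K_{t}^{r}=\chi h_{t}\sum_{k=0}^{N-1}b_{k}^{r}t^{k}$, $N$ a large integer depending only on $n$, so that $L_{t}^{r}=D_{A_{r}}K_{t}^{r}+D_{A_{r}}R_{N,t}^{r}$. On the diagonal the $c(d\chi)$ and $c(-\rho d\rho/2t)$ terms in \prettyref{eq:dtre} vanish (as $\chi\equiv1$ and $\rho=0$ there), so $\textrm{tr}(D_{A_{r}}K_{t}^{r}(x,x))=\sum_{k}h_{t}(x,x)\textrm{tr}(D_{A_{r}}b_{k}^{r}(x,x))t^{k}$; by \prettyref{eq:trace of coeff. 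L_t vanish} only $k\geq(n+1)/2$ contributes, while \prettyref{eq:main index inequality} evaluated at the coordinate origin (where the connection coefficients $A_{i}^{r}$ and the spin Christoffel symbols vanish) gives $|\textrm{tr}(D_{A_{r}}b_{k}^{r}(x,x))|\leq Cr^{k}$. Hence each surviving term is at most $h_{t}(x,x)r^{k}t^{k}=(rt)^{k-n/2}r^{\frac n2}\leq Ce^{c_{2}rt}r^{\frac n2}$, using $k-n/2\geq\frac12$, and the finite sum is $\leq Cr^{\frac n2}e^{c_{2}rt}$.

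\emph{Remainder.} For $D_{A_{r}}R_{N,t}^{r}$, Duhamel's principle and the fact that $D_{A_{r}}$ commutes with $e^{-uD_{A_{r}}^{2}}$ give $D_{A_{r}}R_{N,t}^{r}=-\int_{0}^{t}e^{-(t-s)D_{A_{r}}^{2}}\bigl(D_{A_{r}}F_{s}^{r}\bigr)ds$, where $F_{s}^{r}=(\partial_{s}+D_{A_{r}}^{2})K_{s}^{r}=\chi h_{s}s^{N-1}(D_{A_{r}}^{2}b_{N-1}^{r})+G_{s}^{r}$ and $G_{s}^{r}$ is supported in $\rho\geq i_{g}/2$. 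One estimates the kernel of the right-hand side on the diagonal using \prettyref{prop:Maximum Principle estimate} for $H_{t-s}^{r}$, the index inequality for $\nabla^{r}D_{A_{r}}^{2}b_{N-1}^{r}$, and the Gaussian estimates \prettyref{eq:ehi1}--\prettyref{eq:ehi2}; this yields a finite sum of terms of type $r^{d}t^{m}e^{c_{2}rt}$ which, for $N$ large, is $\leq r^{\frac n2}e^{c_{2}rt}$. The contribution of $G_{s}^{r}$ and of the $c(d\chi)$ terms carries a factor $e^{-i_{g}^{2}/ct}$, which kills the polynomial growth in $r$ for $t\lesssim(\log r)/r$ while the factor $e^{c_{2}rt}$ handles $t\gtrsim(\log r)/r$, provided $c_{2}$ is chosen large enough.

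\emph{Main obstacle.} The delicate point is that a naive estimate of the endomorphism norm $\norm{L_{t}^{r}(x,x)}$, without using the trace, only yields $r^{(n+1)/2}e^{c_{2}rt}$: applying $D_{A_{r}}$ worsens the index inequality by $\frac12$ (the source $D_{A_{r}}F_{s}^{r}$ satisfies $d\leq N+\frac12+\frac12|I|$ in place of \prettyref{eq:main index inequality}), and this half-order appears in the Duhamel estimate as an extra factor $r^{1/2}$ — consistent with $\norm{D_{A_{r}}e^{-uD_{A_{r}}^{2}}}\sim u^{-1/2}$ being of genuine size $r^{1/2}$ at the critical scale $u\sim1/r$, and with $\norm{L_{t}^{r}(x,x)}$ itself blowing up like $t^{1-n/2}$ as $t\to0$. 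Recovering the stated $r^{\frac n2}$ therefore forces one to keep the trace throughout: the terms carrying the extra $r^{1/2}$ come with an odd monomial $z^{I}$, and, the surrounding Gaussians being even, their trace against the heat kernel on the diagonal gains a compensating half power of $t$ (equivalently, \prettyref{eq:trace of coeff. L_t vanish} propagates to $\textrm{tr}(D_{A_{r}}R_{N,t}^{r}(x,x))=O(t^{N-n/2})$ with the same power of $r$ as the undifferentiated remainder). Carrying this parity/trace cancellation through the Duhamel iteration, rather than estimating operator norms, is the heart of the proof.
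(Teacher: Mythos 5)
Your outline matches the paper's skeleton — small‑time parametrix, Duhamel, the vanishing \prettyref{eq:trace of coeff. L_t vanish} and the index inequality \prettyref{eq:main index inequality}, and you correctly identify that the naive Duhamel estimate only gives $r^{(n+1)/2}$ and that the missing half power must come from a parity/trace cancellation. But there is a genuine gap in how you propose to realize that cancellation, and it is precisely the point where the paper does the most work.

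You write that in the Duhamel term the odd monomials $x^{I}$ from the source $D_{A_{r}}F_{s}^{r}$ cancel because ``the surrounding Gaussians being even, their trace against the heat kernel on the diagonal gains a compensating half power of $t$.'' The problem is that the object you are pairing against is $H_{t-s}^{r}(y,x)$, and the full heat kernel is \emph{not} an even function of $x$ in geodesic coordinates centred at $y$ — only its Euclidean/Mehler model is. The only input about $H_{t-s}^{r}$ you invoke is the maximum‑principle bound \prettyref{eq:mest}, which controls its size but carries no parity information whatsoever, so the odd‑function integral you want to kill does not visibly vanish. This is why the paper introduces the Mehler kernel $M_{t}^{r}$ in \prettyref{eq:Mehler's kernel def} and proves the refined comparison \prettyref{prop:Heat Mehler comparison}, $|H_{t}^{r}-M_{t}^{r}|\leq c_{1}h_{8t}t^{1/2}e^{c_{2}rt}$. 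In the paper's proof the Duhamel integrand is split as $E_{t}=U_{t}+V_{t}$ by replacing $H_{t-s}^{r}$ with $(H_{t-s}^{r}-M_{t-s}^{r})+M_{t-s}^{r}$: for $U_{t}$ the extra $t^{1/2}$ in \prettyref{eq:mhest} supplies the missing half power directly, with no parity needed; for $V_{t}$ the Mehler kernel \emph{is} even, so there the parity cancellation you describe is legitimate (this is the $\bar{f}^{r,1},\bar{f}^{r,2}$ analysis with $\bar{S}_1,\bar{S}_2$). Your proposal collapses these two mechanisms into one and applies the parity argument to the wrong kernel, so as written the remainder estimate is not justified. The fix is exactly the Mehler comparison, which your proposal never uses.

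Two smaller remarks. First, the cutoff term $F_{t}^{r,\frac{n-1}{2}}$ (your $G_{s}^{r}$) is handled in the paper by the observation that $d\chi$‑supported coefficients vanish to infinite order at the diagonal, so the monomial degree $|I|$ can be taken as large as needed; invoking a uniform $e^{-i_{g}^{2}/ct}$ factor and splitting the $t$‑range at $(\log r)/r$ also works, it is just a different bookkeeping. Second, your bound on the surviving parametrix terms, $|\textrm{tr}(D_{A_{r}}b_{k}^{r}(x,x))|\leq Cr^{k}$, is correct but for a reason worth stating: applying $D_{A_{r}}$ and evaluating at the origin raises the bound on $d$ by at most $\tfrac12$, and since $d\in\mathbb{N}_{0}$ the constraint $d\leq k+\tfrac12$ already forces $d\leq k$; without that integrality remark the reader would expect $r^{k+1/2}$.
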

\begin{proof}
Consider the remainder obtained after subtracting the first $\frac{n-1}{2}$
terms of the kernel expansion \prettyref{eq:dtre}

\begin{equation}
L_{t}^{r,\frac{n-1}{2}}=L_{t}^{r}-D_{A_{r}}(\chi h_{t}(b_{0}^{r}(x,y)+\ldots+t^{\frac{n-1}{2}}b_{\frac{n-1}{2}}^{r})).\label{eq:def remainder D(heat trace exp)}
\end{equation}
 From \prettyref{eq:dtre} and \prettyref{eq:trace of coeff. L_t vanish}
we see that 
\begin{align}
\textrm{tr}(L_{t}^{r})= & \textrm{tr}(L_{t}^{r,\frac{n-1}{2}}),\label{eq:tr(L_t)=00003Dtr(L_t^{(n-1)/2})}
\end{align}
and it hence suffices to bound $L_{t}^{r,\frac{n-1}{2}}$. We clearly
have $L_{t}^{r,\frac{n-1}{2}}=D_{A_{r}}H_{t}^{r,\frac{n-1}{2}}$ with
$H_{t}^{r,\frac{n-1}{2}}$ being the analogous remainder in the kernel
expansion for the heat trace

\begin{equation}
H_{t}^{r,\frac{n-1}{2}}=H_{t}^{r}-\chi h_{t}(b_{0}^{r}(x,y)+\ldots+t^{\frac{n-1}{2}}b_{\frac{n-1}{2}}^{r}).
\end{equation}
 Let us denote 
\begin{equation}
S_{t}^{r,\frac{n-1}{2}}=h_{t}(b_{0}^{r}(x,y)+\ldots+t^{\frac{n-1}{2}}b_{\frac{n-1}{2}}^{r}).\label{eq:formula for S_t}
\end{equation}
 The result of applying the heat operator to \prettyref{eq:def remainder D(heat trace exp)}
is then 
\begin{align}
(\partial_{t}+D_{A_{r}}^{2})(L_{t}^{r,\frac{n-1}{2}})= & Q_{t}^{r,\frac{n-1}{2}}+R_{t}^{r,\frac{n-1}{2}},\quad\textrm{where}\label{eq:heat operator applied to L}\\
Q_{t}^{r,\frac{n-1}{2}}= & -\chi(\partial_{t}+D_{A_{r}}^{2})D_{A_{r}}S_{t}^{r,\frac{n-1}{2}}\quad\textrm{and}\label{eq:formula for Q_t}\\
R_{t}^{r,\frac{n-1}{2}}= & -D_{A_{r}}^{2}\left\{ c(d\chi)S_{t}^{r,\frac{n-1}{2}}\right\} -D_{A_{r}}\left\{ c(d\chi)D_{A_{r}}S_{t}^{r,\frac{n-1}{2}}\right\} -\left\{ c(d\chi)D_{A_{r}}^{2}S_{t}^{r,\frac{n-1}{2}}\right\} .\label{eq:formula for R_t}
\end{align}
In other words, $R_{t}^{r,\frac{n-1}{2}}$ is the sum of the terms
obtained when some derivative differentiates the cutoff function $\chi$
in \prettyref{eq:def remainder D(heat trace exp)}. Now since $L_{t}^{r,\frac{n-1}{2}}\rightarrow0$
as $t\rightarrow0$, Duhamel's principle applied to \prettyref{eq:heat operator applied to L}
gives 
\begin{align}
L_{t}^{r,\frac{n-1}{2}}= & E_{t}^{r,\frac{n-1}{2}}+F_{t}^{r,\frac{n-1}{2}},\quad\textrm{where}\label{eq:L=00003DE+F}\\
E_{t}^{r,\frac{n-1}{2}}(z,y)= & \int_{0}^{t}ds\left(\int_{Y}dx\, H_{t-s}^{r}(z,x)Q_{s}^{r,\frac{n-1}{2}}(x,y)\right)\quad\textrm{and}\label{eq:formula E}\\
F_{t}^{r,\frac{n-1}{2}}(z,y)= & \int_{0}^{t}ds\left(\int_{Y}dx\, H_{t-s}^{r}(z,x)R_{s}^{r,\frac{n-1}{2}}(x,y)\right).\label{eq:formula F}
\end{align}
We first bound $F_{t}^{r,\frac{n-1}{2}}$, again working in geodesic
coordinates and the frame \prettyref{eq:parallel transport frame}.
Using \prettyref{eq:cdic} and the fact that Clifford multiplication
is parallel, the Dirac operator is seen to be of the form

\begin{equation}
D_{A}=A_{i}\partial_{i}+rx_{i}B_{i}+C,\label{eq:dic}
\end{equation}
where $A_{i}$, $B_{i}$ and $C$ are endomorphisms of $S\otimes L$
independent of $r$. Using \prettyref{eq:expansion of hk coeff special frame},\prettyref{eq:formula for S_t},\prettyref{eq:formula for R_t}
and \prettyref{eq:dic} we may write 
\begin{align}
R_{t}^{r,\frac{n-1}{2}}= & h_{t}\sum_{k=0}^{\frac{n-1}{2}}t^{k}\left(\sum f_{\alpha\beta,k}^{r,0}t_{\alpha\beta}\right)\label{eq:expansion R orth frame}
\end{align}
in the frame \prettyref{eq:parallel transport frame} for some coefficient
functions $f_{\alpha\beta}^{r,0}$. By \prettyref{eq:schf} each $f_{\alpha\beta}^{r,0}$
can be written as a finite sum
\begin{equation}
f_{\alpha\beta}^{r,0}=\sum_{(d,I)}r^{d}x^{I}f_{d,I}^{0}\label{eq:f0 sch form}
\end{equation}
for some functions $f_{d,I}^{0}$ independent of $r.$ Since $d\chi=0$
in a neighborhood of the diagonal, \prettyref{eq:formula for R_t}
implies each $f_{\alpha\beta}^{r,0}$ vanishes to infinite order near
the diagonal. Hence by \prettyref{eq:main index inequality} we may
assume that each multi-index $(d,I)$ in \prettyref{eq:f0 sch form}
satisfies $d\leq k+\frac{1}{2}|I|+1$. We now substitute \prettyref{eq:expansion R orth frame}
and \prettyref{eq:f0 sch form} in \prettyref{eq:formula F}. Using
this substitution along with the heat kernel bound \prettyref{eq:mest},
we may bound $|F_{t}^{r,\frac{n-1}{2}}(y,y)|$ by a sum of terms of
the form \prettyref{eq:generic form integral} each satisfying $d\leq k+\frac{1}{2}|I|+1$.
The inequalities \prettyref{eq:ehi1}-\prettyref{eq:ehi2} then give
the estimate
\begin{equation}
|F_{t}^{r,\frac{n-1}{2}}(y,y)|\leq c_{1}r^{\frac{n}{2}}e^{c_{2}rt}.\label{eq:F estimate}
\end{equation}

Next we estimate $E_{t}^{r,\frac{n-1}{2}}$. Following \prettyref{eq:formula for S_t}
and using the cancellations in the kernel expansion, resulting from
the transport equation, we see that
\begin{eqnarray}
Q_{t}^{r,\frac{n-1}{2}} & = & -\chi(\partial_{t}+D_{A_{r}}^{2})D_{A_{r}}S_{t}^{r,\frac{n-1}{2}}\\
 & = & \chi h_{t}t^{\frac{n-1}{2}}\left\{ -D_{A_{r}}^{3}b_{\frac{n-1}{2}}^{r}+c\left(\frac{\rho d\rho}{2t}\right)D_{A_{r}}^{2}b_{\frac{n-1}{2}}^{r}\right\} .
\end{eqnarray}
Equation \prettyref{eq:formula E} now gives 
\begin{multline}
E_{t}^{r,\frac{n-1}{2}}(y,y)=\int_{0}^{t}ds\Biggl(\int_{Y}dx\, H_{t-s}^{r}(y,x)\chi h_{s}(x,y)s^{\frac{n-1}{2}}\biggl\{-D_{A_{r}}^{3}b_{\frac{n-1}{2}}^{r}(x,y)\\
+c\left(\frac{\rho d\rho}{2s}\right)D_{A_{r}}^{2}b_{\frac{n-1}{2}}^{r}(x,y)\biggl\}\Biggl).\label{eq:rduha}
\end{multline}
We denote by $U_{t}^{r,\frac{n-1}{2}}$ and $V_{t}^{r,\frac{n-1}{2}}$
the kernels obtained by replacing $H_{t-s}^{r}$ in \prettyref{eq:rduha}
by $(H_{t-s}^{r}-M_{t-s}^{r})$ and $M_{t-s}^{r}$ respectively
\begin{multline}
U_{t}^{r,\frac{n-1}{2}}(y,y)=\int_{0}^{t}ds\Biggl(\int_{Y}dx\,\left(H_{t-s}^{r}(y,x)-M_{t-s}^{r}(y,x)\right)\chi h_{s}(x,y)s^{\frac{n-1}{2}}\biggl\{-D_{A_{r}}^{3}b_{\frac{n-1}{2}}^{r}(x,y)\\
+c\left(\frac{\rho d\rho}{2s}\right)D_{A_{r}}^{2}b_{\frac{n-1}{2}}^{r}(x,y)\biggl\}\Biggl),\label{eq:rduha U}
\end{multline}
\begin{multline}
V_{t}^{r,\frac{n-1}{2}}(y,y)=\int_{0}^{t}ds\Biggl(\int_{Y}dx\, M_{t-s}^{r}(y,x)\chi h_{s}(x,y)s^{\frac{n-1}{2}}\biggl\{-D_{A_{r}}^{3}b_{\frac{n-1}{2}}^{r}(x,y)\qquad\qquad\qquad\\
+c\left(\frac{\rho d\rho}{2s}\right)D_{A_{r}}^{2}b_{\frac{n-1}{2}}^{r}(x,y)\biggl\}\Biggl).\label{eq:rduha V}
\end{multline}
It is clear that 
\begin{align}
E_{t}^{r,\frac{n-1}{2}}= & U_{t}^{r,\frac{n-1}{2}}+V_{t}^{r,\frac{n-1}{2}}.\label{eq:E=00003DU+V}
\end{align}

We first bound $U_{t}^{r,\frac{n-1}{2}}$, again working in geodesic
coordinates and the frame \prettyref{eq:parallel transport frame}.
In terms of the orthonormal frame we may write

\begin{equation}
D_{A_{r}}^{3}b_{\frac{n-1}{2}}^{r}=\sum f_{\alpha\beta}^{r,1}t_{\alpha\beta},\quad D_{A_{r}}^{2}b_{\frac{n-1}{2}}^{r}=\sum f_{\alpha\beta}^{r,2}t_{\alpha\beta}\label{eq:dschf}
\end{equation}
 for some coefficient functions $f_{\alpha\beta}^{r,1}$ and $f_{\alpha\beta}^{r,2}$.
Using \prettyref{eq:schf} and \prettyref{eq:dic} these can be expressed
as finite sums 
\begin{align}
f_{\alpha\beta}^{r,1}=\sum_{(d,I)\in S_{1}}x^{I}r^{d}f_{d,I}^{1},\label{eq:schematic form coeffs 1}\\
f_{\alpha\beta}^{r,2}=\sum_{(d,I)\in S_{2}}x^{I}r^{d}f_{d,I}^{2},\label{eq:schematic form coeffs 2}
\end{align}
where $f_{d,I}^{1},f_{d,I}^{2}$ are functions independent of $r$
and $S_{1},S_{2}$ are finite subsets of $\mathbb{N}_{0}\times\mathbb{N}_{0}^{n}$.
Moreover, \prettyref{eq:main index inequality} now gives
\begin{align}
d\leq\frac{n-1}{2}+\frac{1}{2}|I|+\frac{3}{2} & \qquad\forall(d,I)\in S_{1}\quad\text{and}\label{eq:inex1}\\
d\leq\frac{n-1}{2}+\frac{1}{2}|I|+1 & \qquad\forall(d,I)\in S_{2}.\label{eq:inex2}
\end{align}
Again we substitute \prettyref{eq:dschf}-\prettyref{eq:schematic form coeffs 2}
into \prettyref{eq:rduha U}. This substitution, the bound \prettyref{eq:mhest},
along with the inequalities \prettyref{eq:ehi1}-\prettyref{eq:ehi2}
and \prettyref{eq:schematic form coeffs 1}-\prettyref{eq:schematic form coeffs 2}
now give the estimate

\begin{equation}
|U_{t}^{r,\frac{n-1}{2}}(y,y)|\leq c_{3}r^{\frac{n}{2}}e^{c_{4}rt}.\label{eq:uest}
\end{equation}

Next we estimate $V_{t}^{r,\frac{n-1}{2}}$. First we use a Taylor
expansion to write

\begin{equation}
f_{d,I}^{1}=g_{d,I}^{1}+x_{i}h_{d,I,i}^{1}\quad\text{and}\quad f_{d,I}^{2}=g_{d,I}^{2}+x_{i}h_{d,I,i}^{2}\label{eq:Taylor expansion odd even}
\end{equation}
 where each of $g_{d,I}^{1}$ and $g_{d,I}^{2}$ is an even function
in these coordinates. We now let 
\begin{align}
\bar{S_{1}}= & \left\{ (d,I)\in S_{1}|d=\frac{n-1}{2}+\frac{1}{2}|I|+\frac{3}{2}\right\} ,\label{eq:S1 bar}\\
\bar{S_{2}}= & \left\{ (d,I)\in S_{2}|d=\frac{n-1}{2}+\frac{1}{2}|I|+1\right\} ,\label{eq:S2 bar}
\end{align}
and define 
\begin{align}
\bar{f}_{\alpha\beta}^{r,1}=\sum_{(d,I)\in\bar{S}_{1}}x^{I}r^{d}g_{d,I}^{1},\quad & \tilde{f}_{\alpha\beta}^{r,1}=\sum_{(d,I)\in S_{1}\setminus\bar{S}_{1}}x^{I}r^{d}g_{d,I}^{1}+\sum_{(d,I)\in S_{1}}x^{I}r^{d}\left(x_{i}h_{d,I,i}^{1}\right),\label{eq:splitting f_ab 1}\\
\bar{f}_{\alpha\beta}^{r,2}=\sum_{(d,I)\in\bar{S}_{2}}x^{I}r^{d}g_{d,I}^{2},\quad & \tilde{f}_{\alpha\beta}^{r,2}=\sum_{(d,I)\in S_{2}\setminus\bar{S}_{2}}x^{I}r^{d}g_{d,I}^{2}+\sum_{(d,I)\in S_{2}}x^{I}r^{d}\left(x_{i}h_{d,I,i}^{2}\right).\label{eq:splitting f_ab 2}
\end{align}
Clearly, by \prettyref{eq:schematic form coeffs 1}-\prettyref{eq:schematic form coeffs 2}
and \prettyref{eq:Taylor expansion odd even}-\prettyref{eq:splitting f_ab 2},
\begin{align*}
f_{\alpha\beta}^{r,1}=\bar{f}_{\alpha\beta}^{r,1}+\tilde{f}_{\alpha\beta}^{r,1}\quad\textrm{and}\quad & f_{\alpha\beta}^{r,2}=\bar{f}_{\alpha\beta}^{r,2}+\tilde{f}_{\alpha\beta}^{r,2}.
\end{align*}

Next we claim that the contribution of $\bar{f}_{\alpha\beta}^{r,1}$
to $V_{t}^{r,\frac{n-1}{2}}$ is zero. To see this, first observe
that $(d,I)\in\bar{S}_{1}$ implies $|I|$ is odd by \prettyref{eq:S1 bar}.
Hence $\bar{f}_{\alpha\beta}^{r,1}$ is an odd function, using \prettyref{eq:splitting f_ab 1}
and the fact that $g_{d,I}^{1}$ is even. Hence the integral corresponding
to $\bar{f}_{\alpha\beta}^{r,1}$ in \prettyref{eq:rduha V} is zero,
being the integral of an odd function in these coordinates. Similarly,
we claim that the contribution of $\bar{f}_{\alpha\beta}^{r,2}$ to
$V_{t}^{r,\frac{n-1}{2}}$ is zero. This time, $(d,I)\in\bar{S}_{2}$
implies $|I|$ is even by \prettyref{eq:S2 bar}. Hence $\bar{f}_{\alpha\beta}^{2}$
is an even function using \prettyref{eq:splitting f_ab 2}. However
the integral corresponding to $\bar{f}_{\alpha\beta}^{r,2}$ in \prettyref{eq:rduha V}
is still the integral of an odd function in these coordinates, because
of the $c\left(\frac{\rho d\rho}{2s}\right)$ term in \prettyref{eq:rduha V}. 

Following this the contribution of $\tilde{f}_{\alpha\beta}^{r,1}$
to $V_{t}^{r,\frac{n-1}{2}}(y,y)$ can be bounded by a finite sum
of terms of the form 
\begin{align*}
c_{1}e^{c_{2}rt}\int_{0}^{t}ds\left(\int_{Y}h_{2\left(t-s\right)}(y,x)s^{\frac{n-1}{2}}r^{d}\rho(x,y)^{I}h_{s}(x,y)dx\right), & \quad\textrm{with}\quad d\leq\frac{n-1}{2}+\frac{1}{2}|I|+1.
\end{align*}
Again using the inequalities \prettyref{eq:ehi1}-\prettyref{eq:ehi2},
and estimating the contribution of $\tilde{f}_{\alpha\beta}^{r,2}$
in similar fashion, gives the estimate

\begin{equation}
|V_{t}^{\frac{n-1}{2}}(y,y)|\leq c_{5}r^{\frac{n}{2}}e^{c_{6}rt}.\label{eq:vest}
\end{equation}
 Following \prettyref{eq:uest}, \prettyref{eq:vest} and \prettyref{eq:E=00003DU+V}
we obtain the estimate

\begin{equation}
|E_{t}^{\frac{n-1}{2}}(y,y)|\leq c_{7}r^{\frac{n}{2}}e^{c_{8}rt},\label{eq:E est}
\end{equation}
 for constants $c_{5}$ and $c_{6}$ independent of $r$. Equations
\prettyref{eq:L=00003DE+F}, \prettyref{eq:F estimate} and \prettyref{eq:E est}
then give 
\begin{equation}
|L_{t}^{\frac{n-1}{2}}(y,y)|\leq c_{9}r^{\frac{n}{2}}e^{c_{10}rt},\label{eq:L est}
\end{equation}
for constants $c_{9}$ and $c_{10}$ independent of $r$. The proposition
now follows from \prettyref{eq:tr(L_t)=00003Dtr(L_t^{(n-1)/2})} and
\prettyref{eq:L est}.
\end{proof}

\section{Asymptotics of the spectral measure\label{sec:Asymptotics-}}

We now consider the rescaled operator $D_{r}=\frac{1}{\sqrt{r}}D_{A_{r}}$.
Here we shall find the asymptotics of its spectral measure using the
heat trace estimates of the previous section. We first consider the
heat traces of $D_{r}$. 
\begin{thm}
For any $t>0$,

\begin{eqnarray}
\,\lim_{r\rightarrow\infty}r^{-\frac{n}{2}}\textrm{tr}(e^{-tD_{r}^{2}}) & =\frac{1}{\left(4\pi t\right)^{\frac{n}{2}}}\int_{Y}\det{}^{\frac{1}{2}}\left(\frac{tA_{y}}{\tanh tA_{y}}\right)dy & \quad\textrm{and}\label{eq:rescaled heat trace limit}\\
\lim_{r\rightarrow\infty}r^{-n/2}\textrm{tr}\left(D_{r}e^{-tD_{r}^{2}}\right) & = & 0,\label{eq:rescaled tr(Dexp(tD2)) estimate}
\end{eqnarray}
where $A$ is as defined by \prettyref{eq:antisymm endomorphism A}.
The convergences above are uniform in compact intervals of $t\in\mathbb{R}_{>0},\mathbb{R}_{\geq0}$
respectively.\end{thm}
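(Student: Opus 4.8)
The plan is to observe that passing from $D_{A_{r}}$ to the rescaled operator $D_{r}=r^{-1/2}D_{A_{r}}$ is the same as rescaling time: since $D_{r}^{2}=\tfrac{1}{r}D_{A_{r}}^{2}$ we have $e^{-tD_{r}^{2}}=e^{-(t/r)D_{A_{r}}^{2}}$ and $D_{r}e^{-tD_{r}^{2}}=r^{-1/2}D_{A_{r}}e^{-(t/r)D_{A_{r}}^{2}}$. Writing the traces as integrals over $Y$ of the pointwise traces of the corresponding Schwartz kernels along the diagonal (and using that $L$ is a line bundle), we get
\begin{align*}
r^{-\frac{n}{2}}\textrm{tr}\bigl(e^{-tD_{r}^{2}}\bigr) & =\int_{Y}r^{-\frac{n}{2}}\textrm{tr}\bigl(H_{t/r}^{r}(y,y)\bigr)\,dy,\\
r^{-\frac{n}{2}}\textrm{tr}\bigl(D_{r}e^{-tD_{r}^{2}}\bigr) & =r^{-\frac{n+1}{2}}\int_{Y}\textrm{tr}\bigl(L_{t/r}^{r}(y,y)\bigr)\,dy.
\end{align*}
The second limit \prettyref{eq:rescaled tr(Dexp(tD2)) estimate} is then immediate from \prettyref{prop:trace Dexp(td2) estimate}: substituting $t\mapsto t/r$ into \prettyref{eq:dtreb} gives $\norm{\textrm{tr}(D_{A_{r}}e^{-(t/r)D_{A_{r}}^{2}})}_{C^{0}}\leq c_{1}r^{\frac{n}{2}}e^{c_{2}t}$, so $r^{-\frac{n}{2}}|\textrm{tr}(D_{r}e^{-tD_{r}^{2}})|\leq c_{1}\,\textrm{vol}(Y)\,e^{c_{2}t}\,r^{-\frac{1}{2}}\to 0$, and the bound is uniform for $t$ in compact subsets of $\mathbb{R}_{\geq0}$ (at $t=0$ the pointwise trace vanishes by \prettyref{eq:pointwise trace asymp as t->0}). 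Thus only the limit \prettyref{eq:rescaled heat trace limit} requires genuine work.

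For that, the first step is to trade the heat kernel for Mehler's kernel on the diagonal. Since $h_{8t/r}(y,y)=(32\pi t/r)^{-n/2}$, \prettyref{prop:Heat Mehler comparison} gives
\[
r^{-\frac{n}{2}}\bigl|H_{t/r}^{r}(y,y)-M_{t/r}^{r}(y,y)\bigr|\leq c_{1}(32\pi)^{-\frac{n}{2}}\,t^{\frac{1-n}{2}}e^{c_{2}t}\,r^{-\frac{1}{2}},
\]
which tends to $0$ as $r\to\infty$, uniformly in $y\in Y$ and uniformly for $t$ in compact subsets of $\mathbb{R}_{>0}$. As $Y$ is compact, it therefore suffices to compute $\lim_{r\to\infty}r^{-\frac{n}{2}}\int_{Y}\textrm{tr}\bigl(M_{t/r}^{r}(y,y)\bigr)\,dy$.

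To do this, note that on the diagonal the defining formulas \prettyref{eq:function m_t in Mehler's kernel}--\prettyref{eq:Mehler's kernel def} give $M_{t/r}^{r}(y,y)=m_{t/r}^{r}(y,y)\,e^{-(t/r)c(F_{A_{r}})}$ with $m_{t/r}^{r}(y,y)=(4\pi t/r)^{-n/2}\det{}^{\frac{1}{2}}\bigl(\tfrac{tA_{y}}{\sinh tA_{y}}\bigr)$, hence
\[
r^{-\frac{n}{2}}M_{t/r}^{r}(y,y)=\frac{1}{(4\pi t)^{\frac{n}{2}}}\det{}^{\frac{1}{2}}\!\Bigl(\frac{tA_{y}}{\sinh tA_{y}}\Bigr)e^{-\frac{t}{r}c(F_{A_{r}})}.
\]
Because $F_{A_{r}}=F_{A_{0}}+r\,da$, one has $\tfrac{t}{r}c(F_{A_{r}})=\tfrac{t}{r}c(F_{A_{0}})+t\,c(da)\to t\,c(da)$ in $\textrm{End}(S_{y})$ as $r\to\infty$, uniformly in $y$ (since $F_{A_{0}}$ is fixed and $Y$ is compact), so $r^{-\frac{n}{2}}M_{t/r}^{r}(y,y)$ converges uniformly in $y$ to $(4\pi t)^{-n/2}\det^{1/2}\bigl(\tfrac{tA_{y}}{\sinh tA_{y}}\bigr)e^{-tc(da)_{y}}$. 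Taking the pointwise trace over $S$ and diagonalizing the antisymmetric endomorphism $A_{y}$ of \prettyref{eq:antisymm endomorphism A} in an oriented orthonormal frame, the trace $\textrm{tr}\,e^{-tc(da)_{y}}$ factors as a product over the invariant two-planes, and pairing it with the determinant is exactly the bosonic--fermionic cancellation underlying the local index theorem (cf. \S4.2 of \cite{Berline-Getzler-Vergne}); this produces the identity
\[
\textrm{tr}\Bigl(\det{}^{\frac{1}{2}}\!\bigl(\tfrac{tA_{y}}{\sinh tA_{y}}\bigr)e^{-tc(da)_{y}}\Bigr)=\det{}^{\frac{1}{2}}\!\Bigl(\frac{tA_{y}}{\tanh tA_{y}}\Bigr).
\]
Integrating this uniform limit over the compact manifold $Y$ then gives \prettyref{eq:rescaled heat trace limit}, and the uniformity of the convergence on compact subsets of $\mathbb{R}_{>0}$ follows because every estimate above is locally uniform in $t$.

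The heavy lifting is already contained in \prettyref{prop:Heat Mehler comparison} and \prettyref{prop:trace Dexp(td2) estimate}, so I do not expect a serious obstacle here; the one point requiring care is the final Clifford-algebra identity together with the bookkeeping of its normalization, while everything else is the substitution $t\mapsto t/r$ combined with compactness of $Y$ to upgrade the pointwise bounds of Section~\ref{sec:Asymptotics-of-the} to uniform ones and thereby let the limit pass through the integral.
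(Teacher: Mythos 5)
Your proposal is correct and follows essentially the same route as the paper's own proof: the substitution $t\mapsto t/r$, the replacement of $H_{t/r}^r$ by the Mehler kernel $M_{t/r}^r$ via Proposition \ref{prop:Heat Mehler comparison}, the diagonalization of $A_y$ and the Clifford-trace identity to obtain $\det^{1/2}(tA_y/\tanh tA_y)$, and Proposition \ref{prop:trace Dexp(td2) estimate} for the vanishing of the second limit. The only cosmetic difference is that you phrase the Mehler-kernel computation as a pointwise uniform limit of $r^{-n/2}M_{t/r}^r(y,y)$ rather than the paper's ``highest order part in $r$'' language, but the content is identical.
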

\begin{proof}
If $H'_{t}$ denotes the kernel of $e^{-tD_{r}^{2}}$ it is clear
that $H'_{t}=H_{\frac{t}{r}}$ after rescaling. Hence proposition
\prettyref{prop:Heat Mehler comparison} gives the estimate
\begin{equation}
|H'_{t}-M_{\frac{t}{r}}|\leq c_{1}h_{\frac{8t}{r}}t^{\frac{1}{2}}r^{-\frac{1}{2}}e^{c_{2}t}\label{eq:rescaled heat kernel bound}
\end{equation}
for some constants $c_{1,}c_{2}$ independent of $r$. Hence $\textrm{tr}(e^{-tD_{r}^{2}})-\textrm{tr}(M_{\frac{t}{r}})=O(r^{\frac{n-1}{2}}),\,\forall t>0$.
It now remains to compute $\textrm{tr}(M_{\frac{t}{r}})$ in order
to prove \prettyref{eq:rescaled heat trace limit}. By \prettyref{eq:Mehler's kernel def},
the highest order part in $r$ of $\textrm{tr}(M_{\frac{t}{r}})$
is given by
\[
\frac{1}{(4\pi t)^{\frac{n}{2}}}\int_{Y}\det{}^{\frac{1}{2}}\left(\frac{tA_{y}}{\sinh tA_{y}}\right)\textrm{tr}\left(e^{-tc(da)}\right)dy.
\]
Since $A$ is an antisymmetric endomorphism it maybe diagonalized
to give an orthonormal basis $e,e_{1},f_{1},\ldots,e_{m},f_{m}\;$
of $T_{y}Y$ with eigenvalues $0,\pm\lambda_{1},\ldots,\pm\lambda_{m}\;(\lambda_{i}\geq0)$
respectively. We hence have $da=i\left(\lambda_{1}e_{1}\wedge f_{1}+\ldots+\lambda_{m}e_{m}\wedge f_{m}\right)$
and 
\begin{equation}
e^{-tc(da)}=\prod_{k=1}^{m}\left(\cosh(t\lambda_{k})-ie_{k}f_{k}\sinh(t\lambda_{k})\right).\label{eq:Exponential Clifford multiplication}
\end{equation}
Now if $I\in\left\{ 2,\ldots,m\right\} $, we have the commutation
\[
e_{1}f_{1}\left(\prod_{i\in I}e_{k}f_{k}\right)=\frac{1}{2}\left[e_{1},f_{1}\left(\prod_{i\in I}e_{k}f_{k}\right)\right].
\]
This shows that the only traceless terms in the expansion of \prettyref{eq:Exponential Clifford multiplication}
are the constants and hence $\textrm{tr}\left(e^{-tc(da)}\right)=\cosh(tA_{y})$.
Equation \prettyref{eq:rescaled heat trace limit} now follows. For
the second part of the theorem note that proposition \prettyref{prop:trace Dexp(td2) estimate}
gives the estimate
\begin{align}
|\textrm{tr}(D_{r}e^{-tD_{r}^{2}})|\leq & c_{1}r^{\frac{n-1}{2}}e^{c_{2}t}\label{eq:rescaled trace bound}
\end{align}
for uniform constants $c_{1},c_{2}$ independent of $r$. From this
equation, \prettyref{eq:rescaled tr(Dexp(tD2)) estimate} follows.
The uniformity of the limits \prettyref{eq:rescaled heat trace limit},\prettyref{eq:rescaled tr(Dexp(tD2)) estimate}
is also an easy consequence of the estimates \prettyref{eq:rescaled heat kernel bound}
and \prettyref{eq:rescaled trace bound}.
\end{proof}
The above theorem also follows from the rescaling argument as described
in section c) of \cite{Bismut-Vasserot}.

Next we let $N_{r}(\sigma$) denote the number of eigenvalues of $D_{r}$
in the interval $[-\sigma,\sigma]$. We also use the notation $\left\langle x\right\rangle =\sqrt{1+|x|^{2}}$
for any $x\in\mathbb{R}^{d}$. We shall need the following estimates. 
\begin{prop}
Let $\varphi\in\mathcal{S}$ be a Schwartz function. Then we have
the estimates

\begin{eqnarray}
N_{r}(\sigma) & \leq & c_{1}r^{\frac{n}{2}}(1+\sigma^{2})^{\frac{n}{2}}\label{eq:Counting function bound}\\
\textrm{tr}\,\varphi(D_{r}) & \leq & c_{2}r^{\frac{n}{2}}\left\Vert \left\langle x\right\rangle ^{n+2}\varphi\right\Vert _{C^{0}}\label{eq:Functional trace bound}
\end{eqnarray}

for constants $c_{1},c_{2}$ independent of $r$. In case the function
$\varphi$ is odd, we have 
\begin{equation}
\lim_{r\rightarrow\infty}r^{-\frac{n}{2}}\textrm{tr}\,\varphi(D_{r})=0.\label{eq:odd trace cancellation}
\end{equation}
 \end{prop}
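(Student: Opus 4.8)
The plan is to extract all three estimates from the heat--kernel bounds of the previous section: the counting estimate \prettyref{eq:Counting function bound} from the heat trace, the functional bound \prettyref{eq:Functional trace bound} as a dyadic consequence of it, and the cancellation \prettyref{eq:odd trace cancellation} from the vanishing of the rescaled odd heat traces together with a density argument. For \prettyref{eq:Counting function bound}, note that for any $t>0$ and $\sigma\ge 0$ one has the elementary inequality
\[
N_r(\sigma)=\sum_{|\lambda|\le\sigma}1\le e^{t\sigma^{2}}\sum_{\lambda\in\textrm{Spec}(D_r)}e^{-t\lambda^{2}}=e^{t\sigma^{2}}\,\textrm{tr}(e^{-tD_r^{2}}).
\]
Since $e^{-tD_r^{2}}$ has Schwartz kernel $H^{r}_{t/r}$, \prettyref{prop:Maximum Principle estimate} gives $\textrm{tr}(e^{-tD_r^{2}})\le C\,t^{-n/2}r^{n/2}e^{c_2 t}$ with $C$ depending only on $Y,S,L$; taking $t=(1+\sigma^{2})^{-1}$ bounds $e^{t\sigma^{2}}$ and $e^{c_2 t}$ by absolute constants and produces the factor $(1+\sigma^{2})^{n/2}$, which is \prettyref{eq:Counting function bound}.

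For \prettyref{eq:Functional trace bound}, estimate $|\varphi(\lambda)|\le\Vert\langle x\rangle^{n+2}\varphi\Vert_{C^{0}}\langle\lambda\rangle^{-(n+2)}$ on each eigenvalue, so that $|\textrm{tr}\,\varphi(D_r)|\le\Vert\langle x\rangle^{n+2}\varphi\Vert_{C^{0}}\sum_{\lambda}\langle\lambda\rangle^{-(n+2)}$. Split the spectrum into the block $|\lambda|\le 1$ and the dyadic annuli $2^{j-1}<|\lambda|\le 2^{j}$, $j\ge 1$. By \prettyref{eq:Counting function bound} the first block has at most $Cr^{n/2}$ eigenvalues; the $j$-th annulus has at most $N_r(2^{j})\le C r^{n/2}2^{jn}$ of them, each contributing at most $C\,2^{-j(n+2)}$, so the annuli together contribute at most $Cr^{n/2}\sum_{j\ge 1}2^{-2j}$. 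Hence $\sum_{\lambda}\langle\lambda\rangle^{-(n+2)}\le Cr^{n/2}$, which is \prettyref{eq:Functional trace bound}. (Equivalently, integrate $\langle\sigma\rangle^{-(n+2)}$ against $dN_r$ and integrate by parts, using $N_r(\sigma)\lesssim r^{n/2}\langle\sigma\rangle^{n}$.)

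For \prettyref{eq:odd trace cancellation}, let $\mathcal{A}\subset\mathcal{S}(\mathbb{R})$ be the set of odd Schwartz functions $\varphi$ with $r^{-n/2}\textrm{tr}\,\varphi(D_r)\to 0$. By \prettyref{eq:rescaled tr(Dexp(tD2)) estimate} (or already by the sharper bound \prettyref{eq:rescaled trace bound}) the Gaussian $\sigma\mapsto\sigma e^{-t\sigma^{2}}$ lies in $\mathcal{A}$ for every $t>0$. The uniform bound \prettyref{eq:Functional trace bound} shows that $\mathcal{A}$ is a linear subspace closed under limits in the norm $\Vert\langle x\rangle^{n+2}\cdot\Vert_{C^{0}}$: if $\varphi_k\to\varphi$ in this norm with $\varphi_k\in\mathcal{A}$, then $\limsup_r|r^{-n/2}\textrm{tr}\,\varphi(D_r)|\le c_2\Vert\langle x\rangle^{n+2}(\varphi-\varphi_k)\Vert_{C^{0}}\to 0$. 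Differentiating $\sigma e^{-t\sigma^{2}}$ repeatedly in $t$ at $t=1/2$ — the difference quotients converge in this norm — puts $\sigma^{2k+1}e^{-\sigma^{2}/2}$ in $\mathcal{A}$ for every $k\ge 0$, hence every odd Hermite function lies in $\mathcal{A}$. Since the Hermite functions span a dense subspace of $\mathcal{S}(\mathbb{R})$ and $\Vert\langle x\rangle^{n+2}\cdot\Vert_{C^{0}}$ is continuous on $\mathcal{S}$, the odd Hermite functions are dense among the odd Schwartz functions in this norm, and closedness of $\mathcal{A}$ forces it to contain every odd Schwartz function, giving \prettyref{eq:odd trace cancellation}.

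The first two parts are routine heat--trace bookkeeping; the substance is in the last. The point is that testing the rescaled spectral measures $r^{-n/2}\sum_{\lambda}\delta_\lambda$ against the single one-parameter family $\{\sigma e^{-t\sigma^{2}}\}$ suffices to control them against all odd Schwartz functions, and what makes this work is the uniform polynomial bound \prettyref{eq:Functional trace bound}, which upgrades the heat-trace information to a genuine total-variation-type estimate on these measures. The one thing to watch is that the weighted $C^{0}$ norm — rather than, say, the total mass of the spectral measure — is the correct topology in which $\mathcal{A}$ is closed and in which the Gaussians generate a dense subspace.
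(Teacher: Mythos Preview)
Your proof is correct and follows essentially the same route as the paper: the counting bound via the heat trace with $t=(1+\sigma^{2})^{-1}$, the functional bound by a dyadic (in the paper, unit--interval) decomposition of the spectrum against $N_{r}$, and the odd cancellation by reducing to the span of $\{\sigma^{2k+1}e^{-c\sigma^{2}}\}$ via differentiation in $t$ followed by a density argument in the weighted $C^{0}$ norm. Your justification of the differentiation step --- showing the difference quotients of $t\mapsto\sigma e^{-t\sigma^{2}}$ converge in $\|\langle x\rangle^{n+2}\cdot\|_{C^{0}}$ and using closedness of $\mathcal{A}$ --- is in fact a bit more careful than the paper's one--line appeal to uniform convergence of \prettyref{eq:rescaled tr(Dexp(tD2)) estimate}.
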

\begin{proof}
We begin with 
\[
N_{r}(\sigma)e^{-t\sigma^{2}}\leq\textrm{tr}(e^{-tD_{r}^{2}})=\textrm{tr}(H_{\frac{t}{r}})\leq c_{1}\left(\frac{r}{t}\right)^{\frac{n}{2}}e^{c_{2}t}
\]
using proposition \prettyref{prop:Maximum Principle estimate}. This
gives $N_{r}(\sigma)\leq c_{1}\left(\frac{r}{t}\right)^{\frac{n}{2}}e^{(c_{2}+1)(\sigma^{2}+1)t}$
from which\prettyref{eq:Counting function bound} follows on substituting
$t=\frac{1}{\sigma^{2}+1}$. 

For the second part we estimate
\begin{eqnarray*}
\left|\textrm{tr}\,\varphi(D_{r})\right| & = & \left|\sum_{k=-\infty}^{\infty}\sum_{\lambda\in Spec(D_{r})\cap[k,k+1)}\varphi(\lambda)\right|\\
 & \leq c_{1} & \sum_{k=-\infty}^{\infty}\sum_{\lambda\in Spec(D_{r})\cap[k,k+1)}\frac{\left\langle \lambda\right\rangle ^{n+2}}{\left\langle k+1\right\rangle ^{n+2}}\left|\varphi(\lambda)\right|\\
 & \leq & c_{1}\left\Vert \left\langle x\right\rangle ^{n+2}\varphi\right\Vert _{C^{0}}\left(\sum_{k=-\infty}^{\infty}\frac{N_{r}(k+1)}{\left\langle k+1\right\rangle ^{n+2}}\right)\\
 & \leq c_{2} & r^{\frac{n}{2}}\left\Vert \left\langle x\right\rangle ^{\frac{n}{2}+2}\varphi\right\Vert _{C^{0}}\left(\sum_{k=0}^{\infty}\frac{1}{\left\langle k+1\right\rangle ^{2}}\right).
\end{eqnarray*}
Finally to prove the third part note that \prettyref{eq:odd trace cancellation}
is true for the family of odd Schwartz functions $\varphi_{t}=xe^{-tx^{2}}$
on account of \prettyref{eq:rescaled tr(Dexp(tD2)) estimate}. Since
the convergence in \prettyref{eq:rescaled tr(Dexp(tD2)) estimate}
is uniform it may be differentiated to obtain \prettyref{eq:odd trace cancellation}
for the odd functions $\varphi=x^{2m+1}e^{-x^{2}},m\in\mathbb{N}_{0}$.
Now \prettyref{eq:Functional trace bound} along with the fact that
the span of $\left\{ x^{2m+1}e^{-x^{2}}\right\} $ is dense in the
space of odd Schwartz functions gives \prettyref{eq:odd trace cancellation}.
\end{proof}
Now consider the rescaled spectral measure of $D_{r}^{2}$ given by
\[
\mu_{r}=r^{-\frac{n}{2}}\sum_{\lambda\in Spec(D_{r})}\delta_{\lambda^{2}}.
\]
Let $C_{0}^{0}\left(\mathbb{R}_{\geq0}\right)$ denote the space of
bounded, continuous functions on $\mathbb{R}_{\geq0}$ vanishing at
$\infty$. Consider the Banach space $\mathcal{B}=\left\langle x\right\rangle ^{-n-2}$$C_{0}^{0}\left(\mathbb{R}_{\geq0}\right)$.
By a measure here we shall mean an element of the dual space $\mathcal{B}'$.
By \prettyref{eq:Functional trace bound} $\mu_{r}$ is a family of
uniformly bounded measures in $\mathcal{B}'$. We now derive a formula
for the limit of the measures $\mu_{r}$ as $r\rightarrow\infty$.
Expecting the Laplace transform of the limit measure to be given by
the integral in \prettyref{eq:rescaled heat trace limit}, we find
the Laplace inverse of its integrand. Let $2n_{y}+1$ be the dimension
of the kernel of $A_{y}$ at any point $y\in Y$ and $m_{y}=\frac{n-(2n_{y}+1)}{2}$.
Let $u(s)$ be the Heaviside step function and $Z(k)$ denote the
number of non-zero components of a multi-index $k\in\mathbb{N}_{0}^{m_{y}}$.
We then compute
\begin{eqnarray*}
\frac{1}{(4\pi t)^{\frac{n}{2}}}\det{}^{\frac{1}{2}}\left(\frac{tA}{\tanh tA}\right) & = & \frac{t^{-n_{y}-\frac{1}{2}}}{(4\pi)^{\frac{n}{2}}}\prod_{\lambda_{i}^{y}>0}\frac{\lambda_{i}^{y}}{\tanh(t\lambda_{i}^{y})}\\
 & = & \frac{t^{-n_{y}-\frac{1}{2}}}{(4\pi)^{\frac{n}{2}}}\prod_{\lambda_{i}^{y}>0}\lambda_{i}^{y}(1+2e^{-2t\lambda_{i}}+2e^{-4t\lambda_{i}}+\ldots)\\
 & = & \frac{t^{-n_{y}-\frac{1}{2}}}{(4\pi)^{\frac{n}{2}}}\left(\prod_{\lambda_{i}^{y}>0}\lambda_{i}^{y}\right)\left(\sum_{k\in\mathbb{N}_{0}^{m_{y}}}2^{Z(k)}e^{-2tk\cdot\lambda}\right)\\
 & = & \frac{\left(\prod_{\lambda_{i}^{y}>0}\lambda_{i}^{y}\right)}{(4\pi)^{\frac{n}{2}}}\mathcal{L}_{s\rightarrow t}\left(s^{n_{y}-\frac{1}{2}}\ast\left(\sum_{k\in\mathbb{N}_{0}^{m_{y}}}2^{Z(k)}\delta(s-2k\cdot\lambda)\right)\right)\\
 & = & \frac{\left(\prod_{\lambda_{i}^{y}>0}\lambda_{i}^{y}\right)}{(4\pi)^{\frac{n}{2}}}\mathcal{L}_{s\rightarrow t}\left(\sum_{k\in\mathbb{N}_{0}^{m_{y}}}2^{Z(k)}u(s-2k\cdot\lambda)(s-2k\cdot\lambda)^{n_{y}-\frac{1}{2}}\right).
\end{eqnarray*}

Motivated by this pointwise calculation on $Y$ we define the measure
\[
\mu_{\infty}^{y}=\frac{\left(\prod_{\lambda_{i}^{y}>0}\lambda_{i}^{y}\right)}{(4\pi)^{\frac{n}{2}}}\left(\sum_{k\in\mathbb{N}_{0}^{m_{y}}}2^{Z(k)}u(s-2k\cdot\lambda)(s-2k\cdot\lambda)^{n_{y}-\frac{1}{2}}\right).
\]
We now have the following proposition.
\begin{prop}
\label{prop:Pointwise limit measures family continuity} Each $\mu_{\infty}^{y}$
is a measure in $\mathcal{B}'$ satisfying $\left\Vert \mu_{\infty}^{y}\right\Vert _{\mathcal{B}'}\leq C$
for some uniform constant $C$ independent of $y$. Furthermore, the
family of measures $\mu_{\infty}^{y}\in\mathcal{B}'$ is weakly continuous
in $y$.\end{prop}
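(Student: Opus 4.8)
The plan is to reduce both assertions to properties of the Laplace transform of $\mu_\infty^y$, which by the computation carried out just before the statement equals
\[
\mathcal{L}_{s\rightarrow t}\!\left(\mu_\infty^y\right)(t)=\frac{1}{(4\pi t)^{\frac n2}}\det{}^{\frac12}\!\left(\frac{tA_y}{\tanh tA_y}\right)=\frac{1}{(4\pi t)^{\frac n2}}\prod_{\lambda_i^y>0}(t\lambda_i^y)\coth(t\lambda_i^y),\qquad t>0.
\]
Since $n_y-\tfrac12>-1$ the density of $\mu_\infty^y$ is nonnegative and locally integrable, so $\mu_\infty^y$ is a positive Radon measure on $\mathbb{R}_{\geq0}$ and $\Vert\mu_\infty^y\Vert_{\mathcal{B}'}=\int_0^\infty\langle s\rangle^{-n-2}\,d\mu_\infty^y(s)$. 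For the uniform bound I would first write $\langle s\rangle^{-n-2}=(1+s^2)^{-\frac{n+2}{2}}\leq 2^{\frac{n+2}{2}}(1+s)^{-n-2}=\frac{2^{(n+2)/2}}{\Gamma(n+2)}\int_0^\infty t^{n+1}e^{-t}e^{-ts}\,dt$, interchange integrals by Tonelli to get $\Vert\mu_\infty^y\Vert_{\mathcal{B}'}\leq\frac{2^{(n+2)/2}}{\Gamma(n+2)}\int_0^\infty t^{n+1}e^{-t}\,\mathcal{L}_{s\rightarrow t}(\mu_\infty^y)(t)\,dt$, and then use the elementary inequality $x\coth x\leq 1+x$ ($x\geq0$) to bound each factor of the product above by $1+t\lambda_i^y\leq 1+t\Lambda$, where $\Lambda:=\sup_{y\in Y,\,i}\lambda_i^y<\infty$ since $A$ is a smooth endomorphism of $TY$ on the compact manifold $Y$. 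As there are $m_y\leq m$ such factors, $\mathcal{L}_{s\rightarrow t}(\mu_\infty^y)(t)\leq(4\pi t)^{-n/2}(1+t\Lambda)^m$, whence $\Vert\mu_\infty^y\Vert_{\mathcal{B}'}\leq C$ with $C=\frac{2^{(n+2)/2}}{\Gamma(n+2)(4\pi)^{n/2}}\int_0^\infty t^{\frac n2+1}e^{-t}(1+t\Lambda)^m\,dt<\infty$ independent of $y$; in particular $\mu_\infty^y\in\mathcal{B}'$.

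For the weak continuity, I would note that the moduli $\lambda_1^y,\dots,\lambda_m^y$ of the eigenvalues of the smoothly varying antisymmetric endomorphism $A_y$ (equivalently, the square roots of the eigenvalues of $-A_y^2$) depend continuously on $y$ when listed with multiplicity, so, writing the product above as $\prod_{i=1}^m\phi(t\lambda_i^y)$ with $\phi(x)=x\coth x$ extended continuously by $\phi(0)=1$, the map $y\mapsto\mathcal{L}_{s\rightarrow t}(\mu_\infty^y)(t)$ is continuous for each fixed $t>0$. Fix $y_0$ and a sequence $y_j\to y_0$. By the uniform bound $\{\mu_\infty^y\}_{y\in Y}$ is bounded in $\mathcal{B}'$, and since $\mathcal{B}\cong C_0^0(\mathbb{R}_{\geq0})$ is separable, bounded subsets of $\mathcal{B}'$ are weak-$*$ sequentially compact; moreover the function $e_t\colon s\mapsto e^{-ts}$ lies in $\mathcal{B}$ for every $t>0$ because $\langle s\rangle^{n+2}e^{-ts}\in C_0^0(\mathbb{R}_{\geq0})$. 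Let $\nu$ be any weak-$*$ limit point of $(\mu_\infty^{y_j})$; it is a positive element of $\mathcal{B}'$, represented by a positive Radon measure against which $e_t$ is integrable, and pairing against $e_t$ yields $\mathcal{L}(\nu)(t)=\lim_k\mathcal{L}_{s\rightarrow t}(\mu_\infty^{y_{j_k}})(t)=\mathcal{L}_{s\rightarrow t}(\mu_\infty^{y_0})(t)$ for all $t>0$. Since a positive measure whose Laplace transform converges on $(0,\infty)$ is determined by that transform (reduce to the finite measure $e^{-s}\,d\nu(s)$ and invoke Stone--Weierstrass on the one-point compactification of $\mathbb{R}_{\geq0}$), $\nu=\mu_\infty^{y_0}$; as every limit point equals $\mu_\infty^{y_0}$, the full sequence converges, $\mu_\infty^{y_j}\rightharpoonup\mu_\infty^{y_0}$, which is the claimed weak continuity.

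The main obstacle, and the reason for routing through the Laplace transform rather than estimating densities directly, is that the combinatorial data defining $\mu_\infty^y$ — the integers $n_y,m_y$ and the set $\{\,i:\lambda_i^y>0\,\}$ — is only semicontinuous in $y$ and jumps along the loci where $A_y$ degenerates; in the Laplace transform the factors that appear or disappear are the degenerating ones $\phi(t\lambda_i^y)$, which converge to $\phi(0)=1$, so the transform is genuinely continuous. This device also disposes of the fact that as $\lambda_i^y\to0$ the lattice sum defining $\mu_\infty^y$ acquires unboundedly many terms below any fixed level: that growth is precisely cancelled by the vanishing prefactor $\prod_{\lambda_i^y>0}\lambda_i^y$, the cancellation being made automatic by the single bound $x\coth x\leq 1+x$.
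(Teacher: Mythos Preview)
Your proof is correct and takes a genuinely different route from the paper, most notably in the uniform bound. The paper estimates $|\mu_\infty^y(\varphi)|$ directly from the density: it bounds each term $\int_{2k\cdot\lambda}^\infty\varphi(s)(s-2k\cdot\lambda)^{n_y-1/2}\,ds$ and then controls the lattice sum over $k\in\mathbb{N}_0^{m_y}$ by a counting argument $N_R\leq C(R+\alpha)^{m_y}\bigl(\prod_{\lambda_i^y>0}\lambda_i^y\bigr)^{-1}$, so that the prefactor $\prod\lambda_i^y$ cancels and a uniform bound remains. Your argument bypasses this combinatorics entirely: writing $(1+s)^{-n-2}$ as $\frac{1}{\Gamma(n+2)}\int_0^\infty t^{n+1}e^{-t}e^{-ts}\,dt$ reduces the norm to an integral of the Laplace transform, and the single inequality $x\coth x\le 1+x$ immediately yields $\mathcal{L}(\mu_\infty^y)(t)\le(4\pi t)^{-n/2}(1+t\Lambda)^m$ uniformly in $y$. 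This is shorter and conceptually cleaner, and it makes transparent the point you highlight at the end---that the dangerous degeneration $\lambda_i^y\to 0$ is automatically harmless because $\phi(0)=1$.

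For weak continuity the two arguments are closer in spirit: both start from the continuity of $y\mapsto\det^{1/2}\!\bigl(\tfrac{tA_y}{\tanh tA_y}\bigr)$. The paper differentiates this in $t$ to obtain continuity of $\mu_\infty^y(s^m e^{-s})$ for all $m$, and then invokes density of $\{s^m e^{-s}\}$ in $\mathcal{B}$ together with the uniform bound. You instead pass through weak-$*$ sequential compactness (valid since $\mathcal{B}\cong C_0^0(\mathbb{R}_{\ge0})$ is separable) and identify any limit point via injectivity of the Laplace transform on positive measures. The paper's route is a touch more direct here; yours trades the differentiation-and-density step for a compactness-and-uniqueness step, which is equally valid.
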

\begin{proof}
For $\varphi\in\mathcal{B}$ we estimate

\begin{eqnarray}
\left|\mu_{\infty}^{y}(\varphi)\right| & = & \left|\frac{\left(\prod_{\lambda_{i}^{y}>0}\lambda_{i}^{y}\right)}{(4\pi)^{\frac{n}{2}}}\left(\sum_{k\in\mathbb{N}_{0}^{m_{y}}}2^{Z(k)}\left(\int_{2k\cdot\lambda}^{\infty}\varphi(s)(s-2k\cdot\lambda)^{n_{y}-\frac{1}{2}}ds\right)\right)\right|\nonumber \\
 & \leq & \frac{\left(\prod_{\lambda_{i}^{y}>0}\lambda_{i}^{y}\right)}{(2\pi)^{\frac{n}{2}}}\left\Vert \varphi\right\Vert _{\mathcal{B}}\left(\sum_{k\in\mathbb{N}_{0}^{m_{y}}}\left(\int_{2k\cdot\lambda}^{\infty}\left\langle s\right\rangle ^{-\frac{n}{2}-2}(s-2k\cdot\lambda)^{n_{y}-\frac{1}{2}}ds\right)\right)\nonumber \\
 & = & \frac{\left(\prod_{\lambda_{i}^{y}>0}\lambda_{i}^{y}\right)}{(2\pi)^{\frac{n}{2}}}\left\Vert \varphi\right\Vert _{\mathcal{B}}\left(\sum_{k\in\mathbb{N}_{0}^{m_{y}}}\left(\int_{0}^{\infty}\frac{s{}^{n_{y}-\frac{1}{2}}}{\left\langle s+2k\cdot\lambda\right\rangle ^{n_{y}+m_{y}+2+\frac{1}{2}}}ds\right)\right)\nonumber \\
 & \leq & \frac{\left(\prod_{\lambda_{i}^{y}>0}\lambda_{i}^{y}\right)}{(2\pi)^{\frac{n}{2}}}\left\Vert \varphi\right\Vert _{\mathcal{B}}\left(\sum_{k\in\mathbb{N}_{0}^{m_{y}}}\frac{1}{\left\langle 2k\cdot\lambda\right\rangle ^{m_{y}+\frac{3}{2}}}\left(\int_{0}^{\infty}\frac{s{}^{-\frac{1}{2}}}{\left\langle s\right\rangle }ds\right)\right)\nonumber \\
 & \leq & C\left\Vert \varphi\right\Vert _{\mathcal{B}}\left(\prod_{\lambda_{i}^{y}>0}\lambda_{i}^{y}\right)\left(\sum_{k\in\mathbb{N}_{0}^{m_{y}}}\frac{1}{\left\langle 2k\cdot\lambda\right\rangle ^{m_{y}+\frac{3}{2}}}\right).\label{eq:uniform bound pointwise measures}
\end{eqnarray}
Now if $\sup_{y\in Y}\left\Vert A_{y}\right\Vert =\alpha$ then each
$\lambda_{i}^{y}\leq\alpha$. If $N_{R}$ denotes the cardinality
of the set $S_{R}=\left\{ k\in\mathbb{N}_{0}^{m_{y}}|2k\cdot\lambda\leq R\right\} $
we have the bound $N_{R}\leq C(R+\alpha)^{m_{y}}\left(\prod_{\lambda_{i}^{y}>0}\lambda_{i}^{y}\right)^{-1}$
for $C$ depending only $n$. This is obtained on observing that the
union of the $m_{y}$-parallelotope's based at points of $S_{R}$
can be covered by the appropriately large ball. Hence we may estimate
\prettyref{eq:uniform bound pointwise measures} further by
\begin{eqnarray}
 &  & C\left\Vert \varphi\right\Vert _{\mathcal{B}}\left(\prod_{\lambda_{i}^{y}>0}\lambda_{i}^{y}\right)\left(\sum_{k\in\mathbb{N}_{0}^{m_{y}}}\frac{1}{\left\langle 2k\cdot\lambda\right\rangle ^{m_{y}+\frac{3}{2}}}\right)\nonumber \\
 & \leq & C\left\Vert \varphi\right\Vert _{\mathcal{B}}\left(\prod_{\lambda_{i}^{y}>0}\lambda_{i}^{y}\right)\left(\sum_{l\in\mathbb{N}_{0}}\frac{N_{l+1}}{\left\langle l\right\rangle ^{m_{y}+\frac{3}{2}}}\right)\nonumber \\
 & \leq & C_{1}\left\Vert \varphi\right\Vert _{\mathcal{B}}\left(\sum_{l\in\mathbb{N}_{0}}\frac{(l+\alpha)^{m_{y}}}{\left\langle l\right\rangle ^{m_{y}+\frac{3}{2}}}\right)\nonumber \\
 & \leqq & C_{2}\left\Vert \varphi\right\Vert _{\mathcal{B}}.\label{eq:uniform bound pointwise measures end}
\end{eqnarray}
Now we prove the second part of the proposition. By \prettyref{eq:uniform bound pointwise measures end}
each $\mu_{\infty}^{y}$ lies in $\mathcal{B}'$ and we need to show
that $\mu_{\infty}^{y}(\varphi)$ is a continuous function of $y$
for every $\varphi\in\mathcal{B}$. First note that 
\begin{equation}
\mu_{\infty}^{y}(e^{-ts})=\frac{1}{(4\pi t)^{\frac{n}{2}}}\det{}^{\frac{1}{2}}\left(\frac{tA_{y}}{\sinh tA_{y}}\right)\label{eq:pointwise limit measure definition}
\end{equation}
by construction. Hence $\mu_{\infty}^{y}(e^{-s})$ is a continuous
function of $y$. By differentiating \prettyref{eq:pointwise limit measure definition}
further we obtain that $\mu_{\infty}^{y}(s^{m}e^{-s})$ is a continuous
function of $y$ for each $m\in\mathbb{N}_{0}$. The result now follows
on knowing that $\left\Vert \mu_{\infty}^{y}\right\Vert _{\mathcal{B}'}$
is uniformly bounded in $y$ and the span of $s^{m}e^{-s}$ is dense
in $\mathcal{B}$.
\end{proof}
Next we define the measure $\mu_{\infty}$ via 
\[
\mu_{\infty}(\varphi)=\int_{Y}\mu_{\infty}^{y}(\varphi)dy.
\]
By proposition \prettyref{prop:Pointwise limit measures family continuity},
we have that $\mu_{\infty}$ is a well defined element of $\mathcal{B}'$. 
\begin{prop}
We have the weak convergence $\mu_{r}\rightharpoonup\mu_{\infty}$
in $\mathcal{B}'$.\end{prop}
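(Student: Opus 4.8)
My plan is to establish the weak convergence by the standard two-step recipe: uniform boundedness of the family $\{\mu_r\}$, together with convergence $\mu_r(\varphi)\to\mu_\infty(\varphi)$ on a dense subspace of $\mathcal{B}$. The first step is already in hand: as noted just after \prettyref{eq:Functional trace bound}, the $\mu_r$ are uniformly bounded in $\mathcal{B}'$, say $\|\mu_r\|_{\mathcal{B}'}\le C$ for all $r\ge 1$. For the second step I would use the subspace $V=\textrm{span}\{s^m e^{-s}:m\in\mathbb{N}_{0}\}$, which is dense in $\mathcal{B}$ (as recalled in the proof of \prettyref{prop:Pointwise limit measures family continuity}). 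Granting convergence on $V$, the proposition follows from a routine approximation: given $\varphi\in\mathcal{B}$ and $\varepsilon>0$, pick $\varphi_\varepsilon\in V$ with $\|\varphi-\varphi_\varepsilon\|_{\mathcal{B}}<\varepsilon$ and estimate
\[
|\mu_r(\varphi)-\mu_\infty(\varphi)|\le|\mu_r(\varphi-\varphi_\varepsilon)|+|\mu_r(\varphi_\varepsilon)-\mu_\infty(\varphi_\varepsilon)|+|\mu_\infty(\varphi-\varphi_\varepsilon)|\le 2C\varepsilon+|\mu_r(\varphi_\varepsilon)-\mu_\infty(\varphi_\varepsilon)|,
\]
then let $r\to\infty$ and $\varepsilon\to 0$.

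The core of the matter is therefore the convergence on $V$. The starting point is the identity
\[
\mu_r(e^{-ts})=r^{-n/2}\sum_{\lambda\in\textrm{Spec}(D_r)}e^{-t\lambda^2}=r^{-n/2}\,\textrm{tr}\left(e^{-tD_r^2}\right),\qquad t>0,
\]
so that \prettyref{eq:rescaled heat trace limit} gives $\mu_r(e^{-ts})\to\frac{1}{(4\pi t)^{n/2}}\int_Y\det{}^{1/2}\left(\frac{tA_y}{\tanh tA_y}\right)dy$, and by the very construction of the $\mu_\infty^y$ (the Laplace-transform computation preceding their definition, cf. \prettyref{eq:pointwise limit measure definition}) this limit is precisely $\int_Y\mu_\infty^y(e^{-ts})\,dy=\mu_\infty(e^{-ts})$. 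To pass from $e^{-ts}$ to the functions $s^m e^{-s}$ I would differentiate this convergence in $t$: both $t\mapsto r^{-n/2}\,\textrm{tr}(e^{-tD_r^2})$ and $t\mapsto\mu_\infty(e^{-ts})$ extend holomorphically to $\{\textrm{Re}\,t>0\}$, and the convergence in \prettyref{eq:rescaled heat trace limit} is locally uniform there, so Vitali's theorem permits term-by-term differentiation, yielding
\[
\mu_r(s^m e^{-ts})=(-1)^m r^{-n/2}\partial_t^m\,\textrm{tr}\left(e^{-tD_r^2}\right)\longrightarrow(-1)^m\partial_t^m\mu_\infty(e^{-ts})=\mu_\infty(s^m e^{-ts}),
\]
the last equality by differentiating under the integral sign, which is justified by the uniform bound on $\|\mu_\infty^y\|_{\mathcal{B}'}$ from \prettyref{prop:Pointwise limit measures family continuity}. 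Setting $t=1$ gives $\mu_r(\varphi)\to\mu_\infty(\varphi)$ for every $\varphi\in V$, and the argument is complete.

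The only step that is not purely mechanical is this passage from convergence of the heat traces to convergence against $s^m e^{-ts}$, i.e. the legitimacy of differentiating the limit in $t$; but this is exactly the device already used in the proof of \prettyref{prop:Pointwise limit measures family continuity} to propagate continuity in $y$ from $e^{-ts}$ to $s^m e^{-ts}$, so it requires no new input. With the uniform bound coming from \prettyref{eq:Functional trace bound} and the density of $V$ both already available, I expect the write-up to be short.
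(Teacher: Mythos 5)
Your proof is correct and follows essentially the same route as the paper: uniform boundedness from \prettyref{eq:Functional trace bound}, convergence on the dense span of $\{s^m e^{-ts}\}$ obtained by differentiating the heat-trace limit \prettyref{eq:rescaled heat trace limit}, and then the standard density argument. The only difference is that you spell out the justification for term-by-term differentiation via holomorphy in $t$ and Vitali's theorem, whereas the paper appeals more tersely to uniformity of the convergence on compact $t$-intervals; this is a welcome bit of extra care but not a different argument.
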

\begin{proof}
By \prettyref{eq:rescaled heat trace limit} we have that $\mu_{r}(e^{-ts})\rightarrow\mu_{\infty}(e^{-ts})$
and since this limit is uniform on compact intervals of time it may
be differentiated to obtain $\mu_{r}(s^{m}e^{-s})\rightarrow\mu_{\infty}(s^{m}e^{-s}),\forall m\in\mathbb{N}_{0}$.
Weak convergence again follows on knowing that $\left\Vert \mu_{r}\right\Vert _{\mathcal{B}'}$
is uniformly bounded in $r$ and the span of $s^{m}e^{-s}$ is dense
in $\mathcal{B}$. 
\end{proof}
Finally, we shall need the following information about the limit measure.
\begin{prop}
Let $\varphi\in\mathcal{B},\,0\leq\varphi\leq1$ be such that $supp(\varphi)\subseteq[0,\epsilon],\,0<\epsilon<1$.
Then we have
\begin{equation}
\mu_{\infty}(\varphi)\leq C\epsilon^{\frac{1}{2}}\label{eq:estimate spectral measure near zero}
\end{equation}
for some constant $C$ independent of $\varphi,\epsilon$. \end{prop}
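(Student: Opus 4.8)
The plan is to estimate $\mu_\infty^y(\varphi)$ pointwise in $y$, with a bound uniform in $y$, and then integrate over the compact manifold $Y$. Unwinding the definition of $\mu_\infty^y$, one has
\[
\mu_\infty^y(\varphi)=\frac{\prod_{\lambda_i^y>0}\lambda_i^y}{(4\pi)^{n/2}}\sum_{k\in\mathbb{N}_0^{m_y}}2^{Z(k)}\int_{2k\cdot\lambda}^{\infty}\varphi(s)\,(s-2k\cdot\lambda)^{n_y-\frac12}\,ds .
\]
Since $0\le\varphi\le1$ and $\mathrm{supp}(\varphi)\subseteq[0,\epsilon]$ with $\epsilon<1$, the $k$-th summand vanishes unless $2k\cdot\lambda\le\epsilon$, and in that case the substitution $u=s-2k\cdot\lambda$ gives
\[
\int_{2k\cdot\lambda}^{\infty}\varphi(s)\,(s-2k\cdot\lambda)^{n_y-\frac12}\,ds\le\int_0^{\epsilon-2k\cdot\lambda}u^{n_y-\frac12}\,du=\frac{(\epsilon-2k\cdot\lambda)^{n_y+\frac12}}{n_y+\frac12}\le 2\,\epsilon^{\frac12},
\]
where the last step uses $n_y\ge0$ together with $0\le\epsilon-2k\cdot\lambda\le\epsilon<1$, so that $x\mapsto x^{n_y+\frac12}$ is dominated by $x^{1/2}$ on $[0,1]$, and $\tfrac1{n_y+1/2}\le2$. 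Bounding also $2^{Z(k)}\le2^{m_y}$, this yields
\[
\mu_\infty^y(\varphi)\le\frac{2^{m_y+1}}{(4\pi)^{n/2}}\,\epsilon^{\frac12}\Bigl(\prod_{\lambda_i^y>0}\lambda_i^y\Bigr)\cdot\#\bigl\{k\in\mathbb{N}_0^{m_y}:2k\cdot\lambda\le\epsilon\bigr\}.
\]

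The only point beyond elementary estimates is to control the last factor, which pairs the possibly small quantity $\prod_{\lambda_i^y>0}\lambda_i^y$ against the possibly large number of lattice points in a small simplex; but this cancellation was already carried out in the proof of \prettyref{prop:Pointwise limit measures family continuity}. Writing $\alpha=\sup_{y\in Y}\lVert A_y\rVert$, the parallelotope-covering argument used there gives $\#\{k:2k\cdot\lambda\le R\}\le C(R+\alpha)^{m_y}\bigl(\prod_{\lambda_i^y>0}\lambda_i^y\bigr)^{-1}$ with $C$ depending only on $n$. Applying this with $R=\epsilon\le1$ and substituting into the display above, the product of eigenvalues cancels and one is left with $\mu_\infty^y(\varphi)\le C'\epsilon^{1/2}$ for a constant $C'$ depending only on $n$ and $\alpha$, hence uniform in $y$.

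Finally, integrating this pointwise bound over $Y$ and using $\mathrm{vol}(Y)<\infty$ gives $\mu_\infty(\varphi)=\int_Y\mu_\infty^y(\varphi)\,dy\le C'\,\mathrm{vol}(Y)\,\epsilon^{1/2}$, which is \prettyref{eq:estimate spectral measure near zero}; every estimate entering the argument is uniform in $\varphi$ and $\epsilon$, so the resulting constant is as well. The main obstacle, as indicated, is purely bookkeeping — feeding the uniform lattice-point count from the previous proposition into the trivial size estimate on the integrals — so no genuinely new analytic input is required.
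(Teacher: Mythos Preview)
Your proof is correct and follows essentially the same route as the paper's: both bound the $k$-th integral by $\frac{(\epsilon-2k\cdot\lambda)^{n_y+1/2}}{n_y+1/2}\le c\,\epsilon^{1/2}$, restrict the sum to $\{k:2k\cdot\lambda\le\epsilon\}$, and then invoke the lattice-point count $N_\epsilon\le C(\epsilon+\alpha)^{m_y}\bigl(\prod_{\lambda_i^y>0}\lambda_i^y\bigr)^{-1}$ from the previous proposition to cancel the eigenvalue product before integrating over $Y$. The only cosmetic difference is that the paper keeps the sharper factor $\epsilon^{n_y+1/2}$ until the final line, whereas you discard the extra $\epsilon^{n_y}$ immediately; the net estimate is the same.
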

\begin{proof}
We estimate
\begin{eqnarray*}
\left|\mu_{\infty}^{y}(\varphi)\right| & = & \left|\frac{\left(\prod_{\lambda_{i}^{y}>0}\lambda_{i}^{y}\right)}{(4\pi)^{\frac{n}{2}}}\left(\sum_{k\in\mathbb{N}_{0}^{m_{y}}}2^{Z(k)}\left(\int_{2k\cdot\lambda}^{\infty}\varphi(s)(s-2k\cdot\lambda)^{n_{y}-\frac{1}{2}}ds\right)\right)\right|\\
 & \leq & c_{1}\left(\prod_{\lambda_{i}^{y}>0}\lambda_{i}^{y}\right)\left(\sum_{k\in\mathbb{N}_{0}^{m_{y}}}\left(\int_{0}^{\infty}\varphi(u+2k\cdot\lambda)u{}^{n_{y}-\frac{1}{2}}du\right)\right)\\
 & \leq & c_{2}\left(\prod_{\lambda_{i}^{y}>0}\lambda_{i}^{y}\right)\left(\sum_{2k\cdot\lambda\leq\epsilon}\left(\int_{0}^{\epsilon}u{}^{n_{y}-\frac{1}{2}}du\right)\right)\\
 & \leq & c_{2}\left(\prod_{\lambda_{i}^{y}>0}\lambda_{i}^{y}\right)\frac{\epsilon^{n_{y}+\frac{1}{2}}}{n_{y}+\frac{1}{2}}N_{\epsilon}\\
 & \leq & c_{3}\epsilon^{n_{y}+\frac{1}{2}}(\epsilon+\alpha)^{m_{y}}\\
 & \leq & c_{4}\epsilon^{\frac{1}{2}}.
\end{eqnarray*}
For a constant $c_{4}$ independent of $y$. The proposition now follows
on integration over $Y$.
\end{proof}
We are now ready to give the proof of \prettyref{thm:main theorem}
below. 
\begin{proof}[Proof of \prettyref{thm:main theorem}]

Since the eta invariant is unchanged under rescaling it suffices to
consider $\bar{\eta}(D_{r})$. We then write
\begin{eqnarray}
\bar{\eta}(D_{r}) & = & \frac{1}{2}\left\{ \textrm{dim\,\ ker}\,(D_{r})+\intop_{0}^{\infty}\frac{1}{\sqrt{\pi t}}\textrm{tr}\left(D_{r}e^{-tD_{r}^{2}}\right)dt\right\} \nonumber \\
 & = & \frac{1}{2}\left\{ \intop_{0}^{1}\frac{1}{\sqrt{\pi t}}\textrm{tr}\left(D_{r}e^{-tD_{r}^{2}}\right)dt+\textrm{tr}\, E(D_{r})\right\} .\label{eq:eta invariant splitting}
\end{eqnarray}
Here $E(x)=\text{sign}(x)\text{erfc}(|x|)=\text{sign}(x)\cdot\frac{2}{\sqrt{\pi}}\int_{|x|}^{\infty}e^{-s^{2}}ds$
with the convention $\text{sign}(0)=1$. The first summand of \prettyref{eq:eta invariant splitting}
is $o(r^{\frac{n}{2}})$ on account of the uniform convergence in
\prettyref{eq:rescaled tr(Dexp(tD2)) estimate}. To bound the second
term we fix $0<\epsilon<1$ and define Schwartz functions $\varphi_{1},\varphi_{2}\in\mathcal{S}$
satisfying.
\begin{enumerate}
\item $\varphi_{1}$ odd, $\varphi_{2}$ even
\item $-1\leq\varphi_{1}\leq1,\;0\leq\varphi_{2}\leq1$
\item $\varphi_{1}(x)=E(x)$ for $x\notin\left[-\frac{\epsilon}{2},\frac{\epsilon}{2}\right]$
\label{-for-odd  function approximating E}
\item $supp(\varphi_{2})\subset\left[-\epsilon,\epsilon\right]$,$\varphi_{2}(x)=1$
for $x\in\left[-\frac{\epsilon}{2},\frac{\epsilon}{2}\right].$\label{,-for cutoff near zero }
\end{enumerate}

Since $\varphi_{2}$ is even we may also assume $\varphi_{2}(x)=\bar{\varphi}_{2}(x^{2})$.
We then have
\begin{eqnarray*}
r^{-\frac{n}{2}}\textrm{tr}\, E(D_{r}) & \leq & r^{-\frac{n}{2}}\left(\left|\textrm{tr}\, E(D_{r})-\textrm{tr}\,\varphi_{1}(D_{r})\right|+\left|\textrm{tr}\,\varphi_{1}(D_{r})\right|\right)\\
 & \leq & r^{-\frac{n}{2}}\left(2N_{r}\left(\frac{\epsilon}{2}\right)+\left|\textrm{tr}\,\varphi_{1}(D_{r})\right|\right)\qquad(\text{by \ref{-for-odd  function approximating E}})\\
 & \leq & r^{-\frac{n}{2}}\left(2\textrm{tr}\,\varphi_{2}(D_{r})+\left|\textrm{tr}\,\varphi_{1}(D_{r})\right|\right)\qquad(\text{by }\ref{,-for cutoff near zero })\\
 & = & 2\mu_{r}(\bar{\varphi}_{2})+r^{-\frac{n}{2}}\left|\textrm{\textrm{tr}}\,\varphi_{1}(D_{r})\right|\\
 & \leq & 2\left|\mu_{r}(\bar{\varphi}_{2})-\mu_{\infty}(\bar{\varphi}_{2})\right|+2\left|\mu_{\infty}(\bar{\varphi}_{2})\right|+r^{-\frac{n}{2}}\left|\textrm{\textrm{tr}}\,\varphi_{1}(D_{r})\right|\\
 & \leq & c\epsilon^{\frac{1}{4}},
\end{eqnarray*}
for $r$ sufficiently large by \prettyref{eq:odd trace cancellation},
\prettyref{eq:estimate spectral measure near zero} and the weak convergence
$\mu_{r}\rightharpoonup\mu_{\infty}$. 

\end{proof}

\section{Eta invariant of a circle bundle\label{sec:Eta-invariant-of}}

In this section we consider the eta invariant in a specialized case.
In particular, we let $Y$ to be the total space of principal circle
bundle $S^{1}\rightarrow Y^{2m+1}\xrightarrow{\pi}X^{2m}$ over a
base of even dimension $2m=n-1$. We assume that $X$ is oriented
and equipped with a metric $g^{TX}$ and a spin structure. Let $TS^{1}=T^{V}Y\subset TY$
denote the subbundle of $TY$ consisting of the vertical tangent vectors.
Let $T^{H}Y\subset TY$ be another subbundle corresponding to a connection
on the fibration and hence giving an invariant splitting 
\begin{align}
TY= & T^{V}Y\oplus T^{H}Y\label{eq:tangent bundle splitting}
\end{align}
of the tangent bundle. The projection $\pi$ gives an identification
$T^{H}Y=\pi^{*}TX$. Consider the trivializing section of $TS^{1}$
given by $e_{y}=\frac{\partial}{\partial t}(e^{it}.y)|_{t=0}\in T_{y}S^{1}$,
the infinitesimal generator of the $S^{1}$ action. Let $g^{TS^{1}}$
be the metric on $TS^{1}$ such that $\left\Vert e\right\Vert _{g^{TS^{1}}}=1$.
We now consider the adiabatic family of metrics 
\begin{equation}
g_{\varepsilon}^{TY}=g^{TS^{1}}\oplus\varepsilon^{-1}\pi^{*}g^{TX}\label{eq:def. adiabatic metrics}
\end{equation}
on $Y$ as in \cite{Bismut-Cheeger}. 

Let $\nabla^{TY,\varepsilon},\nabla^{TX}$ denote the Levi-Civita
connections of $g_{\varepsilon}^{TY},g^{TX}$ respectively. The connection
$\nabla^{TY,\varepsilon}$ need not preserve the splitting \prettyref{eq:tangent bundle splitting}.
Let $p^{TS^{1}},p^{H}$ denote the projections of $TY$ onto $TS^{1},T^{H}Y$
respectively. Define a connection on $TS^{1}$ via $\nabla^{TS^{1}}=p^{TS^{1}}\nabla^{TY,\varepsilon}$.
As shown in section $4$ of \cite{Bismut-Cheeger}, the connection
$\nabla^{TS^{1}}$ is independent of $\varepsilon$. In the case of
circle bundles this is easily checked by showing that $e$ is $\nabla^{TS^{1}}$
-parallel via 
\begin{eqnarray}
\left\langle \nabla_{U}^{TS^{1}}e,e\right\rangle  & = & \left\langle \nabla_{U}^{TY,\varepsilon}e,e\right\rangle \nonumber \\
 & = & \frac{1}{2}U\left\langle e,e\right\rangle =0,\quad\forall U\in TY.\label{eq:e is parallel wrt horizontal connection}
\end{eqnarray}
Define the second connection $\nabla$ on $TY$ to be $\nabla=\nabla^{TS^{1}}\oplus\pi^{*}\nabla^{TX}$.
The connection $\nabla$ does preserve the splitting of $TX$ but
need not be torsion free. Let $T$ denote the torsion tensor of $\nabla$
and define the difference tensor 
\[
S^{\varepsilon}=\nabla^{TY,\varepsilon}-\nabla.
\]
Since $\nabla^{TY,\varepsilon}$ is torsion free, for tangent vectors
$U,V,W\in TY$ we have 
\[
S^{\varepsilon}(U)V-S^{\varepsilon}(V)U=-T(U,V).
\]
Since both $\nabla^{TY,\varepsilon},\nabla$ are compatible with $g_{\varepsilon}^{TY}$,
we also have
\[
\left\langle S^{\varepsilon}(U)V,W\right\rangle +\left\langle V,S^{\varepsilon}(U)W\right\rangle =0
\]
 where $\left\langle \right\rangle =g_{\varepsilon}^{TY}$. The last
two equations give
\begin{equation}
\left\langle S^{\varepsilon}(U)V,W\right\rangle =\frac{1}{2}\left[\left\langle T(V,W),U\right\rangle -\left\langle T(W,U),V\right\rangle -\left\langle T(U,V),W\right\rangle \right].\label{eq:Difference tensor in terms of Torsion tensor}
\end{equation}
Next we let $g^{TY}=g_{1}^{TY}$, $\nabla^{TY}=\nabla^{TY,1}$ and
$S=\nabla^{TY}-\nabla$ be the metric, Levi-Civita connection and
the difference tensor respectively when the adiabatic parameter $\varepsilon=1$
is set to one. It is clear from equations \prettyref{eq:def. adiabatic metrics}
and \prettyref{eq:Difference tensor in terms of Torsion tensor} that
\begin{eqnarray}
p^{H}S^{\varepsilon} & = & \varepsilon p^{H}S,\label{eq:Projections difference tensor 1}\\
p^{TS^{1}}S^{\varepsilon} & = & p^{TS^{1}}S.\label{eq:Projections difference tensor 2}
\end{eqnarray}
 Let $f_{i}$ be a locally defined orthonormal frame of vector fields
on the base $X$ and $\tilde{f_{i}}$ their lifts to $Y$. The torsion
tensor $T$ can be computed in the following cases to be 
\begin{enumerate}
\item $T(e,e)=0$ as $T$ is antisymmetric,
\item $T(e,\tilde{f})=\nabla_{e}\tilde{f}-\nabla_{\tilde{f}}e-[e,\tilde{f}]=-[e,\tilde{f}]=0$
for $\tilde{f}\in T^{H}Y$, as $\tilde{f}$ is $S^{1}$ invariant,
\item $T(\tilde{f}_{1},\tilde{f}_{2})=R(f_{1},f_{2})$ the curvature of
the $S^{1}$ connection, as $\nabla^{TX}$ is torsion free.
\end{enumerate}
Following the above computation of the torsion tensor, \prettyref{eq:Difference tensor in terms of Torsion tensor}
now clearly implies 
\begin{equation}
S(e)e=0.\label{eq:Difference tensor annihilates e}
\end{equation}
Define $e^{*}$ to be the one form which annihilates $T^{H}M$ and
$e^{*}(e)=1$. We then compute $de^{*}\left(\tilde{f}_{1},\tilde{f}_{2}\right)=-\left\langle \left[\tilde{f}_{1},\tilde{f}_{2}\right],e\right\rangle =R\left(f_{1},f_{2}\right)$
is the curvature of the $S^{1}$ connection.

\subsection{Splitting of the Dirac operator}

The spin structure on $TX$ can be pulled back to one on $T^{H}Y$.
Combined with the trivial spin structure on $TS^{1}$ this gives a
spin structure on $TY$. If $S_{\pm}^{TX}$denote the bundles of positive
and negative spinors on $X$, we have the identification $S^{TY}=\pi^{*}(S_{+}^{TX}\oplus S_{-}^{TX})$.
This in turn gives 
\[
C^{\infty}(Y,S^{TY})=C^{\infty}(X;(S_{+}^{TX}\oplus S_{-}^{TX})\otimes C^{\infty}(Y_{x})).
\]
 We now decompose 
\begin{equation}
C^{\infty}(Y_{x})=\bigoplus_{k\in\mathbb{Z}}E_{k}\label{eq:Fourier decomposition along fibre}
\end{equation}
according to the eigenspaces of $e$. Each $E_{k}$ corresponds to
the eigenvalue $-ik$ and gives a line bundle over $X$. Let $\mathcal{L}\rightarrow X$
be the Hermitian line bundle over $X$ corresponding to the standard
representation of $S^{1}$. Note that we may reconstruct $Y$ as the
space of unit elements in $\mathcal{L}$. We now also have the identification
$E_{k}=\mathcal{L}^{\otimes k}$. For any point $y_{x}\in Y_{x}\subset\mathcal{L}_{x}$,
this identification maps $y_{x}^{\otimes k}$ to $\{f(y_{x}e^{i\theta})=e^{-ik\theta}\}$
and we extend it by linearity. Hence we have the decomposition of
the space of spinors on $Y$ into 
\begin{equation}
C^{\infty}(Y,S^{TY})=\bigoplus_{k\in\mathbb{Z}}C^{\infty}(X;(S_{+}^{TX}\oplus S_{-}^{TX})\otimes\mathcal{L}^{\otimes k}).\label{eq:Decomposition of spinors}
\end{equation}
Finally, we twist the spin bundle $S^{TY}\otimes\mathbb{C}=S^{TY}$
by the trivial Hermitian line bundle but equipped with the family
of unitary connections $A_{r}=d+ire^{*}$. 

We now consider how the family of coupled Dirac operators $D_{A_{r},\varepsilon}$
decomposes with respect to \prettyref{eq:Decomposition of spinors}.
Let $\nabla^{S^{TY},\varepsilon},\tilde{\nabla}$ denote the lifts
of $\nabla^{TY,\varepsilon},\nabla$ to the spin bundle. Let $c^{\varepsilon}$
stand for the Clifford multiplication associated to the metric $g_{\varepsilon}^{TY}$.
We let $e_{i}=\varepsilon^{1/2}\tilde{f_{i}}$, where $\tilde{f_{i}}$
denote the locally defined horizontal lifts introduced earlier, and
also adopt the notation $e=e_{0}$. Using \prettyref{eq:Difference tensor in terms of Torsion tensor}
and the computation of the torsion tensor done in the previous subsection,
we now compute 
\begin{eqnarray}
D_{A_{r},\varepsilon} & = & \sum_{i=0}^{2m}c^{\varepsilon}(e_{i})\nabla_{e_{i}}^{S^{TY},\varepsilon}+irc^{\varepsilon}(e)\nonumber \\
 & = & c^{\varepsilon}(e)\tilde{\nabla}_{e}+\sum_{i=1}^{2m}c^{\varepsilon}(e_{i})\tilde{\nabla}_{e_{i}}+\frac{1}{4}\sum_{ijk}\left\langle S^{\varepsilon}(e_{i})e_{j},e_{k}\right\rangle c^{\varepsilon}(e_{i})c^{\varepsilon}(e_{j})c^{\varepsilon}(e_{k})+irc^{\varepsilon}(e)\nonumber \\
 & = & \bigoplus_{k}\begin{bmatrix}k-\frac{i\varepsilon}{4}c(R)-r & \varepsilon^{1/2}D_{-}^{B,k}\\
\varepsilon^{1/2}D_{+}^{B,k} & -k+\frac{i\varepsilon}{4}c(R)+r
\end{bmatrix}.\label{eq:Dirac op. splitting}
\end{eqnarray}
Here $D_{\pm}^{X,k}$ denotes the coupled Dirac operators acting on
sections of $S_{\pm}^{TX}\otimes\mathcal{L}^{\otimes k}$ and $c(R)=\sum_{i<j}R(f_{i},f_{j})c(f_{i})c(f_{j})$
denotes the Clifford multiplication by the curvature tensor $R$ on
the base $X$.

\subsection{\label{sub:The-Kahler-case}The Kahler case}

We now specialize to the case when $X$ is a complex manifold, with
complex structure $J$. We further assume $\mathcal{L}$ to be a positive,
holomorphic, Hermitian line bundle on $X$ . The curvature $R$ of
the associated holomorphic connection is now a $(1,1)$ form. Positivity
of $\mathcal{L}$ here means that $\frac{1}{2}R=\omega$ is a Kahler
form on $X$ (i.e. $\omega(.,J.)=g^{TX}(.,.)$ is a metric). A spin
structure on $X$ corresponds to a holomorphic, Hermitian square root
$\mathcal{K}$ of the canonical line bundle $\mathcal{K}^{\otimes2}=K_{X}$
(cf . \cite{Hitchin}). The corresponding bundles of positive and
negative spinors are $\Lambda^{even}T^{0,1*}\otimes\mathcal{K}$ and
$\Lambda^{odd}T^{0,1*}\otimes\mathcal{K}$ respectively while the
spin Dirac operator is $\sqrt{2}(\bar{\partial}_{\mathcal{K}}+\bar{\partial}_{\mathcal{K}}^{*})$
with $\bar{\partial}_{\mathcal{K}}$ being the holomorphic derivative
on $\Lambda^{*}T^{0,1*}\otimes\mathcal{K}$. Similarly the twisted
Dirac operator acting on sections of $\Lambda^{*}T^{0,1*}\otimes\mathcal{K}\otimes\mathcal{L}^{\otimes k}$is
given by $\sqrt{2}(\bar{\partial}_{\mathcal{K}\otimes\mathcal{L}^{\otimes k}}+\bar{\partial}_{\mathcal{K}\otimes\mathcal{L}^{\otimes k}}^{*})$.
Denote the holomorphic derivative $\bar{\partial}_{\mathcal{K}\otimes\mathcal{L}^{\otimes k}}$
on $\mathcal{K}\otimes\mathcal{L}^{\otimes k}$ by the shorthand $\bar{\partial}_{k}$
and let $\Delta_{\bar{\partial}_{k}}=\bar{\partial}_{k}\bar{\partial}_{k}^{*}+\bar{\partial}_{k}^{*}\bar{\partial}_{k}$
denote the Hodge Laplacian. Clifford multiplication by the Kahler
form is $c(\omega)=i(2N-m)$ where $N$ is the number operator which
acts as multiplication by $p$ on $\Lambda^{p}T^{0,1*}$. Hence the
formula \prettyref{eq:Dirac op. splitting} for the Dirac operator
is seen to specialize to 
\begin{equation}
D_{A_{r},\varepsilon}=\bigoplus_{k}\begin{bmatrix}k+\varepsilon(N-\frac{m}{2})-r & (2\varepsilon)^{1/2}(\bar{\partial}_{k}+\bar{\partial}_{k}^{*})\\
(2\varepsilon)^{1/2}(\bar{\partial}_{k}+\bar{\partial}_{k}^{*}) & -k-\varepsilon(N-\frac{m}{2})+r
\end{bmatrix}.\label{eq:Kahler Dirac op. splitting}
\end{equation}

Denote by $\mathcal{A}^{0,p}\left(\mathcal{L}^{\otimes k}\right)$
the space of $\mathcal{L}^{\otimes k}$-valued $(0,p)$ forms. Let
\[
\mathcal{A}^{0,p}\left(\mathcal{L}^{\otimes k}\right)=\bigoplus_{\mu\geq0}E_{\mu}^{p,k}
\]
be the spectral decomposition of $\Delta_{\bar{\partial}_{k}}$ where
$E_{\mu}^{p,k}$ denotes the eigenspace with eigenvalue $\frac{1}{2}\mu^{2}$.
From \prettyref{eq:Kahler Dirac op. splitting} it is clear that $\left[D_{A_{r}},\Delta_{\bar{\partial}_{k}}\right]=0$
and hence the Dirac operator preserves the eigenspaces $\bigoplus_{p}E_{\mu}^{p,k}$
of $\Delta_{\bar{\partial}_{k}}$ for each $\mu$. Let $\textrm{dim}\, E_{\mu}^{p,k}=e_{\mu}^{p,k}$
and define $d_{\mu}^{p,k}=e_{\mu}^{p,k}-e_{\mu}^{p-1,k}+\ldots+(-1)^{p}e_{\mu}^{0,k}$. 
\begin{lem}
\label{lem:Lemma to compute spectrum}For each positive $\mu>0$ we
have $d_{\mu}^{p,k}\geq0$. Furthermore there exists a collection
of $\bar{\partial}_{k}^{*}$-closed $p$ forms $\left\{ \omega_{j}^{p}\right\} _{j=1}^{d_{\mu}^{p,k}},$
such that $\left\{ \omega_{j}^{p}\right\} _{j=1}^{d_{\mu}^{p,k}}\cup\left\{ \bar{\partial}_{k}\omega_{j}^{p-1}\right\} _{j=1}^{d_{\mu}^{p-1,k}}$is
a basis of $E_{\mu}^{p,k}$.\end{lem}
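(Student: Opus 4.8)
The plan is to fix $\mu>0$ and $k$ and apply finite--dimensional Hodge theory to the complex $\bigl(\bigoplus_p E_{\mu}^{p,k},\bar{\partial}_k\bigr)$. The first step is to note that $\bar{\partial}_k$ and $\bar{\partial}_k^*$ commute with $\Delta_{\bar{\partial}_k}$, so they restrict to maps $\bar{\partial}_k\colon E_{\mu}^{p,k}\to E_{\mu}^{p+1,k}$ and $\bar{\partial}_k^*\colon E_{\mu}^{p,k}\to E_{\mu}^{p-1,k}$, and the Hodge decomposition $\mathcal{A}^{0,p}(\mathcal{L}^{\otimes k})=\textrm{im}\,\bar{\partial}_k\oplus\textrm{im}\,\bar{\partial}_k^*\oplus\ker\Delta_{\bar{\partial}_k}$ restricts to each $\Delta_{\bar{\partial}_k}$--eigenspace. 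On $E_{\mu}^{p,k}$ one has $\Delta_{\bar{\partial}_k}=\tfrac12\mu^2$, so the harmonic part vanishes; this gives the orthogonal splitting
\[
E_{\mu}^{p,k}=\bar{\partial}_k\bigl(E_{\mu}^{p-1,k}\bigr)\oplus\bar{\partial}_k^*\bigl(E_{\mu}^{p+1,k}\bigr),
\]
and, since a form in $E_{\mu}^{p,k}$ is $\bar{\partial}_k^*$--closed precisely when it is orthogonal to $\bar{\partial}_k(E_{\mu}^{p-1,k})$ (distinct eigenspaces being orthogonal), the space of $\bar{\partial}_k^*$--closed elements of $E_{\mu}^{p,k}$ is exactly the second summand $\bar{\partial}_k^*(E_{\mu}^{p+1,k})$.

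Next I would choose $\{\omega_j^p\}_{j=1}^{N_p}$ to be any basis of $\bar{\partial}_k^*(E_{\mu}^{p+1,k})$; these are $\bar{\partial}_k^*$--closed because $(\bar{\partial}_k^*)^2=0$. I then check that $\bar{\partial}_k$ restricts to an isomorphism from $\bar{\partial}_k^*(E_{\mu}^{p,k})$, i.e. the $\bar{\partial}_k^*$--closed part of $E_{\mu}^{p-1,k}$, onto $\bar{\partial}_k(E_{\mu}^{p-1,k})$: surjectivity is immediate from the splitting of $E_{\mu}^{p-1,k}$ together with $\bar{\partial}_k^2=0$, and injectivity follows from the vanishing of harmonic forms, since an $\omega$ that is both $\bar{\partial}_k$--closed and $\bar{\partial}_k^*$--closed satisfies $\Delta_{\bar{\partial}_k}\omega=0$ and hence $\omega=0$ in $E_{\mu}^{p-1,k}$. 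Consequently $\{\bar{\partial}_k\omega_j^{p-1}\}_{j=1}^{N_{p-1}}$ is a basis of $\bar{\partial}_k(E_{\mu}^{p-1,k})$, and together with $\{\omega_j^p\}_{j=1}^{N_p}$ it is a basis of $E_{\mu}^{p,k}$ by the orthogonal splitting.

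Finally I would carry out the dimension bookkeeping. The splitting gives $e_{\mu}^{p,k}=\dim\bar{\partial}_k(E_{\mu}^{p-1,k})+N_p$, and the isomorphism above gives $\dim\bar{\partial}_k(E_{\mu}^{p-1,k})=\dim\bar{\partial}_k^*(E_{\mu}^{p,k})=N_{p-1}$, so $e_{\mu}^{p,k}=N_{p-1}+N_p$ for all $p\ge0$, with $N_{-1}=0$ since there are no $(0,-1)$--forms. Induction on $p$ then yields $N_p=e_{\mu}^{p,k}-e_{\mu}^{p-1,k}+\cdots+(-1)^p e_{\mu}^{0,k}=d_{\mu}^{p,k}$; in particular $d_{\mu}^{p,k}=N_p\ge0$, the family $\{\omega_j^p\}$ has $d_{\mu}^{p,k}$ members, and the family $\{\bar{\partial}_k\omega_j^{p-1}\}$ has $N_{p-1}=d_{\mu}^{p-1,k}$ members, exactly as asserted.

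I do not expect a genuine obstacle here: this is standard finite--dimensional Hodge theory performed eigenspace by eigenspace, the key input being that $\bar{\partial}_k$ and $\bar{\partial}_k^*$ preserve the $\Delta_{\bar{\partial}_k}$--eigenspaces. The only points that need care are the index bookkeeping that identifies $N_p$ with the alternating sum $d_{\mu}^{p,k}$, and the routine verification that the Hodge decomposition of $\mathcal{A}^{0,\bullet}(\mathcal{L}^{\otimes k})$ restricts to these finite--dimensional eigenspaces.
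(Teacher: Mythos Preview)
Your proof is correct and follows essentially the same route as the paper: both exploit that $\bar{\partial}_k$ and $\bar{\partial}_k^*$ preserve the $\Delta_{\bar{\partial}_k}$--eigenspaces and that there are no harmonic forms in $E_\mu^{p,k}$ for $\mu>0$, yielding the splitting $E_\mu^{p,k}=\bar{\partial}_k(E_\mu^{p-1,k})\oplus\ker\bar{\partial}_k^*|_{E_\mu^{p,k}}$. The only difference is cosmetic: the paper builds this splitting by an explicit induction on $p$, choosing each $\{\omega_j^{p+1}\}$ as an orthogonal complement and checking $\bar{\partial}_k^*$--closedness by hand, whereas you invoke the Hodge decomposition at the outset and then read off the dimension identity $e_\mu^{p,k}=N_{p-1}+N_p$.
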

\begin{proof}
We proceed by induction on $p$. Clearly $d_{\mu}^{0,k}=e_{\mu}^{0,k}\geq0$.
We take $\left\{ \omega_{j}^{0}\right\} _{j=1}^{d_{\mu}^{0,k}}$to
be any basis of $E_{\mu}^{0,k}$. Now assume that $\omega_{j}^{p}$
have been defined. Since they are $\bar{\partial}_{k}^{*}$-closed
we have 
\begin{eqnarray*}
\bar{\partial}_{k}^{*}\bar{\partial}_{k}\omega_{j}^{p} & =\Delta_{\bar{\partial_{k}}}\omega_{j}^{p} & =\frac{1}{2}\mu^{2}\omega_{j}^{p}\\
\Delta_{\bar{\partial_{k}}}\bar{\partial}_{k}\omega_{j}^{p} & =\bar{\partial}_{k}\bar{\partial}_{k}^{*}\bar{\partial}_{k}\omega_{j}^{p} & =\frac{1}{2}\mu^{2}\bar{\partial}_{k}\omega_{j}^{p}
\end{eqnarray*}
Hence the collection of forms $\left\{ \bar{\partial}_{k}\omega_{j}^{p}\right\} _{j=1}^{d_{\mu}^{p,k}}$
is linearly independent inside $E_{\mu}^{p+1,k}$. This implies $d_{\mu}^{p+1,k}=e_{\mu}^{p,k}-d_{\mu}^{p,k}\geq0$.
We choose $\left\{ \omega_{j}^{p+1}\right\} _{j=1}^{d_{\mu}^{p+1,k}}$
to be any basis for the orthogonal complement of the span the space
of $\left\{ \bar{\partial}_{k}\omega_{j}^{p}\right\} _{j=1}^{d_{\mu}^{p,k}}$
inside $E_{\mu}^{p+1,k}$. That each $\omega_{j}^{p+1}$ is $\bar{\partial}_{k}^{*}$-closed
follows from
\begin{eqnarray*}
\left\langle \bar{\partial}_{k}^{*}\omega_{j}^{p+1},\omega_{j'}^{p}\right\rangle  & =\left\langle \omega_{j}^{p+1},\bar{\partial}_{k}\omega_{j'}^{p}\right\rangle  & =0\\
\left\langle \bar{\partial}_{k}^{*}\omega_{j}^{p+1},\bar{\partial}_{k}\omega_{j'}^{p-1}\right\rangle  & =\left\langle \omega_{j}^{p+1},\bar{\partial}_{k}^{2}\omega_{j'}^{p-1}\right\rangle  & =0.
\end{eqnarray*}
Finally, $\left\{ \bar{\partial}_{k}\omega_{j}^{2m-1}\right\} _{j=1}^{d_{2m-1}}$
span $E_{\mu}^{2m,k}$ since the Dirac operator $\sqrt{2}(\bar{\partial}_{k}+\bar{\partial}_{k}^{*})$
is an isomorphism between $E_{\mu}^{even,k}$ and $E_{\mu}^{odd,k}$
for $\mu>0$.
\end{proof}
Following this lemma, we see using \prettyref{eq:Kahler Dirac op. splitting}
that the Dirac operator preserves the two dimensional subspaces of
spanned by $\left\{ \omega_{j}^{p},\frac{\sqrt{2}}{\mu}\bar{\partial}_{k}\omega_{j}^{p}\right\} $,
for each $0\leq j\leq d_{\mu}^{p,k}$. Furthermore its restriction
to this two dimensional subspace is given by the matrix
\[
\begin{bmatrix}(-1)^{p}(k+\varepsilon(p-\frac{m}{2})-r) & \mu\varepsilon^{1/2}\\
\mu\varepsilon^{1/2} & (-1)^{p+1}(k+\varepsilon(p+1-\frac{m}{2})-r)
\end{bmatrix}.
\]
The eigenvalues of the above matrix are computed to be
\[
\lambda=\frac{(-1)^{p+1}\varepsilon\pm\sqrt{(2k+\varepsilon(2p-m)-2r+1)^{2}+4\mu^{2}\varepsilon}}{2}.
\]
From \prettyref{eq:Kahler Dirac op. splitting} we also see that each
of $E_{0}^{p,k}$ is an eigenspace of the $D_{A_{r},\varepsilon}$
with eigenvalue $\lambda=(-1)^{p}(k+\varepsilon(p-\frac{m}{2})-r)$.
From Hodge theory, we have $E_{0}^{p,k}=H^{p}(X,\mathcal{K}\otimes\mathcal{L}^{\otimes k})$
and we denote 
\begin{align*}
h^{p,k}:= & e_{0}^{p,k}=\dim\, H^{p}(X,\mathcal{K}\otimes\mathcal{L}^{\otimes k}).
\end{align*}
 To sum up we have the following computation.
\begin{prop}
\label{prop:Computation of Dirac Spectrum}The eigenvalues of the
Dirac operator are given by the two types 
\begin{enumerate}
\item Type 1:
\[
\lambda=(-1)^{p}(k+\varepsilon(p-\frac{m}{2})-r),\;0\leq p\leq m,k\in\mathbb{Z}
\]
with multiplicity \textup{$h^{p,k}=dim\, H^{p}(X,\mathcal{K}\otimes\mathcal{L}^{\otimes k})$.}
\item Type 2:
\[
\lambda=\frac{(-1)^{p+1}\varepsilon\pm\sqrt{(2k+\varepsilon(2p-m)-2r+1)^{2}+4\mu^{2}\varepsilon}}{2},\;0\leq p\leq m,k\in\mathbb{Z}
\]
and $\frac{1}{2}\mu^{2}$ is a positive eigenvalue of $\Delta_{\bar{\partial_{k}}}^{p}$.
The multiplicity of $\lambda$ is $d_{\mu}^{p,k}=e_{\mu}^{p,k}-e_{\mu}^{p-1,k}+\ldots+(-1)^{p}e_{\mu}^{0,k}$
where $e_{\mu}^{p,k}$ is the multiplicity of $\frac{1}{2}\mu^{2}$.
\end{enumerate}
\end{prop}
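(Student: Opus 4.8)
The plan is to assemble the spectrum from the fibrewise Fourier decomposition \prettyref{eq:Decomposition of spinors} together with the Hodge decomposition of the Dolbeault Laplacians $\Delta_{\bar{\partial}_k}$, diagonalizing the matrix form \prettyref{eq:Kahler Dirac op. splitting} of $D_{A_r,\varepsilon}$ on one eigenspace of $\Delta_{\bar{\partial}_k}$ at a time. On the summand labelled by $k$, every operator entering \prettyref{eq:Kahler Dirac op. splitting} --- namely $\bar{\partial}_k$, $\bar{\partial}_k^*$, the number operator $N$, and the scalars $k,r$ --- commutes with $\Delta_{\bar{\partial}_k}=\bar{\partial}_k\bar{\partial}_k^*+\bar{\partial}_k^*\bar{\partial}_k$, so $D_{A_r,\varepsilon}$ preserves each finite--dimensional space $\bigoplus_p E_\mu^{p,k}$. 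Since $L^2(Y,S^{TY})$ decomposes as the Hilbert direct sum of the spaces $\bigoplus_p E_\mu^{p,k}$ over $k\in\mathbb{Z}$ and over the eigenvalues $\tfrac12\mu^2$ of $\Delta_{\bar{\partial}_k}$, it suffices to diagonalize $D_{A_r,\varepsilon}$ on each such space.

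For $\mu=0$ one uses Hodge theory: $E_0^{p,k}$ is the space of $\Delta_{\bar{\partial}_k}$--harmonic $(0,p)$--forms, i.e.\ $H^p(X,\mathcal{K}\otimes\mathcal{L}^{\otimes k})$, and $\bar{\partial}_k$ and $\bar{\partial}_k^*$ both vanish there. Hence the off--diagonal block of \prettyref{eq:Kahler Dirac op. splitting} annihilates $E_0^{p,k}$ while the diagonal block acts as the scalar $(-1)^p\bigl(k+\varepsilon(p-\tfrac{m}{2})-r\bigr)$, with $N$ reading off the degree $p$ and the sign $(-1)^p$ coming from the $S_+/S_-$ grading. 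This produces the Type $1$ eigenvalues with multiplicity $h^{p,k}=\dim H^p(X,\mathcal{K}\otimes\mathcal{L}^{\otimes k})$.

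For $\mu>0$ the main input is \prettyref{lem:Lemma to compute spectrum}. Taking the families $\{\omega_j^p\}_{j=1}^{d_\mu^{p,k}}$ orthonormal, the lemma exhibits $\bigoplus_p E_\mu^{p,k}$ as the orthogonal direct sum, over $0\le p\le m-1$ and $1\le j\le d_\mu^{p,k}$, of the two--dimensional subspaces $V_{p,j}=\mathrm{span}\{\omega_j^p,\ \tfrac{\sqrt2}{\mu}\bar{\partial}_k\omega_j^p\}$; orthogonality of distinct $V_{p,j}$ follows from $\bar{\partial}_k^*\omega_j^p=0$ and $\bar{\partial}_k^*\bar{\partial}_k\omega_j^p=\Delta_{\bar{\partial}_k}\omega_j^p=\tfrac12\mu^2\omega_j^p$, while the relation $e_\mu^{p,k}=d_\mu^{p,k}+d_\mu^{p-1,k}$ with $d_\mu^{-1,k}=d_\mu^{m,k}=0$ shows these subspaces exhaust the space, so the $p=m$ case in the statement contributes nothing. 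Each $V_{p,j}$ is $D_{A_r,\varepsilon}$--invariant: the diagonal block acts as a scalar on $\omega_j^p$ (degree $p$) and on $\bar{\partial}_k\omega_j^p$ (degree $p+1$), and $(2\varepsilon)^{1/2}(\bar{\partial}_k+\bar{\partial}_k^*)$ sends $\omega_j^p$ to $(2\varepsilon)^{1/2}\bar{\partial}_k\omega_j^p$ and $\bar{\partial}_k\omega_j^p$ back into $\mathbb{C}\,\omega_j^p$ via $\bar{\partial}_k^*\bar{\partial}_k\omega_j^p=\tfrac12\mu^2\omega_j^p$. Since $\norm{\bar{\partial}_k\omega_j^p}^2=\tfrac12\mu^2\norm{\omega_j^p}^2$, the two basis vectors of $V_{p,j}$ have equal norm and $D_{A_r,\varepsilon}|_{V_{p,j}}$ is exactly the symmetric $2\times2$ matrix exhibited above. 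Diagonalizing this matrix and substituting its entries gives the Type $2$ eigenvalues, each with multiplicity $d_\mu^{p,k}$ (multiplicities of coinciding eigenvalues from different blocks add). Running this over all $k$ and combining with the $\mu=0$ case yields the asserted list.

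I expect the only genuine computation to be the invariance--and--matrix step of the third paragraph: keeping the sign $(-1)^p$ from the even/odd spinor grading straight, evaluating $N$ on the two relevant degrees, and fixing the normalization of $\bar{\partial}_k\omega_j^p$ so that the off--diagonal entry of the $2\times2$ matrix comes out as $\mu\varepsilon^{1/2}$. The Hodge identification of $E_0^{p,k}$, the commutation with $\Delta_{\bar{\partial}_k}$, the orthogonality of the $V_{p,j}$, and the multiplicity bookkeeping are all routine once \prettyref{lem:Lemma to compute spectrum} is in hand.
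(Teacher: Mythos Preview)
Your proposal is correct and follows essentially the same route as the paper: commute $D_{A_r,\varepsilon}$ with $\Delta_{\bar{\partial}_k}$ to reduce to each $\bigoplus_p E_\mu^{p,k}$, invoke \prettyref{lem:Lemma to compute spectrum} to split the $\mu>0$ piece into the two--dimensional blocks $\mathrm{span}\{\omega_j^p,\tfrac{\sqrt2}{\mu}\bar{\partial}_k\omega_j^p\}$, and read off the $2\times2$ matrix from \prettyref{eq:Kahler Dirac op. splitting}. Your extra remarks on orthogonality of the $V_{p,j}$, the vanishing $d_\mu^{m,k}=0$, and the normalization of $\bar{\partial}_k\omega_j^p$ are accurate bookkeeping that the paper leaves implicit.
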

The above proposition will allow us to compute an asymptotic formula
for the eta invariant $\bar{\eta}^{r,\varepsilon}=\bar{\eta}(D_{A_{r},\varepsilon})$
as $r\rightarrow\infty$, for each value of the adiabatic parameter
$\varepsilon$. To this end we first compute the spectral flow function
$\textrm{sf}\left\{ D_{A_{s},\varepsilon}\right\} _{0\leq s\leq r}$.
Note that the eigenvalues of type 2 do not change sign when the corresponding
positive eigenvalue of the Hodge Laplacian satisfies
\begin{equation}
\frac{1}{2}\mu^{2}>\frac{\varepsilon}{8}.\label{eq:Type 2 eigvenvalue condition}
\end{equation}
Let $K_{X}^{*}$ be the anticanonical bundle of $X$ and define $R_{ij}^{\mathcal{K}\otimes\mathcal{L}^{\otimes k}\otimes K_{X}^{*}}$
to be the curvature of the connection on $\mathcal{K}\otimes\mathcal{L}^{\otimes k}\otimes K_{X}^{*}$.
Let $\bar{dz_{i}}$ be an orthonormal basis of $T^{0,1^{*}}X$, with
$dz_{i}$ the dual basis of $T^{1,0{}^{*}}X$, and define $\lambda(R^{\mathcal{K}\otimes\mathcal{L}^{\otimes k}\otimes K_{X}^{*}})=\sum_{ij}R_{ij}^{\mathcal{K}\otimes\mathcal{L}^{\otimes k}\otimes K_{X}^{*}}\bar{dz_{i}}\wedge i_{dz_{j}}$.
Then the Bochner-Kodaira-Nakano formula (cf. \cite{Berline-Getzler-Vergne}
Proposition 3.71) asserts the existence of a positive operator $\Delta_{\bar{\partial_{k}}}^{p,0}$
such that
\begin{align*}
\Delta_{\bar{\partial_{k}}}^{p}= & \Delta_{\bar{\partial_{k}}}^{p,0}+\lambda(R^{\mathcal{K}\otimes\mathcal{L}^{\otimes k}\otimes K_{X}^{*}}).
\end{align*}

Now let $\alpha$ be a normalized eigenvector of $\Delta_{\bar{\partial_{k}}}^{p}$with
eigenvalue $\frac{1}{2}\mu^{2}$. We then compute 
\begin{eqnarray}
\frac{1}{2}\mu^{2}=\left\langle \Delta_{\bar{\partial_{k}}}^{p}\alpha,\alpha\right\rangle  & = & \left\langle \Delta_{\bar{\partial_{k}}}^{p,0}\alpha+\lambda(R^{\mathcal{K}\otimes\mathcal{L}^{\otimes k}\otimes K_{X}^{*}})\alpha,\alpha\right\rangle \nonumber \\
 & \geq & k\left\langle \lambda(R^{\mathcal{L}})\alpha,\alpha\right\rangle +\left\langle \lambda(R^{\mathcal{K}\otimes K_{X}^{*}})\alpha,\alpha\right\rangle >\frac{\varepsilon}{8},\label{eq:Lower bound First positive eigenvalue}
\end{eqnarray}
for $p>0$ and $k\gg0$ sufficiently large, via the positivity of
$\mathcal{L}$. In the case where $p=0$, since $\alpha$ is an eigenvector
of $\Delta_{\bar{\partial_{k}}}^{0}$ with positive eigenvalue and
$[\bar{\partial_{k}},\Delta_{\bar{\partial_{k}}}]=0$, we have that
$\bar{\partial_{k}}s$ is nonzero eigenvector of $\Delta_{\bar{\partial_{k}}}^{1}$
with the same eigenvalue. Hence \prettyref{eq:Type 2 eigvenvalue condition}
holds for each positive eigenvalue of $\Delta_{\bar{\partial_{k}}}^{p}$
for all $p$ and $k\gg0$ sufficiently positive. Finally using duality
we have that any positive eigenvalue of $\Delta_{\bar{\partial_{k}}}^{p}$
also obeys \prettyref{eq:Type 2 eigvenvalue condition} for all $p$
and $k\ll0$ sufficiently negative. Hence we see that there are at
most finitely many eigenvalues of $D_{A_{s},\varepsilon}$ of type
2 that change sign as $s$ varies from $0$ to $r$. The contribution
of the eigenvalues of type 1 to spectral flow is computed easily and
we have 
\[
\textrm{sf}\left\{ D_{A_{s},\varepsilon}\right\} _{0\leq s\leq r}=\sum_{0\leq k+\varepsilon(p-\frac{m}{2})\leq r}(-1)^{p+1}\, h^{p.k}+O(1).
\]
By the Kodaira vanishing theorem we have $h^{p,k}=0$ for $p>0$ and
$k\gg0$ sufficiently large. We hence have 
\[
h^{0,k}=\chi(X,\mathcal{K}\otimes\mathcal{L}^{\otimes k})=\int_{X}\textrm{ch}(\mathcal{K}\otimes\mathcal{L}^{\otimes k})\textrm{td}(X)
\]
 by the Hirzebruch-Riemann-Roch theorem. On using $\textrm{ch}(\mathcal{K}\otimes\mathcal{L}^{\otimes k})=\exp\left\{ kc_{1}(\mathcal{L})\right\} \exp\left\{ c_{1}(\mathcal{K})\right\} $
we have
\[
\textrm{sf}\left\{ D_{A_{s},\varepsilon}\right\} _{0\leq s\leq r}=-\sum_{k=0}^{\left[r+\frac{\varepsilon m}{2}\right]}\int_{X}\exp\left\{ kc_{1}(\mathcal{L})\right\} \exp\left\{ c_{1}(\mathcal{K})\right\} \textrm{td}(X)+O(1).
\]
Finally using the Atiyah-Patodi-Singer index theorem we have
\[
\bar{\eta}^{r,\varepsilon}=\bar{\eta}^{0,\varepsilon}+2\left\{ \textrm{sf}\left\{ D_{A_{s},\varepsilon}\right\} _{0\leq s\leq r}+\int_{0}^{r}ds\int_{X}\exp\left\{ sc_{1}(\mathcal{L})\right\} \exp\left\{ c_{1}(\mathcal{K})\right\} td(X)\right\} .
\]
Hence we have proved
\begin{thm}
\label{thm:Eta invrian asmp S1 bundle}The eta invariant $\bar{\eta}^{r,\varepsilon}$
satisfies the asymptotics
\begin{align}
\bar{\eta}^{r,\varepsilon}= & \sum_{a=0}^{m}\left\{ \left(\frac{r^{a+1}}{(a+1)!}-\sum_{k=1}^{\left[r+\frac{\varepsilon m}{2}\right]}\frac{k^{a}}{a!}\right)\int_{X}c_{1}(\mathcal{L})^{a}\left[\textrm{ch}(\mathcal{K})\textrm{td}(X)\right]^{m-a}\right\} +O(1)\label{eq:Asymptotic formula eta S1 bundle}
\end{align}
as $r\rightarrow\infty$.
\end{thm}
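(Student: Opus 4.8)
The plan is to finish off the computation that the preceding paragraphs have set up: all the substantive input is already in place, and what remains is to substitute the spectral flow formula into the Atiyah-Patodi-Singer variation formula for $\bar{\eta}^{r,\varepsilon}$ and expand the characteristic classes in powers of $c_{1}(\mathcal{L})$. First I would record that $\bar{\eta}^{0,\varepsilon}$ is a fixed real number independent of $r$, so it contributes only to the $O(1)$ error. The genuine content has been obtained above: the explicit spectrum of $D_{A_{r},\varepsilon}$ from \prettyref{prop:Computation of Dirac Spectrum}; the fact that, by the Bochner-Kodaira-Nakano estimate together with positivity of $\mathcal{L}$ and Serre duality, only finitely many eigenvalues of type $2$ change sign as $s$ runs from $0$ to $r$; the resulting identity $\textrm{sf}\{D_{A_{s},\varepsilon}\}_{0\le s\le r}=-\sum_{k=0}^{[r+\varepsilon m/2]}\int_{X}\exp\{kc_{1}(\mathcal{L})\}\exp\{c_{1}(\mathcal{K})\}\textrm{td}(X)+O(1)$, using Kodaira vanishing and Hirzebruch-Riemann-Roch; and the APS formula relating $\bar{\eta}^{r,\varepsilon}$, $\bar{\eta}^{0,\varepsilon}$, the spectral flow and the transgression integral $\int_{0}^{r}\!ds\int_{X}\exp\{sc_{1}(\mathcal{L})\}\exp\{c_{1}(\mathcal{K})\}\textrm{td}(X)$.

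The remaining step is purely formal. Writing $\exp\{sc_{1}(\mathcal{L})\}=\sum_{a\ge0}\frac{s^{a}}{a!}c_{1}(\mathcal{L})^{a}$ and using that $\int_{X}$ extracts the component of degree $2m$, the only nonzero contribution comes from pairing $\frac{s^{a}}{a!}c_{1}(\mathcal{L})^{a}$, necessarily with $0\le a\le m$, against the degree-$2(m-a)$ part $[\textrm{ch}(\mathcal{K})\textrm{td}(X)]^{m-a}$; then $\int_{0}^{r}s^{a}\,ds=\frac{r^{a+1}}{a+1}$ gives $\int_{0}^{r}\!ds\int_{X}\exp\{sc_{1}(\mathcal{L})\}\exp\{c_{1}(\mathcal{K})\}\textrm{td}(X)=\sum_{a=0}^{m}\frac{r^{a+1}}{(a+1)!}\int_{X}c_{1}(\mathcal{L})^{a}[\textrm{ch}(\mathcal{K})\textrm{td}(X)]^{m-a}$. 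The identical expansion applied to $\exp\{kc_{1}(\mathcal{L})\}$ inside the spectral-flow sum yields $\sum_{a=0}^{m}\frac{1}{a!}\bigl(\sum_{k=0}^{[r+\varepsilon m/2]}k^{a}\bigr)\int_{X}c_{1}(\mathcal{L})^{a}[\textrm{ch}(\mathcal{K})\textrm{td}(X)]^{m-a}$, and replacing $\sum_{k=0}^{N}k^{a}$ by $\sum_{k=1}^{N}k^{a}$ alters only the $a=0$ summand, i.e.\ by $O(1)$. Collecting the two expansions over the common index $a$ produces exactly the right-hand side of \prettyref{eq:Asymptotic formula eta S1 bundle}, all discrepancies having been absorbed into $O(1)$.

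In this form there is essentially no obstacle left; the real work was upstream, in diagonalizing the Dirac operator and controlling the spectral flow. The point I would be most careful about in the write-up is that the contribution of the type-$2$ eigenvalues to $\textrm{sf}\{D_{A_{s},\varepsilon}\}_{0\le s\le r}$ is genuinely bounded uniformly in $r$: this rests on the bound $\frac{1}{2}\mu^{2}>\varepsilon/8$ on positive eigenvalues of $\Delta_{\bar{\partial}_{k}}^{p}$ failing only for $k$ in a fixed finite window, by positivity of $\mathcal{L}$ for $k\gg0$ and by duality for $k\ll0$, so that $\{D_{A_{s},\varepsilon}\}_{0\le s\le r}$ meets only boundedly many type-$2$ zero crossings. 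The only other thing to watch is the degree bookkeeping, namely that $[\,\cdot\,]^{m-a}$ denotes the degree-$2(m-a)$ part and that the $a$-sum truncates at $m$ automatically since $X$ has real dimension $2m$.
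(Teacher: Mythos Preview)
Your proposal is correct and follows essentially the same route as the paper: the paper derives the spectral flow via \prettyref{prop:Computation of Dirac Spectrum}, the Bochner--Kodaira--Nakano bound, Kodaira vanishing and Hirzebruch--Riemann--Roch, then invokes the Atiyah--Patodi--Singer formula and simply declares ``Hence we have proved'' the theorem. The only thing you add is the explicit expansion of $\exp\{sc_{1}(\mathcal{L})\}$ and $\exp\{kc_{1}(\mathcal{L})\}$ in powers of $c_{1}(\mathcal{L})$ and the term-by-term integration, which the paper leaves to the reader; your care in flagging the uniform boundedness of the type-$2$ spectral flow contribution and the degree bookkeeping is exactly right.
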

The above result shows that the eta invariant is discontinuous of
$O(r^{\frac{n-1}{2}})$ in this example.

\subsection{Computation of the eta invariant}

Although theorem \prettyref{thm:Eta invrian asmp S1 bundle} establishes
an asymptotic formula for the eta invariant it does not provide an
explicit computation for the eta invariant because of the $O(1)$
term in \prettyref{eq:Asymptotic formula eta S1 bundle}. In this
subsection we give an explicit computation of the eta invariant $\bar{\eta}^{r,\varepsilon}$,
assuming the value of the adiabatic parameter $\varepsilon$ to be
sufficiently small, using the adiabatic limit technique. We shall
first compute the $\hat{\eta}$-form of Bismut and Cheeger \cite{Bismut-Cheeger}
for circle bundles. This computation is similar to the one done by
Zhang in \cite{Zhang} with the only difference being the presence
of an extra coupling.

\subsubsection{The $\hat{\eta}$-form}

Let $S^{TS^{1}}$ denote the spin bundle of $TS^{1}$ and $\nabla^{S^{TS^{1}}}$be
the lift of $\nabla^{TS^{1}}$to $S^{TS^{1}}$. Let $\nabla^{S^{TS^{1}},r}=\nabla^{S^{TS^{1}}}\otimes1+1\otimes(d+ire^{*})$
denote the tensor product connection on $S^{TS^{1}}\otimes\mathbb{C}$.
Consider the infinite dimensional bundles over $X$ given by $H_{x}=C^{\infty}(Y_{x},S^{TS^{1}}\otimes\mathbb{C})$
and $G_{x}=C^{\infty}(Y_{x},TY)$. The connection $\nabla^{S^{TS^{1}},r}$
naturally lifts to a connection $\tilde{\nabla}^{S^{TS^{1}},r}$on
$H$. The torsion tensor $T$ may be considered as an element of $T\in\Omega^{2}(X,G)$
and we may define Clifford multiplication by the torsion tensor as
an element of $c(T)\in\Omega^{2}(\mathrm{End}(H))$. The fibrewise
Dirac operator can also be defined as an element of $D^{S^{1},r}=c(e)\nabla_{e}^{S^{TS^{1}},r}\in\mathrm{End}(H)$.
The Bismut superconnection on $H$ is defined via 
\[
A_{u}=\tilde{\nabla}^{S^{TS^{1}},r}+u^{1/2}D^{S^{1},r}-(4u)^{-1/2}c(T).
\]
The $\hat{\eta}$-form is the even form on $X$ defined by 
\begin{equation}
\hat{\eta}=\frac{1}{\sqrt{\pi}}\int_{0}^{\infty}\textrm{tr}^{\textrm{even}}\left[\left(D^{S^{1},r}+(4u)^{-1}c(T)\right)e^{-A_{u}^{2}}\right]\frac{du}{2u^{1/2}}.\label{eq:def eta form}
\end{equation}
Let $z$ be an auxiliary Grassman variable. Since the scalar curvature
of the circle is zero equations \prettyref{eq:e is parallel wrt horizontal connection}
and \prettyref{eq:Difference tensor annihilates e} simplify (4.68)-(4.70)
of \cite{Bismut-Cheeger} to give
\begin{equation}
-u\left(\nabla_{e}^{S^{TS^{1}},r}+\frac{R}{4u}+z\frac{c(e)}{2u^{1/2}}\right)^{2}+irR=A_{u}^{2}-z\left(u^{1/2}D^{S^{1},r}+(4u)^{-1/2}c(T)\right),\label{eq:curvature identity from bismut cheeger}
\end{equation}
where both sides are considered as operators on $\Omega^{*}(X,H)$.
If $\textrm{tr}^{z}(a+zb)=\textrm{tr}(b)$ then the above curvature
identity gives
\begin{multline*}
\textrm{tr}^{\textrm{even}}\left[\left(D^{S^{1},r}+(4u)^{-1}c(T)\right)e^{-A_{u}^{2}}\right]\\
=u^{-1/2}\textrm{tr}^{z}\left[\exp\left\{ u\left(\nabla_{e}^{S^{TS^{1}},r}+\frac{R}{4u}+z\frac{c(e)}{2u^{1/2}}\right)^{2}\right\} \right]\exp\left\{ -irR\right\} .
\end{multline*}

Next, the trivialization given by $e$ for $TS^{1}$ induces once
for $S^{TS^{1}}$. This allows us to identify each fiber
\begin{eqnarray*}
H_{x} & = & C^{\infty}(Y_{x},S^{TS^{1}}\otimes\mathbb{C})\\
 & = & C^{\infty}(Y_{x})=\bigoplus_{k}E_{k}
\end{eqnarray*}
by \prettyref{eq:Fourier decomposition along fibre}. Using $c(e)=-i$,
each $E_{k}$ is seen to be an eigenspace of $D^{S^{1},r}$ with eigenvalue
$-k+r$. Hence $D^{S^{1},r}$ is invertible for $r\notin\mathbb{Z}$.
We then have 
\begin{eqnarray}
 &  & u^{-1/2}\textrm{tr}^{z}\left[\exp\left\{ u\left(\nabla_{e}^{S^{TS^{1}},r}+\frac{R}{4u}+z\frac{c(e)}{2u^{1/2}}\right)^{2}\right\} \right]\\
 & = & u^{-1/2}\textrm{tr}^{z}\left[\sum_{k=-\infty}^{\infty}\exp\left\{ u\left(ik+ir+\frac{R}{4u}-\frac{iz}{2u^{1/2}}\right)^{2}\right\} \right]\\
 & = & -i\sum_{k=-\infty}^{\infty}\left(\frac{\pi}{u}\right)^{3/2}e^{2\pi ki(\frac{R}{4ui}+r)}\cdot k\cdot e^{-\frac{k^{2}\pi^{2}}{u}}
\end{eqnarray}
 where the last equality follows from a Poisson summation formula.
Hence

\begin{eqnarray}
 &  & \frac{1}{\sqrt{\pi}}\int_{0}^{\infty}u^{-1/2}\textrm{tr}^{z}\left[\exp\left\{ u\left(\nabla_{e}^{S^{TS^{1}},r}+\frac{R}{4u}+z\frac{c(e)}{2u^{1/2}}\right)^{2}\right\} \right]\frac{du}{2u^{1/2}}\\
 & = & \pi\int_{0}^{\infty}\sum_{k=1}^{\infty}k\cdot\sin\left(\frac{2\pi k}{u}\cdot\frac{R}{4i}+2\pi kr\right)e^{-\frac{k^{2}\pi^{2}}{u}}\frac{du}{u^{2}}\\
 & = & \sum_{k=1}^{\infty}\frac{k\pi\sin(2\pi kr)+\cos(2\pi kr)\cdot\frac{R}{2i}}{k^{2}\pi^{2}+\left(\frac{R}{2i}\right)^{2}}\\
 & = & \begin{cases}
\frac{1}{2}\left[\frac{\exp\left((1-2\{r\})\frac{R}{2i}\right)}{\sinh\left(\frac{R}{2i}\right)}-\frac{1}{R/2i}\right] & \textrm{if }r\notin\mathbb{Z}\\
\frac{1}{2}\left[\frac{\frac{R}{2i}-\tanh\left(\frac{R}{2i}\right)}{\frac{R}{2i}\tanh\left(\frac{R}{2i}\right)}\right] & \textrm{if }r\in\mathbb{Z}.
\end{cases}\label{eq:def special function for eta form}
\end{eqnarray}
 Here $\{r\}$ denotes the fractional part of $r\notin\mathbb{Z}$.
Let us denote the expression on line \prettyref{eq:def special function for eta form}
by $f\left(\frac{R}{2i},r\right)$. We note that this is a periodic
function in $r$ of period $1$. Hence we finally have that the eta
form is given by

\begin{equation}
\hat{\eta}=f\left(\frac{R}{2i},r\right)\exp\{-irR\}.
\end{equation}

\subsubsection{Adiabatic limit of the eta invariant}

Following the computation of the $\hat{\eta}$-form from the previous
section we now compute the adiabatic limit of the eta invariant. First
assume that the fibrewise Dirac operator $D^{S^{1},r}$ is invertible,
or $r\notin\mathbb{Z}$. The adiabatic limit of the eta invariant
is then given by proposition 4.95 of \cite{Bismut-Cheeger} to be
\begin{eqnarray*}
\lim_{\varepsilon\rightarrow0}\eta^{r,\varepsilon} & = & \frac{1}{(2\pi i)^{m}}\int_{X}\hat{A}(iR^{TX})\hat{\eta}\\
 & = & \int_{X}\hat{A}(X)\, f\left(\frac{c_{1}(\mathcal{L})}{2},r\right)\,\exp\left\{ rc_{1}(\mathcal{L})\right\} .
\end{eqnarray*}

In the case where $r=k\in\mathbb{Z}$ we have that $\textrm{ker}\left(D^{S^{1},r}\right)=E_{k}=\mathcal{L}^{\otimes k}$
forms a vector bundle over the base $X$. Furthermore, it is clear
from \prettyref{prop:Computation of Dirac Spectrum} that the dimension
of the kernel of $D_{A_{r},\varepsilon}$, for $\varepsilon$ small,
is given by 
\begin{align}
\textrm{dim ker}\left(D_{A_{r},\varepsilon}\right)= & \begin{cases}
h^{\frac{m}{2},k} & \textrm{if }m\quad\textrm{even}\\
0 & \textrm{if }m\quad\textrm{odd.}
\end{cases}\label{eq:dim ker in the limit}
\end{align}

Hence by Theorem 0.1 of \cite{Dai} the adiabatic limit of the eta
invariant exists in this case and is given by 
\begin{eqnarray}
\lim_{\varepsilon\rightarrow0}\eta^{r,\varepsilon} & = & \frac{1}{(2\pi i)^{m}}\int_{X}\hat{A}(iR^{TX})\hat{\eta}+\eta(\bar{\partial}_{k}+\bar{\partial}_{k}^{*})+\lim_{\varepsilon\rightarrow0}\sum_{\lambda_{0},\lambda_{1}=0}\textrm{sgn}(\lambda_{\varepsilon}).\label{eq:Dai Adiabatic limit formula}
\end{eqnarray}
Here the third term denotes a sum over the eigenvalues of $D_{A_{r},\varepsilon}$,
which vanish to $O(\varepsilon)$ as $\varepsilon\rightarrow0$, with
the convention $\textrm{sgn}(0)=0$. To compute this term note that
the eigenvalues of type 2 in \prettyref{prop:Computation of Dirac Spectrum}
do not vanish as $\varepsilon\rightarrow0$ for $r\in\mathbb{Z}$.
The eigenvalues of type 1 on the other hand vanish for $k=r$ and
the third is seen to be 
\[
\lim_{\varepsilon\rightarrow0}\sum_{\lambda_{0},\lambda_{1}=0}\textrm{sgn}(\lambda_{\varepsilon})=\sum_{p>\frac{m}{2}}\left(-1\right)^{p}h^{p,k}-\sum_{p<\frac{m}{2}}\left(-1\right)^{p}h^{p,k}
\]
Since the spectrum of $\sqrt{2}(\bar{\partial}_{k}+\bar{\partial}_{k}^{*})$
is symmetric, we have $\eta(\bar{\partial}_{k}+\bar{\partial}_{k}^{*})=0$.
Denoting by $c=c_{1}(\mathcal{L})$, we now sum up the calculation
of the adiabatic limit of the reduced eta invariant in all cases to
be
\[
\lim_{\varepsilon\rightarrow0}\bar{\eta}^{r,\varepsilon}=\begin{cases}
\frac{1}{2}\int_{X}\hat{A}(X)\,\left[\frac{\exp\left((1-2\{r\})\frac{c}{2}\right)}{\sinh\left(\frac{c}{2}\right)}-\frac{1}{c/2}\right]\,\exp\left\{ rc\right\} , & \textrm{if }r\notin\mathbb{Z},\\
\frac{1}{2}\Biggl\{\int_{X}\hat{A}(X)\,\left[\frac{\frac{c}{2}-\tanh\left(\frac{c}{2}\right)}{\frac{c}{2}\tanh\left(\frac{c}{2}\right)}\right]\,\exp\left\{ kc\right\} +h^{\frac{m}{2},k}\\
\qquad+\sum_{p>\frac{m}{2}}\left(-1\right)^{p}h^{p,k}-\sum_{p<\frac{m}{2}}\left(-1\right)^{p}h^{p,k}\Biggr\}, & \textrm{if }r=k\in\mathbb{Z},\; m\;\textrm{even},\\
\frac{1}{2}\Biggl\{\int_{X}\hat{A}(X)\,\left[\frac{\frac{c}{2}-\tanh\left(\frac{c}{2}\right)}{\frac{c}{2}\tanh\left(\frac{c}{2}\right)}\right]\,\exp\left\{ kc\right\} \\
\qquad+\sum_{p>\frac{m}{2}}\left(-1\right)^{p}h^{p,k}-\sum_{p<\frac{m}{2}}\left(-1\right)^{p}h^{p,k}\Biggr\}, & \textrm{if }r=k\in\mathbb{Z},\; m\;\textrm{odd}.
\end{cases}
\]

\subsubsection{Spectral flow function}

To proceed with the computation of the eta invariant $\bar{\eta}^{r,\varepsilon}$
we attempt to compute the spectral flow function $\textrm{sf}\left\{ D_{A_{r},\delta}\right\} _{0\leq\delta\leq\varepsilon}$.
Let $\textrm{Spec}^{+}(A)$ denote the positive spectrum of an operator
$A$ and define 
\[
M=\inf_{k,p}\left\{ \frac{1}{2}\mu^{2}\in\textrm{Spec}^{+}\left(\Delta_{\bar{\partial_{k}}}^{p}\right)\right\} .
\]
By the arguments in subsection \prettyref{sub:The-Kahler-case} we
have $M>0$. Furthermore, if we choose the adiabatic parameter small
enough so that $\frac{\varepsilon}{8}<M$, the eigenvalues of type
2 in \prettyref{prop:Computation of Dirac Spectrum} do not contribute
to spectral flow. The spectral flow from the eigenvalues of type 1
is easily computed to give
\begin{multline*}
\textrm{sf}\left\{ D_{A_{r},\delta}\right\} _{0\leq\delta\leq\varepsilon}=\sum_{p>\frac{m}{2},\textrm{even}}\:\sum_{k=\left\lceil r-\varepsilon\left(p-\frac{m}{2}\right)\right\rceil }^{\left\lceil r\right\rceil -1}\, h^{p,k}-\sum_{p>\frac{m}{2},\textrm{odd}}\:\sum_{k=\left\lfloor r-\varepsilon\left(p-\frac{m}{2}\right)\right\rfloor +1}^{\left\lfloor r\right\rfloor }\, h^{p,k}\\
-\sum_{p<\frac{m}{2},\textrm{even}}\:\sum_{k=\left\lceil r\right\rceil }^{\left\lceil r-\varepsilon\left(p-\frac{m}{2}\right)\right\rceil -1}\, h^{p,k}+\sum_{p<\frac{m}{2},\textrm{odd}}\:\sum_{k=\left\lfloor r\right\rfloor +1}^{\left\lfloor r-\varepsilon\left(p-\frac{m}{2}\right)\right\rfloor }\, h^{p,k}.
\end{multline*}
Here $\left\lfloor x\right\rfloor ,\left\lceil x\right\rceil $ stand
for the floor and ceiling functions of $x$ respectively.

\subsubsection{The transgression form}

Next let $\left\{ \nabla^{\delta}\right\} _{0\leq\delta\leq\varepsilon}$
be any family of connections on $TY$ such that $\nabla^{0}=\nabla^{TY,0},\nabla^{\varepsilon}=\nabla^{TY,\varepsilon}$.
This family determines a connection $\nabla^{TZ}$ on the tangent
bundle $TZ$ of $Z=Y\times[0,\varepsilon]_{\delta}$ via
\[
\nabla^{TZ}=d\delta\wedge\frac{\partial}{\partial\delta}+\nabla^{\delta}.
\]
 Let $R^{TZ}$ be the curvature of $\nabla^{TZ}$. By the Atiyah-Patodi-Singer
index theorem we have 
\begin{equation}
\bar{\eta}^{r,\varepsilon}-\lim_{\varepsilon\rightarrow0}\bar{\eta}^{r,\varepsilon}=2\left\{ \textrm{sf}\left\{ D_{A_{r},\delta}\right\} _{0\leq\delta\leq\varepsilon}+\frac{1}{\left(2\pi i\right)^{m+1}}\int_{Z}\,\hat{A}(R^{TZ})\right\} .\label{eq:Transgression eta invariants}
\end{equation}
Note that this in particular implies that the integral term above
is independent of the chosen family of connections. Here we shall
compute the form $\hat{A}(R^{TZ})$. We choose the natural family
of connections 
\[
\nabla^{\delta}=\nabla^{TY,\delta}=\nabla+\delta p^{H}S+p^{TS^{1}}S
\]
by \prettyref{eq:Projections difference tensor 1},\prettyref{eq:Projections difference tensor 2}.
Denoting $p^{H}S=S^{H},p^{TS^{1}}S=S^{V}$ by shorthands we have
\begin{eqnarray*}
R^{TZ} & = & \left(d\delta\wedge\frac{\partial}{\partial\delta}+\nabla+\delta S^{H}+S^{V}\right)^{2}\\
 & = & d\delta\wedge S^{H}+R^{TX}+\nabla S^{V}+S^{V}\wedge S^{V}\\
 &  & +\delta\left(\nabla S^{H}+S^{H}\wedge S^{V}+S^{V}\wedge S^{H}\right)+\delta^{2}S^{H}\wedge S^{H}.
\end{eqnarray*}

Next we compute using \prettyref{eq:Difference tensor in terms of Torsion tensor}
\begin{eqnarray*}
S^{H}(e)e=0, &  & S^{H}(e)f=Jf,\\
S^{H}(f)e=Jf, &  & S^{H}(f_{1})f_{2}=0,
\end{eqnarray*}
as well as
\begin{eqnarray*}
S^{V}(e)e=0, &  & S^{V}(e)f=0,\\
S^{V}(f)e=0, &  & S^{V}(f_{1})f_{2}=-\omega(f_{1},f_{2})e.
\end{eqnarray*}
where $f_{1},f_{2}\in T^{H}Y=\pi^{*}TX$. We may hence write $S^{H}=e^{*}\otimes J+\alpha_{1}$
where $e^{*}\otimes J,\alpha_{1}\in\Omega^{1}(Y;\mathrm{End}(TY))$
with the only nonzero combination of $\alpha_{1}$ being 
\begin{equation}
\alpha_{1}(f)e=Jf.\label{eq:Alpha 1 formula}
\end{equation}
Following this we may compute in symplectic geodesic coordinates to
obtain $\nabla S^{V}=0$, while computing in holomorphic geodesic
coordinates yields $\nabla S^{H}=de^{*}\otimes J=2\omega\otimes J\in\Omega^{2}(X;\mathrm{End}(TX)).$
Computing further we find $S^{V}\wedge S^{V}=0$. Another computation
gives $S^{H}\wedge S^{V}=\Omega\in\Omega^{2}(X;\mathrm{End}(TX))\subset\Omega^{2}(Y;\mathrm{End}(TY))$
is given by
\begin{equation}
\Omega(f_{1},f_{2})f=\omega(f_{1},f)Jf_{2}-\omega(f_{2},f)Jf_{1}.\label{eq:Novel tensor}
\end{equation}
Also $S^{V}\wedge S^{H}=e^{*}\wedge\alpha_{2}$, where $\alpha_{2}\in\Omega^{1}(Y;\mathrm{End}(TY))$
whose only nonzero combination is 
\begin{equation}
\alpha_{2}(f_{1})f=g(f_{1},f)e.\label{eq:Alpha 2 formula}
\end{equation}
And $S^{H}\wedge S^{H}=e^{*}\wedge\alpha_{3}$ where $\alpha_{3}\in\Omega^{1}(Y;\mathrm{End}(TY))$
whose only nonzero combination is 
\begin{equation}
\alpha_{3}(f_{1})e=-f_{1}.\label{eq:Alpha 3 formula}
\end{equation}

We hence have 
\begin{multline}
R^{TZ}=d\delta\wedge e^{*}\otimes J+d\delta\wedge\alpha_{1}+R^{TX}+2\delta\omega\otimes J\\
+\delta\Omega+\delta e^{*}\wedge\alpha_{2}+\delta^{2}e^{*}\wedge\alpha_{3}\label{eq:Terms in curvature of cylinder}
\end{multline}
and we wish to compute
\begin{eqnarray*}
\hat{A}(R^{TZ}) & = & \exp\left\{ \textrm{tr}p(R^{TZ})\right\} ,\qquad\textrm{where}\\
p(z) & = & \frac{1}{2}\log\left(\frac{z/2}{\sinh\left(z/2\right)}\right).
\end{eqnarray*}
Since $p(z)$ is an even function in $z$ vanishing at zero, it has
a power series 
\begin{align}
p(z)= & p_{2}z^{2}+p_{4}z^{4}+\ldots.\label{eq:power series p}
\end{align}
We shall begin our computation of $\textrm{tr}p(R^{TZ})$ with the
following lemma.
\begin{lem}
\label{lem:Tensor identities}We have the tensor identities
\begin{enumerate}
\item .
\begin{align}
\Omega\wedge\Omega= & 0\label{eq:OwO}
\end{align}

\item .
\begin{align}
\Omega\wedge R^{TX}= & R^{TX}\wedge\Omega=0,\label{eq:OwR=00003DRwO=00003D0}
\end{align}

\item .
\begin{align}
R^{TX}\wedge\alpha_{1}= & 0,\label{eq:RwALH}
\end{align}

\item .
\begin{align}
\textrm{tr}\left[\left(\Omega J\right)^{\wedge k}\right]= & -2^{k}\omega^{\wedge k}.\label{eq:tr (OJ)^k}
\end{align}

\end{enumerate}
\end{lem}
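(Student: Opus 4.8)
The plan is to reduce the four identities to a pointwise computation in a well-chosen frame. Fix $x\in X$ and an orthonormal frame $f_1,\ldots,f_{2m}$ of $T_xX$ adapted to $J$, so that $\omega=\sum_i f_{2i-1}^{*}\wedge f_{2i}^{*}$. Write $J_{ab}=g(Jf_b,f_a)$ for the matrix of $J$ (antisymmetric, with $J^2=-1$), and introduce the auxiliary $1$-forms $\sigma_i:=\sum_c J_{ic}f_c^{*}=-(Jf_i)^{\flat}$, where $\flat$ is the metric identification. Unwinding the defining formula \prettyref{eq:Novel tensor} and using the K\"ahler convention $\omega(u,v)=g(Ju,v)$ gives the compact component formulas
\[
\Omega_{ij}=\sigma_j\wedge\sigma_i,\qquad (\Omega J)_{ij}=f_j^{*}\wedge\sigma_i ,
\]
where $(\cdot)_{ij}$ denotes the matrix entry of an $\mathrm{End}(TX)$-valued form and such forms are multiplied by wedging form parts and composing endomorphism parts; the second formula uses that $J$ is antisymmetric with $J^2=-1$, so $\sum_k J_{kj}\sigma_k=f_j^{*}$. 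In particular $\mathrm{tr}(\Omega J)=\sum_i f_i^{*}\wedge\sigma_i=-2\omega$, which is \prettyref{eq:tr (OJ)^k} for $k=1$.

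Granting these formulas, \prettyref{eq:OwO} and \prettyref{eq:tr (OJ)^k} are pure bookkeeping with signs. For \prettyref{eq:OwO}, $(\Omega\wedge\Omega)_{ij}=\sum_k\sigma_k\wedge\sigma_i\wedge\sigma_j\wedge\sigma_k$; moving the last factor $\sigma_k$ two slots left past the $2$-form $\sigma_i\wedge\sigma_j$ (sign $(-1)^{1\cdot 2}=+1$) brings it adjacent to the first $\sigma_k$, so each summand is $\sigma_k\wedge\sigma_k\wedge\sigma_i\wedge\sigma_j=0$. For \prettyref{eq:tr (OJ)^k}, the same kind of rearrangement gives $(\Omega J)^{\wedge 2}_{ij}=\sigma_i\wedge f_j^{*}\wedge\big(\sum_k f_k^{*}\wedge\sigma_k\big)=-\mathrm{tr}(\Omega J)\wedge(\Omega J)_{ij}=2\omega\wedge(\Omega J)_{ij}$, hence by induction $(\Omega J)^{\wedge k}_{ij}=(2\omega)^{\wedge(k-1)}\wedge(\Omega J)_{ij}$ and $\mathrm{tr}\big[(\Omega J)^{\wedge k}\big]=(2\omega)^{\wedge(k-1)}\wedge\mathrm{tr}(\Omega J)=-2^{k}\omega^{\wedge k}$.

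The identities \prettyref{eq:OwR=00003DRwO=00003D0} and \prettyref{eq:RwALH} are where the curvature enters, and they rest on the algebraic first Bianchi identity in the form
\[
\sum_c f_c^{*}\wedge(R^{TX})_{ck}=0\qquad\text{for every index }k .
\]
This holds because the coefficient of $f_a^{*}\wedge f_b^{*}\wedge f_c^{*}$ in the left-hand side is the total antisymmetrization over $(a,b,c)$ of $\langle R^{TX}(f_a,f_b)f_c,f_k\rangle$, which, using the skew-symmetry of $R^{TX}$ in its last two arguments and in $(a,b)$, equals the first Bianchi cyclic sum and therefore vanishes. With this in hand, $(\Omega\wedge R^{TX})_{ij}=-\sigma_i\wedge\sum_k\sigma_k\wedge(R^{TX})_{kj}$; substituting $\sigma_k=\sum_c J_{kc}f_c^{*}$ and using the K\"ahler relation $JR^{TX}=R^{TX}J$ (componentwise $\sum_k J_{kc}(R^{TX})_{kj}=-\sum_k(R^{TX})_{ck}J_{kj}$) turns $\sum_k\sigma_k\wedge(R^{TX})_{kj}$ into $-\sum_k J_{kj}\big(\sum_c f_c^{*}\wedge(R^{TX})_{ck}\big)=0$, and the computation for $R^{TX}\wedge\Omega$ is the mirror image. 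For \prettyref{eq:RwALH}, note that by \prettyref{eq:Alpha 1 formula}, in a frame $e=e_0,\tilde f_1,\ldots,\tilde f_{2m}$ of $TY$ the only nonzero entries of $\alpha_1$ are $(\alpha_1)_{i0}=\pi^{*}\sigma_i$ ($1\le i\le 2m$); since $R^{TX}$ is horizontal (pulled back from $X$ and zero on $TS^1$), the only possibly nonzero entries of $R^{TX}\wedge\alpha_1$ are $(R^{TX}\wedge\alpha_1)_{i0}=\pi^{*}\big(\sum_k(R^{TX})_{ik}\wedge\sigma_k\big)$, and the inner sum again equals $\sum_k J_{ik}\big(\sum_c(R^{TX})_{kc}\wedge f_c^{*}\big)=0$ by Bianchi and the K\"ahler relation.

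The one genuinely delicate point is establishing the component formulas $\Omega_{ij}=\sigma_j\wedge\sigma_i$ and $(\Omega J)_{ij}=f_j^{*}\wedge\sigma_i$: this demands care with the conventions $\omega(\cdot,J\cdot)=g$, the antisymmetry of $J$, $J^2=-1$, and with the sign rule for multiplying $\mathrm{End}$-valued forms. Once these are in place all four identities follow as above, the only structural inputs being the first Bianchi identity and the K\"ahler condition $R^{TX}J=JR^{TX}$ (equivalently $\nabla^{TX}J=0$).
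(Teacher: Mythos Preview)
Your proof is correct and takes a genuinely different route from the paper. The paper evaluates each identity by plugging in frame vectors $f_1,\ldots,f_{2k}$, expanding the antisymmetrizations as sums over the symmetric group, and then invoking either the relation $\omega(f_i,Jf_j)=g(f_i,f_j)$ (for \prettyref{eq:OwO}) or the first Bianchi identity (for \prettyref{eq:OwR=00003DRwO=00003D0} and \prettyref{eq:RwALH}); for \prettyref{eq:tr (OJ)^k} it performs a direct combinatorial count over permutations and subsets of $\{1,\ldots,k\}$. Your approach instead extracts the algebraic structure at the level of matrix entries: the observation $\Omega_{ij}=\sigma_j\wedge\sigma_i$ exhibits $\Omega$ as having ``rank one'' in the wedge algebra, from which \prettyref{eq:OwO} is immediate ($\sigma_k\wedge\sigma_k=0$) and \prettyref{eq:tr (OJ)^k} follows by a two-line induction rather than a permutation count. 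For \prettyref{eq:OwR=00003DRwO=00003D0} and \prettyref{eq:RwALH} both arguments ultimately rest on the same two inputs---the first Bianchi identity and $[R^{TX},J]=0$---but your matrix formulation packages the Bianchi identity as the single vanishing $\sum_c f_c^{*}\wedge (R^{TX})_{ck}=0$, which is cleaner to reuse. The paper's approach has the virtue of being completely elementary and coordinate-explicit; yours explains \emph{why} the identities hold (decomposability of $\Omega_{ij}$) and scales better, at the cost of requiring more care up front with the component formulas and sign conventions, which you correctly flagged as the delicate step.
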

\begin{proof}
Let $f_{i}$ denote an orthonormal basis of $TX$ at a point. We also
denote by $S_{k}$ the group of permutations of $\left\{ 1,2,\ldots,k\right\} .$
\begin{enumerate}
\item We compute 
\begin{align}
 & \Omega\wedge\Omega(f_{1},f_{2},f_{3},f_{4})f\nonumber \\
= & \frac{1}{4}\sum_{\sigma\in S_{4}}\textrm{sgn}\left(\sigma\right)\Omega\left(f_{\sigma(1)},f_{\sigma(2)}\right)\Omega\left(f_{\sigma(3)},f_{\sigma(4)}\right)f\nonumber \\
= & \frac{1}{4}\sum_{\sigma\in S_{4}}\textrm{sgn}\left(\sigma\right)\Biggl\{\omega\left(f_{\sigma(1)},Jf_{\sigma(4)}\right)\omega\left(f_{\sigma(3)},f\right)Jf_{\sigma(2)}-\omega\left(f_{\sigma(1)},Jf_{\sigma(4)}\right)\omega\left(f_{\sigma(3)},f\right)Jf_{\sigma(2)}\nonumber \\
 & \qquad\qquad-\omega\left(f_{\sigma(1)},Jf_{\sigma(4)}\right)\omega\left(f_{\sigma(3)},f\right)Jf_{\sigma(2)}+\omega\left(f_{\sigma(1)},Jf_{\sigma(4)}\right)\omega\left(f_{\sigma(3)},f\right)Jf_{\sigma(2)}\Biggr\}\label{eq:OwO comp.}\\
= & 0,\nonumber 
\end{align}
since each of the four terms in \prettyref{eq:OwO comp.} contains
an expression of the type $\omega\left(f_{i},Jf_{j}\right)=g^{TX}\left(f_{i},f_{j}\right)=0.$
\item We have $R^{TX}\in\Omega^{2}\left(\mathfrak{so}\left(TX\right)\right)$
and $[R^{TX},J]=0$ since the complex structure $J$ is parallel.
This gives the identity  $\omega\left(f_{1},R^{TX}\left(f_{3},f_{4}\right)f_{4}\right)=g^{TX}\left(R^{TX}\left(f_{3},f_{4}\right)f_{1},Jf_{4}\right)$.
We then compute 
\begin{align*}
 & \Omega\wedge R^{TX}(f_{1},f_{2},f_{3},f_{4})f\\
= & \frac{1}{4}\sum_{\sigma\in S_{4}}\textrm{sgn}\left(\sigma\right)\Omega\left(f_{\sigma(1)},f_{\sigma(2)}\right)R^{TX}\left(f_{\sigma(3)},f_{\sigma(4)}\right)f\\
= & \frac{1}{4}\sum_{\sigma\in S_{4}}\textrm{sgn}\left(\sigma\right)\Biggl\{\omega\left(f_{\sigma(1)},R^{TX}\left(f_{\sigma(3)},f_{\sigma(4)}\right)f\right)Jf_{\sigma(2)}\\
 & \qquad\qquad\qquad\qquad\qquad-\omega\left(f_{\sigma(2)},R^{TX}\left(f_{\sigma(3)},f_{\sigma(4)}\right)f\right)Jf_{\sigma(1)}\Biggr\}\\
= & \frac{1}{4}\sum_{\sigma\in S_{4}}\textrm{sgn}\left(\sigma\right)\Biggl\{ g^{TX}\left(R^{TX}\left(f_{\sigma(3)},f_{\sigma(4)}\right)f_{\sigma(1)},Jf\right)Jf_{\sigma(2)}\\
 & \qquad\qquad\qquad\qquad\qquad-g^{TX}\left(R^{TX}\left(f_{\sigma(3)},f_{\sigma(4)}\right)f_{\sigma(2)},Jf\right)Jf_{\sigma(1)}\Biggr\}\\
= & 0,
\end{align*}
by Bianchi's identity. The computation $R^{TX}\wedge\Omega=0$ is
similar.
\item We compute
\begin{align*}
 & R^{TX}\wedge\alpha_{1}\left(f_{1},f_{2},f_{3}\right)e\\
= & \frac{1}{2}\sum_{\sigma\in S_{3}}\textrm{sgn}\left(\sigma\right)R^{TX}\left(f_{\sigma(1)},f_{\sigma(2)}\right)\alpha_{1}\left(f_{\sigma(3)}\right)e\\
= & \frac{1}{2}\sum_{\sigma\in S_{3}}\textrm{sgn}\left(\sigma\right)R^{TX}\left(f_{\sigma(1)},f_{\sigma(2)}\right)Jf_{\sigma(3)}\\
= & \frac{J}{2}\left\{ \sum_{\sigma\in S_{3}}\textrm{sgn}\left(\sigma\right)R^{TX}\left(f_{\sigma(1)},f_{\sigma(2)}\right)f_{\sigma(3)}\right\} \\
= & 0,
\end{align*}
by Bianchi's identity.
\item For each $1\leq l\leq k,$ define the transposition $\tau_{l}=\left(2l-1\quad2l\right)\in S_{2k}$.
Given a subset $S\subset\left\{ 1,\ldots,k\right\} $, define the
permutation 
\[
\tau_{S}=\prod_{l\in S}\tau_{l}
\]
in $S_{2k}$. Using the definition \prettyref{eq:Novel tensor} of
$\Omega$, we now compute
\begin{align*}
 & \textrm{tr}\left[\left(\Omega J\right)^{\wedge k}\left(f_{1},\ldots,f_{2k}\right)\right]\\
= & \frac{1}{2^{k}}\sum_{\sigma\in S_{2k}}\textrm{sgn}\left(\sigma\right)\textrm{tr}\left[\prod_{l=1}^{k}\Omega\left(f_{\sigma\left(2l-1\right)},f_{\sigma\left(2l\right)}\right)J\right]\\
= & \frac{1}{2^{k}}\sum_{\sigma\in S_{2k}}\textrm{sgn}\left(\sigma\right)\left\{ \sum_{i=1}^{2k}g^{TX}\left(\left[\prod_{l=1}^{k}\Omega\left(f_{\sigma\left(2l-1\right)},f_{\sigma\left(2l\right)}\right)J\right]f_{i},f_{i}\right)\right\} \\
= & \frac{1}{2^{k}}\sum_{\sigma\in S_{2k}}\textrm{sgn}\left(\sigma\right)\left\{ \sum_{S\subset\left\{ 1,\ldots,k\right\} }\textrm{sgn}\left(\tau_{S}\right)\left[\prod_{l=1}^{k-1}\omega\left(f_{\tau_{S}\circ\sigma\left(2l\right)},f_{\tau_{S}\circ\sigma\left(2l+1\right)}\right)\right]\omega\left(f_{\tau_{S}\circ\sigma\left(2l\right)},f_{\tau_{S}\circ\sigma\left(1\right)}\right)\right\} \\
= & \sum_{S\subset\left\{ 1,\ldots,k\right\} }\left\{ \frac{1}{2^{k}}\sum_{\sigma\in S_{2k}}\textrm{sgn}\left(\tau_{S}\circ\sigma\right)\left[\prod_{l=1}^{k-1}\omega\left(f_{\tau_{S}\circ\sigma\left(2l\right)},f_{\tau_{S}\circ\sigma\left(2l+1\right)}\right)\right]\omega\left(f_{\tau_{S}\circ\sigma\left(2l\right)},f_{\tau_{S}\circ\sigma\left(1\right)}\right)\right\} \\
= & -\sum_{S\subset\left\{ 1,\ldots,k\right\} }\omega^{\wedge k}\left(f_{1},\ldots,f_{2k}\right)\\
= & -2^{k}\omega^{\wedge k}\left(f_{1},\ldots,f_{2k}\right).
\end{align*}

\end{enumerate}
\end{proof}
We now perform further computations. Note that since the complex structure
is parallel, the complexification of the Levi-Civita connection $\nabla^{TX}$
preserves the holomorphic and anti-holomorphic tangent spaces $TX^{1,0},TX^{0,1}$.
Let $\nabla^{TX^{1,0}},\nabla^{TX^{0,1}}$ be the restrictions of
$\nabla^{TX}$ to $TX^{1,0},TX^{0,1}$ and let $R^{TX^{1,0}},R^{TX^{0,1}}$
denote their respective curvatures. One then has 
\begin{equation}
\frac{1}{2}\textrm{tr}\left[\left(JR^{TX}\right)^{N}\right]=\textrm{tr}\left[\left(iR^{TX^{1,0}}\right)^{N}\right],\label{eq:tr(JR)^N=00003Dtr(iR)^N}
\end{equation}
where the right hand side is now the trace of a complex linear endomorphism. 

Before stating the next computation, define the sequence $\left\{ \epsilon_{i}\right\} _{i=2}^{\infty}$
of integers via 
\begin{align*}
\epsilon_{N}= & \begin{cases}
1 & \textrm{if }N=2,\\
0 & \textrm{if }N>2.
\end{cases}
\end{align*}

\begin{prop}
Let $N\geq2$ be an even integer. The following identities hold
\begin{enumerate}
\item .
\begin{align}
\textrm{tr}\left[\left(R^{TX}+2\delta\omega\otimes J+\delta\Omega\right)^{N}\right]= & 2\textrm{tr}\left[\left(R^{TX^{1,0}}+2i\delta\omega\right)^{N}\right]+2\left(2i\delta\omega\right)^{N}.\label{eq:tr(R+w+O)^N}
\end{align}

\item .
\begin{align}
\textrm{tr}\left[J\left(R^{TX}+2\delta\omega\otimes J+\delta\Omega\right)^{N-1}\right]= & 2\textrm{tr}\left[i\left(R^{TX^{1,0}}+2i\delta\omega\right)^{N-1}\right]+2i\left(2i\delta\omega\right)^{N-1}+2\epsilon_{N}\omega.\label{eq:tr(R+w+O)^N-1}
\end{align}

\item .
\begin{align}
\textrm{tr}\left[\Omega J\left(R^{TX}+2\delta\omega\otimes J+\delta\Omega\right)^{N-2}\right]= & -2\epsilon_{N}\omega\label{eq:trOJ(R+w+O)^N-2}
\end{align}

\end{enumerate}
\end{prop}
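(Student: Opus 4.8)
The plan is to diagonalize the complex structure and then reduce every trace on the left‑hand sides to the binomial theorem together with the identities of the preceding Lemma. I would work on $TX\otimes\mathbb{C}=TX^{1,0}\oplus TX^{0,1}$. Since $J$ is $\nabla^{TX}$‑parallel, $R^{TX}$ preserves this splitting and restricts to $R^{TX^{1,0}}\oplus R^{TX^{0,1}}$ with $R^{TX^{0,1}}=\overline{R^{TX^{1,0}}}$, while $J$ acts as $i$ on $TX^{1,0}$ and $-i$ on $TX^{0,1}$, and $2\delta\omega\otimes J$ acts as the scalar $2$‑forms $2i\delta\omega$ and $-2i\delta\omega$. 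Hence $R^{TX}$, $2\delta\omega\otimes J$ and $J$ pairwise commute as $\mathrm{End}(TX)$‑valued forms, so the part of each left‑hand side not involving $\Omega$ may be expanded binomially. For the first identity this gives $\mathrm{tr}[(R^{TX}+2\delta\omega\otimes J)^{N}]=\mathrm{tr}_{TX^{1,0}}[(R^{TX^{1,0}}+2i\delta\omega)^{N}]+\mathrm{tr}_{TX^{0,1}}[(R^{TX^{0,1}}-2i\delta\omega)^{N}]$, a sum of two complex conjugate terms; since each Chern--Weil form $\mathrm{tr}[(iR^{TX^{1,0}})^{k}]$ is real and $N$ is even, this is real and equals $2\,\mathrm{tr}[(R^{TX^{1,0}}+2i\delta\omega)^{N}]$. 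The same computation with the extra factor $J$, using in addition the identity $\frac{1}{2}\mathrm{tr}[(JR^{TX})^{N}]=\mathrm{tr}[(iR^{TX^{1,0}})^{N}]$ established above (valid for even $N$, which is our case), produces the $\Omega$‑free part $2\,\mathrm{tr}[i(R^{TX^{1,0}}+2i\delta\omega)^{N-1}]$ of the second identity; in the third identity the factor $\Omega$ in front forces every monomial to contain an $\Omega$, so there is no $\Omega$‑free contribution.

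It then remains to collect the monomials that do contain $\Omega$. Write $R=R^{TX}$, $W=2\delta\omega\otimes J$ and $O=\delta\Omega$. I would first discard every monomial in the expansion of $\mathrm{tr}[(R+W+O)^{N}]$ (and of its two variants) containing both an $R$ and an $O$: since $R$ commutes with $W$ and with $J$ while $R\wedge\Omega=\Omega\wedge R=0$ by the Lemma, any such $R$ can be slid past the intervening $W$'s and $R$'s until it abuts an $O$, whence the monomial vanishes. What survives are the traces of cyclic words in $W$ and $O$ alone (with the prescribed $J$ or $\Omega J$ inserted once), in which no two $O$'s are adjacent because $\Omega\wedge\Omega=0$. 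A surviving word with $k$ copies of $O$ decomposes cyclically as $O\,W^{a_{1}}\,O\,W^{a_{2}}\cdots O\,W^{a_{k}}$ with each $a_{i}\ge1$; when some $a_{i}$ is even the scalar $J^{a_{i}}=\pm\mathrm{id}$ can be factored out, leaving two wedge‑adjacent $\Omega$'s, so that word also vanishes. Thus only words with all gaps $a_{i}$ odd contribute, and for each of these the remaining product of $J^{a_{i}}$'s assembles, up to an explicit power of $2\delta\omega$ and a sign, into $(\Omega J)^{\wedge k}$, whose trace is $-2^{k}\omega^{\wedge k}$ by the Lemma. One checks that every such word contributes the same multiple of $\omega^{\wedge N}$, depending only on $k$ and $N$, and that the number of them is an explicit binomial coefficient in $k$; summing over $k$ collapses, by the binomial theorem, to exactly $2(2i\delta\omega)^{N}$, proving the first identity. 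The second is identical except that the inserted $J$ changes the parity requirement on one gap and shifts the count by one, giving $2i(2i\delta\omega)^{N-1}$; the third reduces, with the $\Omega$ supplied by the front factor, to a sum of the same shape giving $0$ for $N>2$. The leftover $\pm2\epsilon_{N}\omega$ terms are precisely the residue of the single lowest‑degree contribution $\mathrm{tr}[\Omega J]=-2\omega$, which only fails to be absorbed into higher powers of $\omega$ when $N=2$.

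The main obstacle will be the purely combinatorial bookkeeping in the second paragraph: enumerating exactly which cyclic $W$--$O$ words survive, verifying that the all‑odd‑gaps words all carry the same $\omega^{\wedge N}$‑coefficient, and keeping the signs straight --- in particular the dependence on $N$ modulo $4$ coming from $i^{N}$ versus $(-1)^{\sum(a_{i}-1)/2}$, and the clean isolation of the exceptional $N=2$ term. A convenient way to package the needed algebra is to record, equivalently to the Lemma's trace identity, that $\det(1-t\,\Omega J)=(1-2t\omega)^{-1}$, i.e. that $\Omega J$ behaves in the Chern--Weil calculus like a single Chern root $2\omega$ of multiplicity $-1$; once this is in hand the remaining manipulations are formal power‑series identities and only the sign bookkeeping is left to watch. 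All the commutativity and vanishing inputs ($R^{TX}$, $\omega\otimes J$, $J$ pairwise commuting; $\Omega\wedge\Omega=\Omega\wedge R^{TX}=R^{TX}\wedge\Omega=0$) are supplied by the preceding Lemma and by $J$ being parallel, so no new geometric input is required.
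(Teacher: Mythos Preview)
Your approach is correct and essentially identical to the paper's proof: both separate the $\Omega$-free contribution (computed via the $TX^{1,0}\oplus TX^{0,1}$ splitting and the relation $\tfrac12\mathrm{tr}[(JR^{TX})^{N}]=\mathrm{tr}[(iR^{TX^{1,0}})^{N}]$) from the $\Omega$-containing monomials, eliminate $R^{TX}$ from the latter using $[R^{TX},\omega\otimes J]=0$ together with $R^{TX}\wedge\Omega=\Omega\wedge R^{TX}=0$, impose the odd-gap parity constraint coming from $\Omega\wedge\Omega=0$, evaluate each surviving word via $\mathrm{tr}[(\Omega J)^{\wedge k}]=-2^{k}\omega^{\wedge k}$, and collapse the resulting alternating binomial sum. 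The paper carries this out with a linear enumeration $(2\delta\omega J)^{a_{1}}\delta\Omega\cdots\delta\Omega(2\delta\omega J)^{a_{k+1}}$ and explicit counts $2\binom{N/2}{k}$, $\binom{N/2}{k}+\binom{N/2-1}{k}$, $\binom{N/2-1}{k}$ for the three parts, which is exactly the bookkeeping you flag as the obstacle; your determinant repackaging $\det(1-t\,\Omega J)=(1-2t\omega)^{-1}$ is a pleasant variant not in the paper but encodes the same identity.
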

\begin{proof}
(1). The expansion of $\left(R^{TX}+2\delta\omega\otimes J+\delta\Omega\right)^{N}$consists
of monomials in the three tensors $R^{TX},2\delta\omega\otimes J$
and $\delta\Omega$. Using $\left[R^{TX},\omega\otimes J\right]=0$
and \prettyref{eq:OwR=00003DRwO=00003D0} we see that a monomial containing
both $R^{TX}$ as well as $\Omega$ is necessarily zero. Hence 
\begin{align}
\left(R^{TX}+2\delta\omega\otimes J+\delta\Omega\right)^{N}= & \left(R^{TX}+2\delta\omega\otimes J\right)^{N}\label{eq:w & w/o Omega}\\
 & +\left(2\delta\omega\otimes J+\delta\Omega\right)^{N}-\left(2\delta\omega\otimes J\right)^{N}.\nonumber 
\end{align}
The trace of the first summand on the right hand side of \prettyref{eq:w & w/o Omega-1}
is easily computed using \prettyref{eq:tr(JR)^N=00003Dtr(iR)^N} to
be 
\begin{align*}
\textrm{tr}\left[\left(R^{TX}+2\delta\omega\otimes J\right)^{N}\right]= & 2\textrm{tr}\left[\left(R^{TX^{1,0}}+2i\delta\omega\right)^{N}\right].
\end{align*}
Next, for each $a=\left(a_{1},\ldots,a_{k+1}\right)\in\mathbb{N}_{0}^{k+1}$
we denote $|a|=\sum_{i=1}^{k+1}a_{i}.$ Then the sum of the last two
terms in \prettyref{eq:w & w/o Omega-1} is
\begin{multline}
\left(2\delta\omega\otimes J+\delta\Omega\right)^{N}-\left(2\delta\omega\otimes J\right)^{N}=\\
\sum_{k>0}\left\{ \sum_{\begin{subarray}{l}
\;\; a\in\mathbb{N}_{0}^{k+1}\\
|a|=N-k
\end{subarray}}\left(2\delta\omega J\right)^{a_{1}}\delta\Omega\ldots\delta\Omega\left(2\delta\omega J\right)^{a_{k+1}}\right\} .\label{eq:exp terms w w/o Omega}
\end{multline}
 Using identity \prettyref{eq:OwO}, we see that the only non-zero
terms in the sum \prettyref{eq:exp terms w w/o Omega} are ones satisfying
the parity constraint 
\begin{equation}
a_{1}+a_{k+1},a_{2},\ldots,a_{k}\:\textrm{odd}.\label{eq:parity const.}
\end{equation}
Furthermore, using \prettyref{eq:tr (OJ)^k}, we may compute the trace
of each summand in \prettyref{eq:exp terms w w/o Omega} satisfying
\prettyref{eq:parity const.} to be
\begin{align*}
\textrm{tr}\left[\left(2\delta\omega J\right)^{a_{1}}\delta\Omega\ldots\delta\Omega\left(2\delta\omega J\right)^{a_{k+1}}\right]= & -\left(-1\right)^{k}\left(2i\delta\omega\right)^{N}.
\end{align*}
The number of $a\in\mathbb{N}_{0}^{k+1}$ with $|a|=N-k$ and satisfying
\prettyref{eq:parity const.} is easily computed to be $2\binom{\frac{N}{2}}{k}$.
We hence have 
\begin{align*}
 & \textrm{tr}\left[\left(2\delta\omega\otimes J+\delta\Omega\right)^{N}-\left(2\delta\omega\otimes J\right)^{N}\right]\\
= & -\left(2i\delta\omega\right)^{N}\left\{ \sum_{k>0}\left(-1\right)^{k}2\binom{\frac{N}{2}}{k}\right\} \\
= & 2\left(2i\delta\omega\right)^{N}.
\end{align*}

(2). The proof is almost identical to part 1. Again we see that a
monomial in the expansion of $J\left(R^{TX}+2\delta\omega\otimes J+\delta\Omega\right)^{N-1}$
cannot contain both $R^{TX}$ and $\Omega$. Hence 
\begin{align}
J\left(R^{TX}+2\delta\omega\otimes J+\delta\Omega\right)^{N-1}= & J\biggl[\left(R^{TX}+2\delta\omega\otimes J\right)^{N-1}\nonumber \\
 & +\left(2\delta\omega\otimes J+\delta\Omega\right)^{N-1}-\left(2\delta\omega\otimes J\right)^{N-1}\biggr].\label{eq:w & w/o Omega-1}
\end{align}
The trace of the first term on the right hand side above is again
easily computed using \prettyref{eq:tr(JR)^N=00003Dtr(iR)^N} to be
\begin{align*}
\textrm{tr}\left[J\left(R^{TX}+2\delta\omega\otimes J\right)^{N-1}\right]= & 2\textrm{tr}\left[i\left(R^{TX^{1,0}}+2i\delta\omega\right)^{N-1}\right].
\end{align*}
The sum of the last two terms in \prettyref{eq:w & w/o Omega-1} is
now 
\begin{multline}
J\left(2\delta\omega\otimes J+\delta\Omega\right)^{N-1}-J\left(2\delta\omega\otimes J\right)^{N-1}=\\
\sum_{k>0}\left\{ \sum_{\begin{subarray}{l}
\;\; a\in\mathbb{N}_{0}^{k+1}\\
|a|=N-k-1
\end{subarray}}J\left(2\delta\omega J\right)^{a_{1}}\delta\Omega\ldots\delta\Omega\left(2\delta\omega J\right)^{a_{k+1}}\right\} .\label{eq:exp terms with Omega 2}
\end{multline}
Using identity \prettyref{eq:OwO}, we see that the only non-zero
terms in the sum \prettyref{eq:exp terms with Omega 3} are ones satisfying
the parity constraint 
\begin{equation}
a_{1}+a_{k+1}\;\textrm{even},\quad a_{2},\ldots,a_{k}\:\textrm{odd}.\label{eq:parity const. 2}
\end{equation}
Furthermore, using \prettyref{eq:tr (OJ)^k}, we may compute the trace
of each summand in \prettyref{eq:exp terms w w/o Omega} satisfying
\prettyref{eq:parity const. 2} to be
\begin{align*}
\textrm{tr}\left[J\left(2\delta\omega J\right)^{a_{1}}\delta\Omega\ldots\delta\Omega\left(2\delta\omega J\right)^{a_{k+1}}\right]=- & i\left(-1\right)^{k}\left(2i\delta\omega\right)^{N-1}.
\end{align*}
The number of $a\in\mathbb{N}_{0}^{k+1}$ with $|a|=N-1-k$ and satisfying
\prettyref{eq:parity const. 2} is again computed to be $\binom{\frac{N}{2}}{k}+\binom{\frac{N}{2}-1}{k}$.
We hence have 
\begin{align*}
 & \textrm{tr}\left[J\left(2\delta\omega\otimes J+\delta\Omega\right)^{N-1}-J\left(2\delta\omega\otimes J\right)^{N-1}\right]\\
= & -i\left(2i\delta\omega\right)^{N-1}\left\{ \sum_{k>0}\left(-1\right)^{k}\left[\binom{\frac{N}{2}}{k}+\binom{\frac{N}{2}-1}{k}\right]\right\} \\
= & 2i\left(2i\delta\omega\right)^{N-1}+2\epsilon_{N}\omega.
\end{align*}

(3). Since $\Omega J\left(R^{TX}+2\delta\omega\otimes J+\delta\Omega\right)^{N-2}$
already contains $\Omega$, the identity \prettyref{eq:OwO} now implies
\begin{align}
 & \Omega J\left(R^{TX}+2\delta\omega\otimes J+\delta\Omega\right)^{N-2}\nonumber \\
= & \Omega J\left(2\delta\omega\otimes J+\delta\Omega\right)^{N-2}\nonumber \\
= & \Omega J\left(2\delta\omega J\right)^{N-2}+\sum_{k>0}\left\{ \sum_{\begin{subarray}{l}
\;\; a\in\mathbb{N}_{0}^{k+1}\\
|a|=N-k-2
\end{subarray}}\Omega J\left(2\delta\omega J\right)^{a_{1}}\delta\Omega\ldots\delta\Omega\left(2\delta\omega J\right)^{a_{k+1}}\right\} .\label{eq:exp terms with Omega 3}
\end{align}
Using identity \prettyref{eq:OwO}, we see that the only non-zero
terms in the sum \prettyref{eq:exp terms with Omega 3} are ones satisfying
the parity constraint 
\begin{equation}
a_{1}\;\textrm{even},\quad a_{2},\ldots,a_{k+1}\:\textrm{odd}.\label{eq:parity const. 3}
\end{equation}
Furthermore, using \prettyref{eq:tr (OJ)^k}, we may compute the trace
of each summand in \prettyref{eq:exp terms w w/o Omega} satisfying
\prettyref{eq:parity const. 3} to be
\begin{align*}
\textrm{tr}\left[\Omega J\left(2\delta\omega J\right)^{a_{1}}\delta\Omega\ldots\delta\Omega\left(2\delta\omega J\right)^{a_{k+1}}\right]=- & 2\omega\left(-1\right)^{k}\left(2i\delta\omega\right)^{N-2}.
\end{align*}
The number of $a\in\mathbb{N}_{0}^{k+1}$ with $|a|=N-2-k$ and satisfying
\prettyref{eq:parity const. 3} is again computed to be $\binom{\frac{N}{2}-1}{k}$.
We hence have 
\begin{align*}
 & \textrm{tr}\left[\Omega J\left(2\delta\omega\otimes J+\delta\Omega\right)^{N-1}-J\left(2\delta\omega\otimes J\right)^{N-1}\right]\\
= & -2\omega\left(2i\delta\omega\right)^{N-2}-2\left(2i\delta\omega\right)^{N-2}\left\{ \sum_{k>0}\left(-1\right)^{k}2\binom{\frac{N}{2}-1}{k}\right\} \\
= & -2\epsilon_{N}\omega.
\end{align*}
\end{proof}
\begin{prop}
Let $N\geq2$ be an even integer. The following identity holds
\begin{align}
\textrm{tr}\left(R^{TZ}\right)^{N}= & 2\textrm{tr}\left[\left(R^{TX^{1,0}}+2i\delta\omega\right)^{N}\right]+2\left(2i\delta\omega\right)^{N}\nonumber \\
 & +d\delta\wedge e^{*}\,\left\{ 2\textrm{tr}\left[iN\left(R^{TX^{1,0}}+i2\delta\omega\right)^{N-1}\right]+i2N\left(2i\delta\omega\right)^{N-1}\right\} .\label{eq:tr R^N}
\end{align}
\end{prop}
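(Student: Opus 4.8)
The plan is to substitute the explicit expression \prettyref{eq:Terms in curvature of cylinder} for $R^{TZ}$ into $\textrm{tr}(R^{TZ})^{N}$ and to exploit two nilpotency relations, $d\delta\wedge d\delta=0$ and $e^{*}\wedge e^{*}=0$, together with the block structure of the various endomorphisms with respect to the splitting $TY=T^{V}Y\oplus T^{H}Y$, in order to eliminate the overwhelming majority of the terms; the survivors are then traces already evaluated in the preceding proposition and in \prettyref{lem:Tensor identities}.

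First I would separate the summands of \prettyref{eq:Terms in curvature of cylinder} according to whether they contain the one-form $d\delta$: setting $S^{H}=e^{*}\otimes J+\alpha_{1}$ and letting $B=R^{TX}+2\delta\omega\otimes J+\delta\Omega+\delta e^{*}\wedge\alpha_{2}+\delta^{2}e^{*}\wedge\alpha_{3}$ be the part free of $d\delta$, we have $R^{TZ}=B+d\delta\wedge S^{H}$. Since $d\delta\wedge d\delta=0$, every monomial in $(R^{TZ})^{N}$ contains at most one factor $d\delta\wedge S^{H}$; as this factor has even form-degree, cyclicity of the trace collapses the resulting monomials into a single type, so that
\[
\textrm{tr}(R^{TZ})^{N}=\textrm{tr}(B^{N})+N\,d\delta\wedge\textrm{tr}(S^{H}B^{N-1}).
\]

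For the first summand, write $B=A+G$ with $A=R^{TX}+2\delta\omega\otimes J+\delta\Omega$ (an endomorphism preserving $T^{H}Y$ and annihilating $T^{V}Y$) and $G=\delta e^{*}\wedge\alpha_{2}+\delta^{2}e^{*}\wedge\alpha_{3}$. Because $e^{*}\wedge e^{*}=0$, only monomials with at most one $G$ survive, so $\textrm{tr}(B^{N})=\textrm{tr}(A^{N})+N\,\textrm{tr}(GA^{N-1})$; but by \prettyref{eq:Alpha 2 formula}, \prettyref{eq:Alpha 3 formula} the endomorphisms $\alpha_{2},\alpha_{3}$ interchange horizontal and vertical directions, so $\alpha_{3}A^{N-1}=0$ identically and $\alpha_{2}A^{N-1}$ takes values in $T^{V}Y$, whence $\textrm{tr}(GA^{N-1})=0$ and $\textrm{tr}(B^{N})=\textrm{tr}(A^{N})$. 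Part (1) of the preceding proposition, \prettyref{eq:tr(R+w+O)^N}, then supplies the first two terms of \prettyref{eq:tr R^N}. For the second summand, split $S^{H}=e^{*}\otimes J+\alpha_{1}$ and $B=A+G$. The $e^{*}\otimes J$ piece of $S^{H}$ can carry no further $e^{*}$, so it contributes $e^{*}\wedge\textrm{tr}(JA^{N-1})$, and part (2) of the preceding proposition, \prettyref{eq:tr(R+w+O)^N-1}, expresses $\textrm{tr}(JA^{N-1})$ in terms of $R^{TX^{1,0}}$, $\delta\omega$ and an $\Omega$-remainder proportional to $\epsilon_{N}\omega$. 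The $\alpha_{1}$ piece of $S^{H}$, being off-diagonal for the splitting, must be accompanied by exactly one further off-diagonal (i.e. $G$-) factor to contribute to a trace; using $\alpha_{1}\wedge e^{*}=-e^{*}\wedge\alpha_{1}$ and the fact that $A^{k}$ kills $T^{V}Y$ for $k\geq1$, one checks that every placement in which an $A$-factor separates $\alpha_{1}$ from the $\alpha_{2}$ of $G$ vanishes, leaving only the term $-\delta\,e^{*}\wedge\textrm{tr}(\alpha_{1}\alpha_{2}A^{N-2})$; the formulas \prettyref{eq:Alpha 1 formula}, \prettyref{eq:Alpha 2 formula} identify $\alpha_{1}\wedge\alpha_{2}$ with $-\Omega J$ (extended by zero on $T^{V}Y$), so part (3) of the preceding proposition, \prettyref{eq:trOJ(R+w+O)^N-2}, evaluates this term and shows that it cancels exactly the $\epsilon_{N}\omega$-remainder above. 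Collecting the three pieces yields \prettyref{eq:tr R^N}.

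The main obstacle --- and essentially the only non-formal step --- is the off-diagonal bookkeeping for the $\alpha_{1}$-cross terms: deciding which placements of $\alpha_{1}$ and of the $\alpha_{i}$ of $G$ among the $A$'s actually survive, tracking the signs generated by commuting $e^{*}$ past the $\alpha_{i}$, and carrying out the identification $\alpha_{1}\wedge\alpha_{2}=-\Omega J$ together with the matching cancellation against the $\Omega$-remainder of part (2). Everything else is routine multilinear expansion and cyclicity of the trace.
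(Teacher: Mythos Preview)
Your proposal is correct and follows essentially the same route as the paper's own proof. Both arguments exploit the nilpotency $d\delta\wedge d\delta=0$ and $e^{*}\wedge e^{*}=0$ together with the block structure of $\alpha_{1},\alpha_{2},\alpha_{3}$ relative to $TY=T^{V}Y\oplus T^{H}Y$, and both reduce the computation to the three identities \prettyref{eq:tr(R+w+O)^N}, \prettyref{eq:tr(R+w+O)^N-1}, \prettyref{eq:trOJ(R+w+O)^N-2} plus the relation $\alpha_{1}\wedge\alpha_{2}=-\Omega J$, with the $\epsilon_{N}$-terms cancelling between the $J$-piece and the $\alpha_{1}$-piece.

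The only organizational difference is bookkeeping: the paper groups monomials by their total degree ($0$, $1$, or $2$) in the four tensors carrying a $d\delta$ or $e^{*}$, and then for degree $2$ splits into Types A, B, C according to which pair occurs; you instead peel off the $d\delta$-part first to get $\textrm{tr}(B^{N})+N\,d\delta\wedge\textrm{tr}(S^{H}B^{N-1})$, and then handle the $e^{*}$-content inside each summand. Your ordering is arguably a bit more streamlined, but the substance---which terms survive, why the off-diagonal pieces have vanishing trace, and how the surviving pieces are evaluated---is identical. In particular your claim that ``an $A$-factor separating $\alpha_{1}$ from $G$ kills the term'' is exactly the observation $\alpha_{1}\circ A=0$ (since $\alpha_{1}$ annihilates $T^{H}Y$ and $A$ has image in $T^{H}Y$), which is the same mechanism the paper uses to force $d\delta\wedge\alpha_{1}$ and $\delta e^{*}\wedge\alpha_{2}$ to sit adjacently after a cyclic permutation.
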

\begin{proof}
Clearly $\left(R^{TZ}\right)^{N}$ is a sum of monomials in the seven
tensors appearing on the right hand side of \prettyref{eq:Terms in curvature of cylinder}.
Due to the $d\delta$ and $e^{*}$ factors, a nonzero monomial appearing
in $\left(R^{TZ}\right)^{N}$ is of atmost degree two in the four
tensors $d\delta\wedge e^{*}\otimes J,d\delta\wedge\alpha_{1},\delta e^{*}\wedge\alpha_{2}$
and $\delta^{2}e^{*}\wedge\alpha_{3}$. Let $P_{i}$ denote the sum
of monomials of degree $i$ in these four tensors appearing in the
expansion of $\left(R^{TX}\right)^{N}$. Hence 
\begin{align}
\left(R^{TZ}\right)^{N}= & P_{0}+P_{1}+P_{2},\label{eq:R^N=00003DP0+P1+P2}
\end{align}
and we now compute the traces of $P_{0},P_{1}$ and $P_{2}$. 

\textbf{\textsc{Trace of $P_{0}$.}} It is clear that 
\begin{align}
\textrm{tr}P_{0}= & \textrm{tr}\left[\left(R^{TX}+2\delta\omega\otimes J+\delta\Omega\right)^{N}\right]\nonumber \\
= & 2\textrm{tr}\left[\left(R^{TX^{1,0}}+2i\delta\omega\right)^{N}\right]+2\left(2i\delta\omega\right)^{N}\label{eq:tr P0}
\end{align}
by \prettyref{eq:tr(R+w+O)^N}. 

\textbf{\textsc{Trace of $P_{1}$}}\textbf{\textit{.}} A monomial
in $P_{1}$ must contain exactly one occurrence of $d\delta\wedge\alpha_{1},\delta e^{*}\wedge\alpha_{2}$
or $\delta^{2}e^{*}\wedge\alpha_{3}$ and must not contain $d\delta\wedge e^{*}\otimes J$.
From the formulas \prettyref{eq:Alpha 1 formula}-\prettyref{eq:Alpha 3 formula}
for $\alpha_{1},\alpha_{2}$ and $\alpha_{3}$, it is clear that such
a monomial switches the $T^{H}Y$ and $T^{V}Y$ summands. Hence we
have
\begin{equation}
\textrm{tr}\, P_{1}=0.\label{eq:tr P1}
\end{equation}

\textbf{\textsc{Trace of $P_{2}$.}} A nonzero monomial in $P_{2}$
must contain a single appearance of $d\delta$ and $e^{*}$ each.
It can hence be of following three types. 

\textit{Type A.} This type of monomial contains a single appearance
of $d\delta\wedge e^{*}\otimes J$ and no appearances of $d\delta\wedge\alpha_{1},\delta e^{*}\wedge\alpha_{2}$
or $\delta^{2}e^{*}\wedge\alpha_{3}$. Let $P_{2}^{1}$ be the sum
of all monomials of this type appearing in $\left(R^{TX}\right)^{N}$.
Using the cyclicity of the trace we easily see that 
\begin{align}
\textrm{tr}P_{2}^{1}= & N\, d\delta\wedge e^{*}\,\textrm{tr}\left[J\left(R^{TX}+2\delta\omega\otimes J+\delta\Omega\right)^{N-1}\right]\nonumber \\
= & d\delta\wedge e^{*}\,\left\{ 2\textrm{tr}\left[iN\left(R^{TX^{1,0}}+2i\delta\omega\right)^{N-1}\right]+i2N\left(2i\delta\omega\right)^{N-1}+4\epsilon_{N}\omega\right\} ,\label{eq:tr P2^1}
\end{align}
by \prettyref{eq:tr(R+w+O)^N-1}.

\textit{Type B.} This type of monomial contains a single appearance
each of $d\delta\wedge\alpha_{1},\delta e^{*}\wedge\alpha_{2}$ and
no appearances of $d\delta\wedge e^{*}\otimes J$ or $\delta^{2}e^{*}\wedge\alpha_{3}$.
Let $P_{2}^{2}$ be the sum of all monomials of this type appearing
in $\left(R^{TX}\right)^{N}$. From the formulas \prettyref{eq:Alpha 1 formula}
and \prettyref{eq:Alpha 2 formula}, we note that $\alpha_{1}$ maps
$T^{V}Y$ into $T^{H}Y$ while $\alpha_{2}$ maps $T^{H}Y$ into $T^{V}Y$.
Hence in order to have a nonzero trace, a monomial of this type must
be of the form 
\begin{align*}
\delta e^{*}\wedge\alpha_{2}\wedge A\wedge d\delta\wedge\alpha_{1} & \quad\textrm{or}\\
B\wedge d\delta\wedge\alpha_{1}\wedge\delta e^{*}\wedge\alpha_{2}\wedge C,
\end{align*}
where $A,B$ and $C$ are some monomials in the tensors $R^{TX},2\delta\omega\otimes J$
and $\delta\Omega$. Thus we see that in a monomial of this type $d\delta\wedge\alpha_{1},\delta e^{*}\wedge\alpha_{2}$
appear consecutively after a cyclic permutation. Using the cyclicity
of the trace we now have
\begin{align*}
\textrm{tr}P_{2}^{2}= & N\,\textrm{tr}\left[d\delta\wedge\alpha_{1}\wedge\delta e^{*}\wedge\alpha_{2}\wedge\left(R^{TX}+2\delta\omega\otimes J+\delta\Omega\right)^{N-2}\right].
\end{align*}
The identity 
\begin{align*}
\alpha_{1}\wedge\alpha_{2}= & -\Omega J
\end{align*}
combined with \prettyref{eq:trOJ(R+w+O)^N-2} now gives 
\begin{align}
\textrm{tr}P_{2}^{2}= & -4\epsilon_{N}\delta\omega.\label{eq:tr P2^2}
\end{align}

\textit{Type C.} The third type of monomial contains one appearance
each of $d\delta\wedge\alpha_{1}$ and $\delta^{2}e^{*}\wedge\alpha_{3}$
and no appearances of $d\delta\wedge e^{*}\otimes J$ or $\delta e^{*}\wedge\alpha_{2}$.
However since $\alpha_{1}$and $\alpha_{3}$ both annihilate $T^{H}Y$
and map $T^{V}Y$ into $T^{H}Y$ such a monomial must necessarily
have trace zero. 

Adding \prettyref{eq:tr P2^1} and \prettyref{eq:tr P2^2} gives 
\begin{align}
\mbox{tr}P_{2}= & d\delta\wedge e^{*}\,\left\{ 2\textrm{tr}\left[iN\left(R^{TX^{1,0}}+2i\delta\omega\right)^{N-1}\right]+i2N\left(2i\delta\omega\right)^{N-1}\right\} .\label{eq:tr P2}
\end{align}
The proposition now follows from \prettyref{eq:R^N=00003DP0+P1+P2},
\prettyref{eq:tr P0}, \prettyref{eq:tr P1} and \prettyref{eq:tr P2}.
\end{proof}
Finally, substituting \prettyref{eq:tr R^N} into the power series
\prettyref{eq:power series p}, we now have 
\begin{align*}
\textrm{tr}\left\{ p\left(R^{TZ}\right)\right\} = & \Omega_{0}+d\delta\wedge e^{*}\wedge\Omega_{2},\quad\textrm{where}\\
\Omega_{0}= & 2\textrm{tr}\left[p\left(R^{TX^{1,0}}+2i\delta\omega\right)\right]+2p\left(2i\delta\omega\right)\\
\Omega_{2}= & 2\textrm{tr}\left[ip'\left(R^{TX^{1,0}}+i2\delta\omega\right)\right]+i2p'\left(2i\delta\omega\right).
\end{align*}
We may now calculate 
\begin{eqnarray*}
\hat{A}(R^{TZ}) & = & \int_{0}^{\varepsilon}\int_{Y}\exp\left\{ \Omega_{0}+d\delta\wedge e^{*}\wedge\Omega_{2}\right\} \\
 & = & \int_{0}^{\varepsilon}\int_{Y}d\delta\wedge e^{*}\wedge\Omega_{2}\exp\left\{ \Omega_{0}\right\} \\
 & = & \left(2\pi\right)\int_{0}^{\varepsilon}d\delta\int_{X}\Omega_{2}\exp\left\{ \Omega_{0}\right\} .
\end{eqnarray*}
In view of equation \prettyref{eq:Transgression eta invariants},
we now summarize the calculation of the eta invariant. 
\begin{thm}
\label{thm:Eta invariant explicit computation}The eta invariant $\bar{\eta}^{r,\varepsilon}$
for $\frac{\varepsilon}{8}<\inf_{k,p}\left\{ \frac{1}{2}\mu^{2}\in\textrm{Spec}^{+}\left(\Delta_{\bar{\partial_{k}}}^{p}\right)\right\} $
is given by 
\[
\bar{\eta}^{r,\varepsilon}=\lim_{\varepsilon\rightarrow0}\bar{\eta}^{r,\varepsilon}+\textrm{sf}\left\{ D_{A_{r},\delta}\right\} _{0\leq\delta\leq\varepsilon}+\frac{1}{\left(2\pi i\right)^{m+1}}\int_{Z}\,\hat{A}(R^{TZ})
\]
where the three terms above are given by
\begin{enumerate}
\item the adiabatic limit:
\[
\lim_{\varepsilon\rightarrow0}\bar{\eta}^{r,\varepsilon}=\begin{cases}
\frac{1}{2}\int_{X}\hat{A}(X)\,\left[\frac{\exp\left((1-2\{r\})\frac{c}{2}\right)}{\sinh\left(\frac{c}{2}\right)}-\frac{1}{c/2}\right]\,\exp\left\{ rc\right\} , & \textrm{if }r\notin\mathbb{Z},\\
\frac{1}{2}\Biggl\{\int_{X}\hat{A}(X)\,\left[\frac{\frac{c}{2}-\tanh\left(\frac{c}{2}\right)}{\frac{c}{2}\tanh\left(\frac{c}{2}\right)}\right]\,\exp\left\{ kc\right\} +h^{\frac{m}{2},k}\\
\qquad+\sum_{p>\frac{m}{2}}\left(-1\right)^{p}h^{p,k}-\sum_{p<\frac{m}{2}}\left(-1\right)^{p}h^{p,k}\Biggr\}, & \textrm{if }r=k\in\mathbb{Z},\; m\;\textrm{even},\\
\frac{1}{2}\Biggl\{\int_{X}\hat{A}(X)\,\left[\frac{\frac{c}{2}-\tanh\left(\frac{c}{2}\right)}{\frac{c}{2}\tanh\left(\frac{c}{2}\right)}\right]\,\exp\left\{ kc\right\} \\
\qquad+\sum_{p>\frac{m}{2}}\left(-1\right)^{p}h^{p,k}-\sum_{p<\frac{m}{2}}\left(-1\right)^{p}h^{p,k}\Biggr\}, & \textrm{if }r=k\in\mathbb{Z},\; m\;\textrm{odd},
\end{cases}
\]
with $c=c_{1}(\mathcal{L})$.
\item the spectral flow function:
\begin{multline*}
\textrm{sf}\left\{ D_{A_{r},\delta}\right\} _{0\leq\delta\leq\varepsilon}=\sum_{p>\frac{m}{2},\textrm{even}}\:\sum_{k=\left\lceil r-\varepsilon\left(p-\frac{m}{2}\right)\right\rceil }^{\left\lceil r\right\rceil -1}\, h^{p,k}-\sum_{p>\frac{m}{2},\textrm{odd}}\:\sum_{k=\left\lfloor r-\varepsilon\left(p-\frac{m}{2}\right)\right\rfloor +1}^{\left\lfloor r\right\rfloor }\, h^{p,k}\\
-\sum_{p<\frac{m}{2},\textrm{even}}\:\sum_{k=\left\lceil r\right\rceil }^{\left\lceil r-\varepsilon\left(p-\frac{m}{2}\right)\right\rceil -1}\, h^{p,k}+\sum_{p<\frac{m}{2},\textrm{odd}}\:\sum_{k=\left\lfloor r\right\rfloor +1}^{\left\lfloor r-\varepsilon\left(p-\frac{m}{2}\right)\right\rfloor }\, h^{p,k}.
\end{multline*}

\item the transgression form:
\begin{align*}
\int_{Z}\,\hat{A}(R^{TZ})= & \left(2\pi\right)\int_{0}^{\varepsilon}d\delta\int_{X}\Omega_{2}\exp\left\{ \Omega_{0}\right\} ,\quad\textrm{where}\\
\Omega_{0}= & 2\textrm{tr}\left[p\left(R^{TX^{1,0}}+2i\delta\omega\right)\right]+2p\left(2i\delta\omega\right),\\
\Omega_{2}= & 2\textrm{tr}\left[ip'\left(R^{TX^{1,0}}+i2\delta\omega\right)\right]+i2p'\left(2i\delta\omega\right)
\end{align*}
and $p(z)=\frac{1}{2}\log\left(\frac{z/2}{\sinh\left(z/2\right)}\right)$. 
\end{enumerate}
\end{thm}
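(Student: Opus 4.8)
The statement is a synthesis of the three quantities already computed in this subsection, so the plan is simply to substitute those computations into the Atiyah--Patodi--Singer transgression identity \prettyref{eq:Transgression eta invariants}. The only hypothesis that needs to be fed in at the outset is $\frac{\varepsilon}{8}<\inf_{k,p}\{\frac{1}{2}\mu^{2}\in\textrm{Spec}^{+}(\Delta_{\bar{\partial_{k}}}^{p})\}$; by the positivity estimate \prettyref{eq:Lower bound First positive eigenvalue} this is precisely what ensures that the type-2 eigenvalues of \prettyref{prop:Computation of Dirac Spectrum} never cross zero as $\delta$ runs over $[0,\varepsilon]$, so that $\textrm{sf}\{D_{A_{r},\delta}\}_{0\leq\delta\leq\varepsilon}$ equals the finite sum over type-1 eigenvalues recorded above, and (when $r\in\mathbb{Z}$) that the only eigenvalues vanishing as $\delta\to0$ are the type-1 ones at $k=r$.

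First I would establish \prettyref{eq:Transgression eta invariants} itself. Applying the APS index theorem on $Z=Y\times[0,\varepsilon]_{\delta}$ to the suspension of the family $D_{A_{r},\delta}$ identifies the spectral flow with the index of a Dirac operator with APS boundary conditions, whose index formula has interior term $(2\pi i)^{-(m+1)}\int_{Z}\hat{A}(R^{TZ})$ and boundary term the difference of the reduced eta invariants at the two ends. For this to apply one needs a product structure near $\partial Z$; since the transgression integral is independent of the chosen family interpolating $\nabla^{TY,0}$ and $\nabla^{TY,\varepsilon}$ (as remarked after \prettyref{eq:Transgression eta invariants}), one is free to take the family constant near $\delta=0,\varepsilon$, and the existence of the adiabatic limit $\lim_{\varepsilon\to0}\bar{\eta}^{r,\varepsilon}$ appearing at the $\delta=0$ end is exactly what the Bismut--Cheeger computation (for $r\notin\mathbb{Z}$, where $D^{S^{1},r}$ is invertible) and Dai's extension \cite{Dai} (for $r=k\in\mathbb{Z}$, where $\ker D^{S^{1},r}=\mathcal{L}^{\otimes k}$ is a bundle over $X$) provide, together with $\eta(\bar{\partial}_{k}+\bar{\partial}_{k}^{*})=0$ from the symmetry of the Dolbeault spectrum.

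It then remains to insert the three explicit formulas. The adiabatic limit and the spectral-flow sum have been computed verbatim above, so the real content is the evaluation of the transgression form: one starts from the explicit curvature \prettyref{eq:Terms in curvature of cylinder} of $\nabla^{TZ}$, uses the vanishing identities of \prettyref{lem:Tensor identities} ($\Omega\wedge\Omega=0$, $\Omega\wedge R^{TX}=R^{TX}\wedge\alpha_{1}=0$) together with $\textrm{tr}[(\Omega J)^{\wedge k}]=-2^{k}\omega^{\wedge k}$ to kill the mixed monomials in $(R^{TZ})^{N}$, reduces $\textrm{tr}\,p(R^{TZ})$ to $\Omega_{0}+d\delta\wedge e^{*}\wedge\Omega_{2}$ via the preceding two propositions, exponentiates, and finally integrates over the $S^{1}$ fibre (producing the factor $2\pi$) and over $[0,\varepsilon]_{\delta}$. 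Collecting terms in \prettyref{eq:Transgression eta invariants} yields the asserted formula.

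The main obstacle is entirely in this last computation: organizing the expansion of $(R^{TZ})^{N}$ into the parts $P_{0},P_{1},P_{2}$ of degree $0,1,2$ in the four ``mixed'' tensors $d\delta\wedge e^{*}\otimes J$, $d\delta\wedge\alpha_{1}$, $\delta e^{*}\wedge\alpha_{2}$, $\delta^{2}e^{*}\wedge\alpha_{3}$, showing that $P_{1}$ and the Type-C monomials in $P_{2}$ are traceless because $\alpha_{1},\alpha_{2},\alpha_{3}$ interchange the horizontal and vertical summands, and counting the multi-indices $a\in\mathbb{N}_{0}^{k+1}$ subject to the parity constraints forced by $\Omega\wedge\Omega=0$. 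A secondary point requiring care is threading the degenerate case $r\in\mathbb{Z}$ consistently through all three terms — matching the extra $h^{p,k}$ contributions in the adiabatic limit, the ceiling/floor conventions in the spectral flow, and the $O(\varepsilon)$ eigenvalues at $k=r$.
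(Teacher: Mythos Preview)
Your proposal is correct and matches the paper's approach exactly: the theorem is stated as a synthesis of the three computations already carried out in the subsection, assembled via the APS transgression identity \prettyref{eq:Transgression eta invariants}, and you have correctly identified both the role of the hypothesis $\frac{\varepsilon}{8}<M$ (ruling out type-2 crossings so the spectral flow reduces to the type-1 sum) and the fact that the only genuine work lies in the curvature computation for $\hat{A}(R^{TZ})$ via \prettyref{lem:Tensor identities} and the two propositions that follow it.
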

We observe that the formula above expresses the eta invariant in purely
topological terms on the base. 

Finally, we show that our computation agrees with the one of Nicolaescu
from \cite{Nicolaescu-Eta} in dimension three. Consider the case
when $X$ is a oriented Riemann surface. We choose $g^{TX}$ a metric
of volume $\pi l$ where $l$ is a positive integer. Choose the complex
structure $J=-\star$ on $X$, where $\star$ denotes the Hodge star.
This gives a Kahler form $\omega$ satisfying $\int_{X}\omega=-\pi l$.
Let $\mathcal{L}\rightarrow X$ be a Hermitian line bundle of degree
$c_{1}\left(\mathcal{L}\right)=l$. This allows us to pick a connection
on $\mathcal{L}$ with curvature $R=2\omega$, which induces a holomorphic
structure on $\mathcal{L}$. We may now choose $Y$ to be the unit
circle bundle in $\mathcal{L}$ over $X$ equipped with the adiabatic
family of metrics \prettyref{eq:def. adiabatic metrics}. We now specialize
our formula for the eta invariant to compute $\bar{\eta}^{0,\varepsilon}$
in this case. Assuming the adiabatic parameter $\varepsilon$ to be
sufficiently small the spectral flow contribution in \prettyref{thm:Eta invariant explicit computation}
is seen to vanish. Setting $r=0$ the other terms in the formula are
easily computed to give 
\begin{align*}
\bar{\eta}^{0,\varepsilon}= & \frac{c}{12}-\frac{1}{2}\left(h^{1,0}+h^{0,0}\right)+\frac{\varepsilon^{2}l}{12}+\frac{\varepsilon}{12}\int_{X}\textrm{tr}\left[\frac{iR^{TX^{1,0}}}{2\pi}\right].
\end{align*}
Using Serre duality and Gauss-Bonnet we get 
\begin{align}
\bar{\eta}^{0,\varepsilon}= & \frac{c}{12}-h^{0,0}+\frac{\varepsilon^{2}l}{12}-\frac{\varepsilon\chi}{12},\label{eq:Nic. eta formula}
\end{align}
where $\chi$ is the Euler characteristic of the surface. However
the adiabatic metrics in \cite{Nicolaescu-Eta} were chosen to be
of the form $r^{2}g^{TS^{1}}\oplus\frac{1}{l}\pi^{*}g^{TX}$. This
amounts to a rescaling and hence the substitution $\varepsilon=r^{2}l$
in \prettyref{eq:Nic. eta formula}. Following this our formula is
seen to agree in this case with Theorem 2.4 proved, by two different
methods, in \cite{Nicolaescu-Eta}.

\appendix

\section{Estimates on Gaussian integrals \label{sec:Estimates-on-Gaussian}}

Here we prove some estimates on Gaussian integrals used in section
\ref{sec:Asymptotics-of-the}
\begin{lem}
There exist constants $C_{1},C_{2}$ and $C_{3}$ depending only on
the Riemannian manifold $(Y,g)$, such that for any $x,z\in Y$ and
$t,t'>0$ we have the following inequalities
\begin{enumerate}
\item .
\begin{equation}
\int_{Y}h_{t}(x,y)dy\leq C_{1},\label{eq:total integral Gaussian}
\end{equation}

\item . 
\begin{equation}
\int_{Y}h_{t}(x,y)h_{t'}(y,z)dy\leq C_{2}h_{4(t+t')}(x,z)\label{eq:Convolution estimate}
\end{equation}

\item and
\begin{equation}
\int_{0}^{t}s^{-\frac{1}{2}}ds\left(\int_{Y}dyh_{2\left(t-s\right)}(x,y)h_{2s}(y,z)\right)\leq C_{3}t^{\frac{1}{2}}h_{8t}(x,z).\label{eq:Duhamel convolution estimate}
\end{equation}

\end{enumerate}
\end{lem}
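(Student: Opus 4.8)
The plan is to prove the three Gaussian estimates in sequence, each reducing essentially to a computation in $\mathbb{R}^n$ via the comparison between the geodesic distance $\rho(x,y)$ and the Euclidean distance, together with volume comparison for the Riemannian measure $dy$. Throughout I will use that on a compact manifold there is a constant $\kappa\geq 1$ such that the volume density in geodesic normal coordinates is bounded above and below by $\kappa^{\pm 1}$, so integrals against $dy$ are comparable to Lebesgue integrals, at the cost of possibly enlarging the Gaussian's time parameter. The elementary model fact I will invoke repeatedly is the Euclidean Gaussian convolution identity $\int_{\mathbb{R}^n} h_t^{\mathrm{eucl}}(x,y)h_{t'}^{\mathrm{eucl}}(y,z)\,dy = h_{t+t'}^{\mathrm{eucl}}(x,z)$, where $h_t^{\mathrm{eucl}}(x,y)=(4\pi t)^{-n/2}e^{-|x-y|^2/4t}$.

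For \eqref{eq:total integral Gaussian}: fix $x$ and split $Y$ into the geodesic ball $B(x,i_g/2)$ and its complement. On the ball, passing to normal coordinates and using volume comparison, $\int_{B(x,i_g/2)}h_t(x,y)\,dy\leq \kappa\int_{\mathbb{R}^n}h_t^{\mathrm{eucl}}\,du = \kappa$. On the complement, $\rho(x,y)\geq i_g/2$, so $h_t(x,y)\leq (4\pi t)^{-n/2}e^{-i_g^2/16t}$, and since $\sup_{s>0}s^{-n/2}e^{-a/s}<\infty$ for any $a>0$, this is bounded uniformly in $t$; multiplying by $\mathrm{vol}(Y)$ gives a constant. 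Adding the two pieces yields $C_1$.

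For \eqref{eq:Convolution estimate}: the key elementary inequality is $|x-y|^2/t + |y-z|^2/t' \geq |x-z|^2/(t+t')$ (Cauchy–Schwarz / the fact that the minimum over $y$ of the left side is the right side), hence by the triangle inequality for $\rho$, $\rho(x,y)^2/t + \rho(y,z)^2/t' \geq \rho(x,z)^2/(t+t')$; this lets one extract the factor $e^{-\rho(x,z)^2/4(t+t')}$ from the product $h_t(x,y)h_{t'}(y,z)$ and bound the remaining integral $\int_Y (4\pi t)^{-n/2}(4\pi t')^{-n/2} e^{-(\text{nonnegative quadratic})}\,dy$ by comparison with the Euclidean convolution, which produces $(4\pi(t+t'))^{-n/2}$ up to a constant. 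The loss from geodesic-versus-Euclidean distance and volume comparison is absorbed by passing from $t+t'$ to $4(t+t')$ in the final Gaussian; the region far from the diagonal is handled by the same crude bound as before. Then \eqref{eq:Duhamel convolution estimate} follows by applying \eqref{eq:Convolution estimate} with $t\rightsquigarrow 2(t-s)$, $t'\rightsquigarrow 2s$ to get $\int_Y h_{2(t-s)}(x,y)h_{2s}(y,z)\,dy\leq C_2 h_{8t}(x,z)$ uniformly in $s\in(0,t)$, pulling this out of the $s$-integral, and noting $\int_0^t s^{-1/2}\,ds = 2t^{1/2}$.

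The main obstacle is bookkeeping the constants in the geodesic-versus-Euclidean comparison so that the time parameters that appear ($4(t+t')$ in \eqref{eq:Convolution estimate}, $8t$ in \eqref{eq:Duhamel convolution estimate}) are large enough to absorb both the distortion of the distance function near the injectivity radius and the volume density factor, uniformly in $t$ — in particular handling small $t$, where the Gaussians concentrate and the normal-coordinate comparison is sharp, and large $t$, where the Gaussians are nearly flat and one just needs crude $L^\infty$ bounds, by a single unified argument. Once the two regimes are separated by the cutoff at scale $i_g/2$ this is routine.
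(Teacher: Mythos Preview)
Your treatment of parts (1) and (3) is essentially identical to the paper's: split at the injectivity radius for (1), and for (3) apply (2) with $2(t-s),2s$ to pull out $C_2h_{8t}(x,z)$ and integrate $\int_0^t s^{-1/2}\,ds=2t^{1/2}$.

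For (2) you have the right inequality, but the step ``bound the remaining integral by comparison with the Euclidean convolution'' is the one place the sketch is loose. In $\mathbb{R}^n$ the remainder $|x-y|^2/t+|y-z|^2/t'-|x-z|^2/(t+t')$ is an honest quadratic $\tfrac{t+t'}{tt'}|y-y_0|^2$, and its $y$-integral produces exactly the right prefactor. On $Y$ the analogous remainder $\rho(x,y)^2/t+\rho(y,z)^2/t'-\rho(x,z)^2/(t+t')$, while nonnegative, is not a quadratic in any coordinate, $x$ and $z$ need not lie in a common normal chart, and the set where the remainder nearly vanishes can be large; so a direct Euclidean comparison does not go through, and your ``crude bound far from the diagonal'' does not by itself produce the required exponential decay $e^{-\rho(x,z)^2/16(t+t')}$ either.

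The paper sidesteps all of this with a one-line trick: halve the exponent. Write
\[
\frac{\rho(x,y)^2}{4t}+\frac{\rho(y,z)^2}{4t'}=\Bigl(\frac{\rho(x,y)^2}{8t}+\frac{\rho(y,z)^2}{8t'}\Bigr)+\Bigl(\frac{\rho(x,y)^2}{8t}+\frac{\rho(y,z)^2}{8t'}\Bigr),
\]
apply your triangle/Cauchy--Schwarz inequality to the first half to extract $e^{-\rho(x,z)^2/16(t+t')}$, and in the second half (assuming $t\leq t'$) simply drop $e^{-\rho(y,z)^2/8t'}\leq 1$ and bound $\int_Y(4\pi t)^{-n/2}e^{-\rho(x,y)^2/8t}\,dy$ by part~(1). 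The remaining prefactor $(4\pi t')^{-n/2}$ is comparable to $(4\pi(t+t'))^{-n/2}$, and the factor $4$ in $h_{4(t+t')}$ is exactly the cost of the halving. No Euclidean convolution identity is used at all; part~(1) does the integration.
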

\begin{proof}
$\left(1\right).$ Consider the ball $B=\left\{ y|\rho(x,y)<i_{g}\right\} $
and split the integral \ref{eq:total integral Gaussian} into integrals
over \textbf{$B$} and its complement $B^{c}$. Introducing geodesic
coordinates, the integral over $B$ can be bounded from above by the
Euclidean integral $\int_{\mathbb{R}^{n}}\frac{e^{-\frac{|r|^{2}}{4t}}}{(4\pi t)^{\frac{n}{2}}}dr=1.$
For the integral over $B^{c}$, we use the inequality $\frac{e^{-\frac{\rho(x,y)^{2}}{4t}}}{(4\pi t)^{\frac{n}{2}}}\leq\frac{(n/2)!}{\pi^{n/2}i_{g}^{n}}$
to get $\int_{B^{c}}h_{t}(x,y)dy\leq\frac{(n/2)!}{\pi^{n/2}i_{g}^{n}}\textrm{vol}(Y)$.

$\left(2\right).$Without loss of generality assume that $t\leq t'.$
The triangle inequality gives the estimate $\frac{\rho(x,y)^{2}}{t}+\frac{\rho(y,z)^{2}}{t'}\geq\frac{\rho(x,z)^{2}}{2(t+t')}$.
Using this we may bound 
\begin{align*}
\int_{Y} & \frac{e^{-\frac{\rho(x,y)^{2}}{4t}}}{(4\pi t)^{\frac{n}{2}}}\frac{e^{-\frac{\rho(y,z)^{2}}{4t'}}}{(4\pi t')^{\frac{n}{2}}}dy\\
\leq & \frac{e^{-\frac{\rho(x,z)^{2}}{16(t+t')}}}{(4\pi t')^{\frac{n}{2}}}\left(\int_{Y}\frac{e^{-\frac{\rho(x,y)^{2}}{8t}}}{(4\pi t)^{\frac{n}{2}}}e^{-\frac{\rho(y,z)^{2}}{8t'}}dy\right).
\end{align*}
Then via $\frac{1}{t'}\leq\frac{2}{t+t'}$ and $e^{-\frac{\rho(y,z)^{2}}{8t'}}\leq1$
we may further bound this from above by $2^{n}h_{4(t+t')}(x,z)\left(\int_{Y}h_{2t}(x,y)dy\right).$
The estimate \ref{eq:Convolution estimate} now follows from \ref{eq:total integral Gaussian}.

$\left(3\right)$. First use \ref{eq:Convolution estimate} to estimate
\begin{align*}
 & \int_{0}^{t}s^{-\frac{1}{2}}\left(\int_{Y}dyh_{2\left(t-s\right)}(x,y)h_{2s}(y,z)\right)ds\\
\leq & C_{2}\int_{0}^{t}s^{-\frac{1}{2}}h_{8t}(x,z)ds=2C_{2}t^{\frac{1}{2}}h_{8t}(x,z).
\end{align*}

\end{proof}
\textbf{Acknowledgements.} The author would like to thank Prof. J.-M.
Bismut for several discussions and suggesting many key ideas presented
here. The author is also grateful to his thesis advisor Tom Mrowka
under whom this project was started. The author would like to thank
the referees for constructive comments and suggestions to improve
the paper.

\bibliographystyle{siam}
\addcontentsline{toc}{section}{\refname}\bibliography{biblio}

\end{document}